\numberwithin{equation}{section}
\numberwithin{equation}{section}
\newtheorem{theorem}{Theorem}[section]
\newtheorem{thm}{Theorem}[section]
\newtheorem{lemma}[thm]{Lemma}
\newtheorem{proposition}[thm]{Proposition}
\newtheorem{remark}[thm]{Remark}
\newcommand{\LL}{\mathcal{L}}
\newcommand{\B}{\mathcal{B}}
\newcommand{\F}{\mathcal{F}}
\newcommand{\OO}{\mathcal{O}}
\newcommand{\Normal}[1]{\mathcal{N}\BK{ #1 }}
\newcommand{\E}{\mathbb{E}}
\newcommand{\PP}{\mathbb{P}}
\newcommand{\bra}[1]{\langle #1 \rangle}
\newcommand{\BK}[1]{ {\left( #1 \right)} }
\newcommand{\sqBK}[1]{ {\left[ #1 \right]} }
\newcommand{\curBK}[1]{ {\left\{ #1 \right\}} }
\newcommand{\norm}[1]{\left\Vert #1 \right\Vert}
\newcommand{\SB}{\mathrm{S}_{\B}}
\newcommand{\I}{\mathcal{I}}
\newcommand{\II}{\mathbbm{1}}
\newcommand{\ol}{\overline}
\newcommand{\R}{\mathbb{R}}
\newcommand{\Z}{\mathbb{Z}}
\newcommand{\N}{\mathbb{N}}
\newcommand{\Var}{\mathrm{Var}}
\newcommand{\Cesc}{C_{\mathrm{esc}}}
\newcommand{\mtx}[1]{\bm{#1}}
\newcommand{\dtv}{d_{\mathrm{TV}}}
\newcommand{\tbeta}{\tilde{\beta}}
\newcommand{\tbetac}{\tilde{\beta}_c}
\newcommand{\ts}{\tilde{s}}
\newcommand{\const}{\mathrm{const}}
\newcommand{\defby}{:=}
\newcommand{\vertiii}[1]{{\left\vert\kern-0.25ex\left\vert\kern-0.25ex\left\vert #1 
    \right\vert\kern-0.25ex\right\vert\kern-0.25ex\right\vert}}
\newcommand{\GG}{\bm{G}}
\newcommand{\KK}{\bm{K}}
\newcommand{\bmL}{\bm{L}}
\renewcommand{\P}{\bm{P}}
\newcommand{\ppi}{\bm{\pi}}
\newcommand{\mmu}{\bm{\mu}}
\newcommand{\meta}{\bm{\eta}}
\newcommand{\wtilde}{\widetilde}
\newcommand{\tsigma}{\wtilde{\sigma}}
\renewcommand{\phi}{\varphi}
\renewcommand{\epsilon}{\varepsilon}
\begin{document}

\begin{center}
{\Large \textbf{Error Bounds for Sequential Monte Carlo Samplers for Multimodal Distributions}}

\vspace{0.5cm}

BY DANIEL PAULIN, AJAY JASRA \& ALEXANDRE THIERY

{\footnotesize Department of Statistics \& Applied Probability, \\
National University of Singapore, 6 Science Drive 2, Singapore, 117546, SG.}\\
{\footnotesize E-Mail:\,}\texttt{\emph{\footnotesize paulindani@gmail.com, staja@nus.edu.sg, a.h.thiery@nus.edu.sg}}\\

\vspace{0.35cm}
\today

\begin{abstract}
In this paper, we provide bounds on the asymptotic variance for a class of sequential Monte Carlo (SMC) samplers designed for approximating multimodal distributions. 
Such methods combine standard SMC methods and Markov chain Monte Carlo (MCMC) kernels.
Our bounds improve upon previous results, and unlike some earlier work, they  also apply in the case when the MCMC kernels can move between the modes. We apply our results to the Potts model from statistical physics. In this case, the problem of sharp peaks is encountered. Earlier methods, such as parallel tempering, are only able to sample from it at an exponential (in an important parameter of the model) cost. We propose a sequence of interpolating distributions called \emph{interpolation to independence}, and show that the SMC sampler based on it is able to sample from this target distribution at a polynomial cost. We believe that our method is generally applicable to many other distributions as well.
\textbf{Keywords:} Sequential Monte Carlo, Central Limit Theorem, Asymptotic variance bound, Metastability, Potts model, Scale invariance.
\end{abstract}

\end{center}

\section{Introduction}
Sequential Monte Carlo sampling \citep{DDJJRRSB,JNE,NAIS} is a method designed to approximate a sequence of probability distributions $\{\mmu_{k}\}_{0\leq k \leq n}$ defined upon a common measurable space $(E,\mathcal{E})$. The method uses $N\geq 1$ samples (or particles) that are generated in parallel and propagated via importance sampling and resampling methods. 
In the context of this article, we are concerned with the class of algorithms where $\mmu_0$ is an easy to sample distribution and $\mmu_n$ is a complex distribution and $\mmu_1,\dots,\mmu_{n-1}$ interpolate (in some sense) between $\mmu_0$ and $\mmu_n$. In addition, the particles are moved/mutated through Markov kernels of invariant measure $\mmu_k$ at time $k$.
The SMC methodology has proven itself to be a very efficient tools for inference in a wide variety of statistical models and applications including stochastic volatility \citep{JSDTLD}, regression models \citep{CIBIS} and approximate Bayesian computation \citep{DDJABC}.
In this article, we develop theoretical tools for analysing SMC samplers \citep{DDJJRRSB}, a particular class of SMC algorithms, and introduce a new type of interpolating sequences of distributions that enjoys, in many situations, better convergence properties than more standard tempering sequences that are usually used in practice.
The SMC methodology is by now fairly well understood; for example, high-dimensional asymptotic results are obtained in \citep{BCJHD,BCJWHD}, the study of the long-time behaviour is presented in \citep{JDSSMC,WSMCS}
and its performances for exploring multimodal distributions are described in \citep{EberleMartinelliPTRF,schweizer2012non}. For a book-length treatment of the subject, the reader is referred to \citep{del2013mean}. \cite{nonasymptoticDelMoral} has shown concentration inequalities and moment bounds  that take into account global mixing properties of the Markov kernels. These results are formulated in terms of the so-called Dobrushin coefficients, i.e. the contraction rate of the Markov kernels in total variational distance. The authors also generalise their results to stochastic optimization algorithms. The main results in this paper, Theorems \ref{thmuni} and \ref{thmwithmixing1} only bound the asymptotic variance. Theorem \ref{thmuni} is using global mixing properties (spectral gap) of the Markov kernels, while Theorem \ref{thmwithmixing1} is using local mixing properties (thus it is more suited to multimodal distributions). Proving non-asymptotic bounds under similar conditions is an interesting problem for further research.

Multimodal distributions appear in a wide variety of applications in statistics, physics, economics and many more. However, sampling from such distributions is a  challenging problem. In the context of interest, one well known advantage of SMC samplers over traditional Markov Chain Monte Carlo (MCMC) methods is their ability to work relatively well for multimodal distributions. Although this phenomenon is known to practitioners, there have been only very few attempts to rigorously explain and quantify this behaviour. \citep{EberleMartinelliPTRF} and \citep{schweizer2012non} were the first to show error bounds (moment bounds) for SMC samplers when applied to explore multimodal distributions.
These results are extremely interesting from a conceptual perspective; unfortunately, the applications of these results require very stringent assumptions that are rarely met in practical scenarios of interest to practitioners. 
One of the main purposes of this article is to develop widely applicable tools for studying the asymptotic properties of SMC samplers when applied to probe multimodal distributions. To this end, we leverage a metastable approximation to obtain new bounds on the asymptotic variance of the SMC estimates; see  \cite{OlivieriLargedevMetastability} for a comprehensive monograph on metastability. These bounds show that if the time scale it takes for the Markov kernels to approximate a mixture of local equilibrium distributions sufficiently well is polynomial in some size parameter of the system, then the SMC sampler can sample from the multimodal target distribution in polynomial time. In other words, this shows that Markov kernels with good metastability properties can be leveraged to construct SMC samplers  that can explore multimodal target distributions in polynomial time.

We demonstrate the applicability of our results by analysing a model from statistical physics, the Potts model with three colours at critical temperature. Earlier methods, such as parallel tempering, are only able to sample configurations from the Potts model at an exponential cost \citep{BhatnagarRandall} when using standard tempering bridging distributions; this is mainly caused by the appearance of both wide and narrow peaks in the target distribution Indeed, \cite{woodard2009sufficient} have shown that, in general, parallel and simulated tempering using tempering distributions are torpidly mixing for such target distributions. The recent paper \cite{bhatnagar2015simulated} has introduced model specific interpolating distributions for the Potts model called \emph{entropy dampening distributions} and proven (Theorem $7.7$ of \cite{bhatnagar2015simulated}) that the simulated tempering algorithm mixes in polynomial time when using these distributions.

The other main contribution of this article is the introduction of a new general interpolating sequence of distributions, coined \emph{interpolation to independence sequence}. We rigorously prove that an SMC sampler utilizing this newly developed interpolating sequence can generate configurations of the Potts model at a computational cost that only scales cubicly in the system size; this improves improving upon the earlier polynomial rate obtained in \cite{bhatnagar2015simulated}. The \emph{interpolation to independence sequence}  is not model specific; we believe that it has a wide range of potential applications to many systems that display scale invariance properties. 


The paper is organised as follows. In Section \ref{SecPreliminaries}, we introduce the basic tools required, such as Feynman-Kac semigroups. In Section \ref{SecResults}, we state and prove our general asymptotic bounds. In Section \ref{secinterpolationindependence}, we introduce the interpolation to independence sequence of distributions. Section \ref{SecPotts} states our results for the Potts model, and Section \ref{SecProofPotts} contains the proofs of these results.

%
%
\subsection{Notations}
For a function $\phi:E \to \mathbb{R}$, the supremum norm is written as $\|\phi\|_{\infty} = \sup_{x\in E} \, |\phi(x)|$.
Consider a probability measure $\mmu$ on $E$ and $\phi \in L^1(\mmu)$; we repeatedly use the shorthand notation $\mmu(\phi) = \int_E \phi(x) \, \mmu(dx)$ and write $\Var_{\mmu}(\phi)$ for the variance of $\phi$ under $\mmu$. The Hilbert space $L^2(\mmu)$ has scalar product 
\begin{align*}
\bra{f,g}_{\mmu} = \int_E f(x) \, g(x) \, \mmu(dx)
\end{align*}
and associated norm $\| \cdot \|_{L^2(\mmu)}$.
We sometimes identify $\mmu$ with the linear operator from $L^2(\mmu)$ to itself that maps the function $\phi$ to the constant function that equals $\mmu(\phi)$ everywhere. A Markov kernel $\KK$ that lets $\mmu$ invariant is identified with the linear operator $\KK: L^2(\mmu) \to L^2(\mmu)$ 
\begin{align*}
\KK \phi(x) = \int \KK(x, dy) \, \phi(y).
\end{align*}
For an operator $\bmL: L^2(\mmu) \to L^2(\mmu)$, its triple norm  equals
\begin{align*}
\vertiii{ \bmL }_{L^2(\mmu)}
\; = \; 
\sup \curBK{\|\bmL \, \phi\|_{L^2(\mmu)} \; : \; \mmu(\phi^2) \leq 1}.
\end{align*}
Similarly, the quantity $\vertiii{ \bmL }_{\infty}$ equals the supremum of $\|\bmL \phi\|_{\infty}$ over the set of test functions such that $\|\phi\|_{\infty} \leq 1$. The notation $\Normal{m,\sigma^2}$ designates the Gaussian distribution with mean $m$ and variance $\sigma^2$. We use the notation $A \sqcup B$ to denote the union of the disjoint subsets $A,B \subset E$. Finally, for a function $\phi:E \to \R$ and a subset $S \subset E$, the function $\phi_{|S}: S \to \R$ is the restriction of $\phi$ to $S$; for $x \in S$, we have $\phi_{|S}(x) = \phi(x)$.

\section{Preliminaries}\label{SecPreliminaries}
Suppose that we are interested in inference from some distribution $\mmu$ on some Polish state space $(E,\mathcal{E})$. We define an interpolating sequence $\mmu_0, \mmu_1, \dots,\mmu_n$ of distributions with $\mmu_n=\mmu$; the distribution $\mmu_0$ is chosen so that it is straightforward to generate independent samples from it. In this article, we assume that for any index $0\le k\le n-1$ the distribution $\mmu_{k+1}$ is absolutely continuous with respect to $\mmu_k$ and denote by $g_{k,k+1}$ the Radon-Nykodym derivative
\begin{align*}
g_{k,k+1} \; = \; \frac{d \mmu_{k+1}}{d \mmu_k}.
\end{align*}
We work under the standing assumption that these Radon-Nikodym derivatives are bounded and set
\begin{align*}
\Gamma_g 
\; = \; 
\max \curBK{ \| g_{k,k+1} \|_{\infty} \; : \; 0 \leq k \leq n-1 } < \infty.
\end{align*}
We will make extensive use of the linear operator $\GG_{k,k+1}: L^2(\mmu_{k+1}) \to L^2(\mmu_k)$ defined as
\begin{align} \label{eq.operator.G}
\GG_{k,k+1} \phi = g_{k,k+1} \, \phi.
\end{align}
For a test function $\phi:E\to \R$, our goal is to estimate 
the performances of the Sequential Monte Carlo (SMC) algorithm for estimating the expectation $\mmu(\phi)$. Recall that the SMC algorihm with $N$ particles proceeds as follows \cite{DDJJRRSB}. An initial set $\{\xi_{0}^{1},\ldots, \xi_0^{N}\}$ of $N$ i.i.d samples from the probability distribution $\mmu_0$ is generated. The empirical distribution
\begin{align*}
\mmu^N_0 = (1/N) \, \sum_{i=1}^N \bm{\delta}_{\xi_0^i},
\end{align*}
where $\bm{\delta}_a$ denotes the Dirac mass at $a \in E$,
is an approximation of $\mmu_0$. In order to produce a particle approximation of $\mmu$, the algorithm iterates {\it mutation} and {\it resampling} steps. Suppose that a particle approximation  $\mmu^N_k = (1/N) \, \sum_{i=1}^N \bm{\delta}_{\xi_k^i}$ has already been obtained. The mutation steps generates $N$  particles $\{ \tilde{\xi}^1_k, \ldots, \wtilde{\xi}^N_k \}$ distributed as $\wtilde{\xi}^i_k \sim \KK_k(\xi^i_k, dx)$ where $\KK_k(x, dy)$ is a Markov kernel that lets the distribution $\mmu_k$ invariant; given $\{ \xi^1_k, \ldots, \xi^N_k \}$, the particles $\tilde{\xi}^1_k, \ldots, \wtilde{\xi}^N_k$ are independent. The subsequent particle approximation $\mmu^N_{k+1} = (1/N) \, \sum_{i=1}^N \bm{\delta}_{\xi_{k+1}^i}$ is obtained through a multinomial resampling step; the particles $\{ \xi^1_{k+1}, \ldots, \xi^N_{k+1} \}$ are $N$ i.i.d samples from the $\{ \xi^1_{k}, \ldots, \xi^N_{k} \}$-valued random variable that equals $\xi^i_{k}$ with probability $g_{k,k+1}(\xi^i_{k}) / [g_{k,k+1}(\xi^1_{k}) + \ldots + g_{k,k+1}(\xi^N_{k})]$. This procedures can be iterated to produce a sequence of particle approximations $\mmu^N_0, \ldots \mmu^N_n$. %
The output of the SMC algorithm employing $N \geq 1$ particles is an empirical approximation $\mmu^N_n$ to $\mmu_n = \mmu$,
\begin{align*}
\mmu^N_n \; = \; (1/N) \, \sum_{i=1}^N \bm{\delta}_{\xi^i_n}.
\end{align*}
The mutation and resampling steps are also frequently used in genetic optimization algorithms, and some of these algorithms can be analysed in terms of the same Feynman-Kac formulation, we refer the reader to \cite{DelMoralFeynmanKac} for more details. Asymptotic properties of the SMC algorithm are by now well understood (see e.g. \cite{moralmiclo}, and \cite{del2013mean} for a comprehensive overview). For ease of presentation, we will often present our results for functions with mean zero and finite moment of order $(1+\epsilon)$ for some $\epsilon > 0$; in other words, for a probability distribution $\ppi$, we consider the linear subspace
\begin{align*}
L^{2+}_0(\ppi) \defby 
\curBK{\phi: E \to \R \textrm{ such that }\ppi\BK{|\phi|^{2 + \epsilon}}<\infty 
\textrm{ for some }
\epsilon>0
\textrm{ and }
\ppi(\phi)=0 
}.
\end{align*}
Theorem $1$ of \cite{ChopinCLT} implies that for a test function $\phi \in L^{2+}_0(\mmu)$, the following limit holds in distribution,
\begin{align} \label{eq.CLT.statement}
\lim_{N \to \infty} \; 
N^{1/2} \, \sqBK{ \mmu^N_n-\mmu}(\phi)
\; = \; 
\Normal{0, V_n(\phi)},
\end{align}
with asymptotic variance $V_n(\phi)$ that can be expressed as
\begin{equation} \label{eq.asymp.var.expansion}
V_n(\phi) \;= \; 
\sum_{k=0}^n V_{k,n}(\phi)
\, \textrm{with} \,
V_{k,n}(\phi) \defby \norm{ \GG_{k,k+1}  \KK_{k+1}  \ldots \GG_{n-1,n}  \KK_n \, \phi }^2_{L^2(\mmu_k)}
\end{equation}
and $V_{n,n}(\phi) \defby \norm{ f }^2_{L^2(\mmu_k)} = \Var_{\mmu}(\phi)$. We note that the CLT and the expression \eqref{eq.asymp.var.expansion} was first proven for multivariate processes and more general Feynman-Kac models in the case of bounded functions in \cite{moralmiclo}. 
In the next section, we establish bounds on the asymptotic variance $V_n(\phi)$ under various natural conditions.

\section{Bounds on the asymptotic variance}
\label{SecResults}
In this section, we state and prove new asymptotic variance bounds for SMC empirical averages. Section \ref{SecUnimodal} considers bounds under global mixing assumptions. Section \ref{SecNoMixing} considers the multimodal case, under the assumption that there is no mixing between the modes. Finally, in Section \ref{SecWithMixing}, we obtain general results for multimodal distributions. To lighten the notations, for a positive operator $\bm{M}: L^2(\mmu) \to L^2(\mmu)$ and test functions $f,g \in L^2(\mmu)$, we set $\bra{f, g}_{\mmu,\bm{M}} \defby \bra{f, \bm{M} \, g}_{\mu}$ and $\norm{ \phi }^2_{L^2(\mmu),\bm{M}} \defby \bra{\phi, \bm{M} \, \phi}_{\mmu}$. In particular, we have that $\norm{\phi}_{L^2(\mmu_k), \GG_{k,k+1}} = \norm{\phi}_{L^2(\mmu_{k+1})}$, with $\GG_{k,k+1}$ defined as in Equation \eqref{eq.operator.G}.

\subsection{Bound under global mixing assumptions}
\label{SecUnimodal}
The following theorem bounds the asymptotic variance $V_n(\phi)$ in terms of the ``global'' mixing properties of the Markov kernels $\KK_k$ and the size of the relative density $g_{k,k+1}$. 
Before stating our result, we need to introduce some notations.
Recall that $\mmu_k: L^2(\mmu_k) \mapsto L^2(\mmu_k)$ denotes the orthogonal  projection operator that maps a function $\phi$ to the  constant function that equals $\mmu_k(\phi)$ everywhere and that the Markov operator $\KK_k$ lets $\mmu_k$ invariant. We define the quantity 
\begin{align} \label{eq.gamma_K}
\gamma_{\KK} \defby 1 - 
\max \curBK{ \; \vertiii{ \KK_k - \mmu_k }_{L^2(\mmu_k)} 
\; : \; 
1 \leq k \leq n };
\end{align}
For any test function $\phi \in L^2(\mmu_k)$, we thus have that $\norm{(\KK_k - \mmu_k) \phi}_{L^2(\mmu_k)} \leq (1 - \gamma_{\KK}) \, \norm{\phi}_{L^2(\mmu_k)}$.
In the case where the Markov kernels $\KK_k$ are reversible, the quantity $\gamma_{\KK}$ is a uniform lower bound on their absolute spectral gap. The larger $\gamma_{\KK}$, the better the mixing properties of these Markov kernels.
%
%
\begin{theorem}[Variance bound under a global mixing assumption]\label{thmuni}
Let $\phi \in L^{2+}_0(\mmu)$ be a test function. Assume that
\begin{align} \label{eq.Gamma.gamma.unimodal}
\Gamma_g \; < \; \frac{1}{(1-\gamma_{\KK})^2}.
\end{align}
The CLT \eqref{eq.CLT.statement} holds with asymptotic variance $V_n(\phi)$ such that
\begin{align*}
V_n(\phi) 
\; \leq
\frac{1}{1-(1-\gamma_{\KK})^2 \cdot \Gamma_g} \, \Var_{\mmu_n}(\phi).
\end{align*}
\end{theorem}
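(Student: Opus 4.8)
The plan is to bound each term $V_{k,n}(\phi)$ in the expansion \eqref{eq.asymp.var.expansion} and sum a geometric series. Write $\phi$ with $\mmu_n(\phi)=0$ without loss of generality, since the asymptotic variance only depends on $\phi$ through its centered part. The key observation is that the operator $\GG_{k,k+1} \cdot \KK_{k+1} \cdots \GG_{n-1,n} \cdot \KK_n$ appearing in $V_{k,n}$ can be analyzed by peeling off one $\GG \cdot \KK$ block at a time, and that on mean-zero functions the Markov operator $\KK_j$ acts as $\KK_j - \mmu_j$ (since $\mmu_j \phi = \mmu_j(\phi)$ and, crucially, $\KK_j$ preserves the property of being mean-zero under $\mmu_j$ — this requires noting that $\mmu_j$-mean-zero functions are mapped to $\mmu_j$-mean-zero functions because $\KK_j$ is $\mmu_j$-invariant).

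First I would record the two basic norm estimates. For the Markov step: if $\psi \in L^2(\mmu_j)$ has $\mmu_j(\psi) = 0$, then $\norm{\KK_j \psi}_{L^2(\mmu_j)} = \norm{(\KK_j - \mmu_j)\psi}_{L^2(\mmu_j)} \leq (1-\gamma_\KK)\norm{\psi}_{L^2(\mmu_j)}$, and moreover $\KK_j\psi$ is again $\mmu_j$-mean-zero. For the reweighting step: for any $\psi \in L^2(\mmu_{j+1})$,
\begin{align*}
\norm{\GG_{j,j+1}\psi}^2_{L^2(\mmu_j)} = \int g_{j,j+1}^2 \psi^2 \, d\mmu_j \leq \Gamma_g \int g_{j,j+1}\psi^2 \, d\mmu_j = \Gamma_g \norm{\psi}^2_{L^2(\mmu_{j+1})},
\end{align*}
using $\|g_{j,j+1}\|_\infty \leq \Gamma_g$ and the change of measure $d\mmu_{j+1} = g_{j,j+1}\,d\mmu_j$. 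The only subtlety is that $\GG_{j,j+1}$ does not preserve mean-zero-ness (the mean shifts), so I should instead carry an extra projection: write $\GG_{j,j+1}\KK_{j+1}\psi = \mmu_j(\GG_{j,j+1}\KK_{j+1}\psi) + (\text{mean-zero remainder})$, and bound the centered remainder by $\norm{\GG_{j,j+1}\KK_{j+1}\psi}_{L^2(\mmu_j)}$ since subtracting the mean only decreases $L^2$ norm. Applying the two estimates in sequence to the composition defining $V_{k,n}(\phi)$ gives $V_{k,n}(\phi) \leq \big[(1-\gamma_\KK)^2 \Gamma_g\big]^{n-k}\,\Var_{\mmu_n}(\phi)$, where each block contributes one factor $\Gamma_g$ from a $\GG$ and one factor $(1-\gamma_\KK)^2 = (1-\gamma_\KK)^2$ from a $\KK$ acting on a centered function — here $(1-\gamma_\KK)$ appears squared because the norm is squared in the definition of $V_{k,n}$.

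Then I would sum: $V_n(\phi) = \sum_{k=0}^n V_{k,n}(\phi) \leq \Var_{\mmu_n}(\phi)\sum_{j=0}^{\infty}\big[(1-\gamma_\KK)^2\Gamma_g\big]^{j} = \frac{\Var_{\mmu_n}(\phi)}{1-(1-\gamma_\KK)^2\Gamma_g}$, where convergence of the geometric series is exactly the hypothesis \eqref{eq.Gamma.gamma.unimodal}. The CLT statement \eqref{eq.CLT.statement} itself is inherited from Theorem 1 of \cite{ChopinCLT} once $\phi \in L^{2+}_0(\mmu)$, so nothing new is needed there. The main obstacle — really the only place care is needed — is bookkeeping the mean-zero property through the alternating $\GG$ and $\KK$ operators: one must ensure that each time a $\KK_j$ is applied it acts on a genuinely $\mmu_j$-centered function so that the contraction factor $(1-\gamma_\KK)$ is legitimately available, which means inserting and tracking the projections $\mmu_j$ correctly at each stage and verifying that the discarded (non-centered) pieces do not contribute to the relevant norm. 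I expect this to be a short but slightly delicate induction on the number of blocks.
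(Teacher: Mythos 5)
Your proposal follows essentially the same route as the paper's proof: expand $V_n(\phi)=\sum_{k} V_{k,n}(\phi)$, peel off one $\GG\cdot\KK$ block at a time using $\norm{\GG_{j,j+1}\psi}^2_{L^2(\mmu_j)}\le\Gamma_g\,\norm{\psi}^2_{L^2(\mmu_{j+1})}$ together with the contraction $\norm{(\KK_j-\mmu_j)\psi}_{L^2(\mmu_j)}\le(1-\gamma_{\KK})\norm{\psi}_{L^2(\mmu_j)}$ on centered functions, and sum the resulting geometric series under \eqref{eq.Gamma.gamma.unimodal}. The one place where your bookkeeping wobbles is the claim that ``$\GG_{j,j+1}$ does not preserve mean-zero-ness'': in fact $\mmu_j(\GG_{j,j+1}\psi)=\mmu_j(g_{j,j+1}\,\psi)=\mmu_{j+1}(\psi)$, and each $\KK_j$ preserves $\mmu_j$-means by invariance, so the $\mmu_{k+1}$-mean of $\GG_{k+1,k+2}\KK_{k+2}\cdots\GG_{n-1,n}\KK_n\phi$ telescopes down to $\mmu_n(\phi)=0$; every function to which a $\KK_j$ is applied is therefore genuinely centered and your ``extra projection'' is vacuous. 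This is not merely cosmetic: the fallback you describe --- discard the mean and bound only the centered remainder --- would not suffice if the mean were nonzero, because the discarded constant is fixed by $\KK_j$ and re-injected by the next $\GG$, so it would never acquire the factor $(1-\gamma_{\KK})^2$ and the geometric decay would break. Once you replace that worry with the one-line telescoping identity above (which is exactly the step the paper makes explicit), your argument coincides with the published proof.
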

%
%
\begin{proof}
Recall the formula \eqref{eq.asymp.var.expansion} for the asymptotic variance.
First note that $V_{n,n}(\phi)=\Var_{\mmu_n}(\phi)$. By definition of the upper bound $\Gamma_{g}$ and the operators $\GG_{k,k+1}$, it follows that
\begin{align*}
V_{k,n}(\phi) 
&=
\norm{  \GG_{k,k+1}\KK_{k+1}\GG_{k+1,k+2}\KK_{k+2}\cdot\ldots \cdot\GG_{n-1,n}\KK_n \phi}_{L^2(\mmu_k)}^2\\
&\leq \Gamma_g \, 
\norm{  \KK_{k+1}\GG_{k+1,k+2}\KK_{k+2}\cdot \ldots \cdot\GG_{n-1,n}\KK_n \phi}_{L^2(\mmu_k), \GG_{k,k+1}}^2\\
&=
\Gamma_g \, \norm{  \KK_{k+1}\GG_{k+1,k+2}\KK_{k+2}\cdot\ldots \cdot\GG_{n-1,n}\KK_n \phi}_{L^2(\mmu_{k+1})}^2.
\end{align*}
Also, since the Markov kernel $\KK_j$ let $\mmu_j$ invariant and $\mmu(\phi) = 0$, we have that
\begin{align*}
\mmu_{k+1} \GG_{k+1,k+2}\KK_{k+2}\cdot\ldots \cdot\GG_{n-1,n}\KK_n \, \phi(x)  = 0
\end{align*}
for any $x \in E$.
Consequently, the quantity $\norm{  \KK_{k+1}\GG_{k+1,k+2}\KK_{k+2}\cdot\ldots \cdot\GG_{n-1,n}\KK_n \phi}_{L^2(\mmu_{k+1})}^2$ can also be expressed as 
\begin{align} \label{eq.thm.intermediat.1}
\norm{  \BK{\KK_{k+1} - \mmu_{k+1} }\GG_{k+1,k+2}\KK_{k+2} \cdot\ldots \cdot\GG_{n-1,n}\KK_n \phi}_{L^2(\mmu_{k+1})}^2.
\end{align}
The definition \eqref{eq.gamma_K} of $\gamma_{\KK}$ further yields that \eqref{eq.thm.intermediat.1} is less than
\begin{align}
\BK{1 - \gamma_{\KK}}^2 \, \norm{ \GG_{k+1,k+2}\KK_{k+2} \cdot\ldots \cdot\GG_{n-1,n}\KK_n \phi}_{L^2(\mmu_{k+1})}^2
\end{align}
so that $V_{k,n}(\phi) \leq \Gamma_g \, \BK{1 - \gamma_{\KK}}^2 \, \norm{ \GG_{k+1,k+2}\KK_{k+2} \cdot\ldots \cdot\GG_{n-1,n}\KK_n \phi}_{L^2(\mmu_{k+1})}^2$.
Keeping in mind that $\norm{\phi}_{L^2(\mmu_k)}^2 = \Var_{\mmu_k}(\phi)$, iterating the same arguments shows that
\begin{align*}
V_{k,n}(\phi) 
\; \leq \; 
\BK{ \Gamma_g \, \BK{1 - \gamma_{\KK}}^2 }^{n-k} \, \Var_{\mmu}(\phi).
\end{align*}
Since $V_n(\phi) = \sum_{k=0}^{n} V_{k,n}(\phi)$ and $\Gamma_g \, (1-\gamma_{\KK})^{2} < 1$, the conclusion follows.
\end{proof}
Theorem \ref{thmuni} gives an improvement over the quadratic error bounds provided by Theorem $1.2$ of \cite{schweizer2012non}; indeed, contrarily to their result, our bound on the asymptotic variance does not depend on the number $n \geq 1$ of resampling stages. Note that the required assumption \eqref{eq.Gamma.gamma.unimodal} can easily be enforced by including a sufficient number $n \geq 1$ of resampling stages and/or by increasing the amount of MCMC steps at  each stage. It is important to note that Theorem \ref{thmuni} does not assume that the target distribution $\mmu$ is unimodal in any sense; instead, assumptions on the global mixing properties of the Markov kernels $\KK_k$ are leveraged. However, and as is widely acknowledged in the Markov Chain Monte-Carlo literature, it is generally difficult to design Markov kernels with good global mixing properties for multimodal distributions. This is remark is one of main motivations for our work; in the next sections, we describe results that do not require the Markov kernels $\KK_k$ to possess good global mixing properties.

\subsection{Bound for the multimodal case}\label{SecWithMixing}
In this section we examine the case of multiple modes.  Mixing between modes is allowed. We look at partitions of the state space $E$ that may vary with the algorithm index $k$,
\begin{align*}
E \defby \bigsqcup_{j=1}^{m(k)} \; F^{(j)}_k.
\end{align*}
This means that one allows different modes, and potentially a different number of modes, for each intermediate distribution $\mmu_k$. This extra generality allows us to analyse a wider range of interpolating distributions. In particular, we will use this property in the analysis of the Potts model (see Section \ref{SecPotts}).
We define the  \emph{growth-within-mode constant} as
\begin{align} \label{growthwithinmodedefeq}
B_{k,k+1}\defby \max \curBK{ \mmu_{k+1}\left(F^{(r)}_k\right) \, / \, \mmu_{k}\left(F^{(r)}_k\right) \; : \; 1\leq r \leq m(k) }.
\end{align}
The restriction of $\mmu_k$ to $F^{(r)}_k$, denoted by $\mmu_{k,r}$, is defined as
\[\mmu_{k,r}(S)\defby\frac{\mmu_k\left(S\cap F^{(r)}_k\right)}{\mmu_{k}(F^{(r)}_k)} \text{ for every measurable  }S\subset E.\]
Consider the situation where, as is common in practice, the Markov kernel $\KK_k$ is of the form $\KK_k=\bm{P}_k^{t_k}$ for some Markov kernel $\bm{P}_k$; in words, the kernel $\KK_k$ corresponds to iterating $t_k$ steps of the Markov kernel $\bm{P}_k$. We introduce an approximation called \emph{metastable state}, which is a kernel $\widehat{\mmu}_k$ defined as

\begin{align} \label{eq.pi.hat}
\widehat{\mmu}_k(x, \phi)
\; = \;
\sum_{ r = 1 }^{m(k)} \;
\alpha_{k,r}(x) \, \mmu_{k,r}\BK{ \phi_{|F^{(r)}_k} }
\end{align}
where for every $x \in E$ and index $1 \leq k \leq n$ the family $\{\alpha_{k,r}(x)\}_{r=1}^{m(k)}$ is a sequence of non-negative real numbers that are such that
\begin{align} \label{eq.alpha.condition}
\sum_{r=1}^{m(k)} \; \alpha_{k,r}(x)
\; \leq \; 1 
\end{align}
Since $\sum_{r=1}^{m(k)} \alpha_{k,r}(x)$ can be strictly smaller than one, the metastable operator $\widehat{\mmu}_k$ is not necessarily a Markov kernel. 
A natural choice is $\alpha_{k,r}(x) = \KK_k(x,F^{(r)}_k )$ (the probability of ending up in mode $F^{(r)}_k$ when started from $x$). Another possibility, useful when the chain mixes well globally, consists in setting $\alpha_{k,r}(x) = \mmu_k( F^{(r)}_k )$; this approximation results in $\widehat{\mmu}_k(x, dy) = \mmu_k(dy)$.
As will become clear in Section \ref{sec.metastable.approx}, for a suitable choice of coefficients $\alpha_{k,r}(x)$, the approximation $\KK_k\approx \widehat{\mmu}_k$ is often accurate, even for reasonably small values of $t_k$. The following result is our variance bound in this setting.

%
%
\begin{theorem}[Variance bound for multimodal case with mixing]\label{thmwithmixing1}
Assume that $\Gamma_g < \infty$. For a bounded and measurable test function $\phi$, the CLT \eqref{eq.CLT.statement} holds with an asymptotic variance $V_n(\phi) = \sum_{k=0}^{n} V_{k,n}(\phi)$ where, for any index $0 \leq k \leq n-1$, we have
\begin{align}\label{eqVjnwithmixing}
V_{k,n}(\phi)
&\leq
\Gamma_g \,
\prod_{j=k+1}^{n-1}
\curBK{ B_{j,j+1}  + \Gamma_g \, \norm{\KK_{j} - \widehat{\mmu}_j}_\infty } \, \norm{\phi}^2_\infty.
\end{align}
\end{theorem}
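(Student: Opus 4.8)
The plan is to bound each summand $V_{k,n}(\phi)$ of the expansion \eqref{eq.asymp.var.expansion} for $0\le k\le n-1$ (the term $V_{n,n}(\phi)=\Var_{\mmu}(\phi)\le\norm{\phi}^2_\infty$ needs no argument). Fix such a $k$ and introduce the auxiliary functions $\Psi_n\defby\KK_n\phi$ and, recursively, $\Psi_j\defby\KK_j\,\GG_{j,j+1}\,\Psi_{j+1}=\KK_j(g_{j,j+1}\,\Psi_{j+1})$ for $k+1\le j\le n-1$, so that $V_{k,n}(\phi)=\norm{\GG_{k,k+1}\,\Psi_{k+1}}^2_{L^2(\mmu_k)}$. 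First I would peel off the outermost density using $g_{k,k+1}\le\Gamma_g$ and $g_{k,k+1}\,d\mmu_k=d\mmu_{k+1}$:
\begin{align*}
\norm{\GG_{k,k+1}\,\Psi_{k+1}}^2_{L^2(\mmu_k)}
=\int g_{k,k+1}^2\,\Psi_{k+1}^2\,d\mmu_k
\le\Gamma_g\int g_{k,k+1}\,\Psi_{k+1}^2\,d\mmu_k
=\Gamma_g\,\norm{\Psi_{k+1}}^2_{L^2(\mmu_{k+1})},
\end{align*}
which produces the leading factor $\Gamma_g$. It then remains to prove, by downward induction on $j$ from $n$ to $k+1$, that $\norm{\Psi_j}^2_{L^2(\mmu_j)}\le\big(\prod_{i=j}^{n-1}\curBK{B_{i,i+1}+\Gamma_g\,\norm{\KK_i-\widehat{\mmu}_i}_\infty}\big)\,\norm{\phi}^2_\infty$; the base case $j=n$ follows from $\norm{\KK_n\phi}_{L^2(\mmu_n)}\le\norm{\KK_n\phi}_\infty\le\norm{\phi}_\infty$ (a Markov kernel contracts the supremum norm, $\mmu_n$ is a probability measure) and the convention that an empty product equals $1$.

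For the inductive step I would split the Markov operator against its metastable approximation, $\KK_j=\widehat{\mmu}_j+(\KK_j-\widehat{\mmu}_j)$, and treat the two parts of $\Psi_j=\widehat{\mmu}_j(g_{j,j+1}\Psi_{j+1})+(\KK_j-\widehat{\mmu}_j)(g_{j,j+1}\Psi_{j+1})$ with different norms. The metastable part is controlled in $L^2(\mmu_j)$: writing $\widehat{\mmu}_j(g_{j,j+1}\Psi_{j+1})(x)=\sum_r\alpha_{j,r}(x)\,c_r$ with $c_r=\mmu_j(F^{(r)}_j)^{-1}\int_{F^{(r)}_j}\Psi_{j+1}\,d\mmu_{j+1}$, a Cauchy--Schwarz inequality applied to the normalised restriction of $\mmu_{j+1}$ to $F^{(r)}_j$, the convexity bound $(\sum_r\alpha_{j,r}(x)c_r)^2\le\sum_r\alpha_{j,r}(x)c_r^2$ coming from \eqref{eq.alpha.condition}, integration against $\mmu_j$, and the defining bound $\mmu_{j+1}(F^{(r)}_j)/\mmu_j(F^{(r)}_j)\le B_{j,j+1}$ of \eqref{growthwithinmodedefeq} (together with $\int\alpha_{j,r}\,d\mmu_j\le\mmu_j(F^{(r)}_j)$, valid for the natural choices $\alpha_{j,r}(x)=\KK_j(x,F^{(r)}_j)$ and $\alpha_{j,r}(x)=\mmu_j(F^{(r)}_j)$ discussed after \eqref{eq.pi.hat}) combine to give $\norm{\widehat{\mmu}_j(g_{j,j+1}\Psi_{j+1})}^2_{L^2(\mmu_j)}\le B_{j,j+1}\,\norm{\Psi_{j+1}}^2_{L^2(\mmu_{j+1})}$ --- the point being that within a single mode the relative density distorts mass by at most $B_{j,j+1}$. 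The error part is controlled in supremum norm: $\norm{(\KK_j-\widehat{\mmu}_j)(g_{j,j+1}\Psi_{j+1})}_{L^2(\mmu_j)}\le\norm{(\KK_j-\widehat{\mmu}_j)(g_{j,j+1}\Psi_{j+1})}_\infty\le\norm{\KK_j-\widehat{\mmu}_j}_\infty\,\norm{g_{j,j+1}\Psi_{j+1}}_\infty\le\Gamma_g\,\norm{\KK_j-\widehat{\mmu}_j}_\infty\,\norm{\Psi_{j+1}}_\infty$, the factor $\norm{\Psi_{j+1}}_\infty$ being bounded along the same recursion since each $\KK_i$ is a contraction for $\norm{\cdot}_\infty$. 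Combining the two estimates at each step, iterating over $j=k+1,\dots,n-1$, and reinstating the leading $\Gamma_g$ yields \eqref{eqVjnwithmixing}; summing over $0\le k\le n$ gives the stated bound on $V_n(\phi)$, and the CLT \eqref{eq.CLT.statement} itself holds by Theorem $1$ of \cite{ChopinCLT} since a bounded function lies in $L^{2+}(\mmu)$ (applied, as usual, to the centred function).

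I expect the main obstacle to be the bookkeeping in the inductive step: one must combine an $L^2(\mmu_j)$ estimate for the metastable part with a supremum-norm estimate for the error part and still obtain the clean per-step factor $B_{j,j+1}+\Gamma_g\norm{\KK_j-\widehat{\mmu}_j}_\infty$ --- so that the \emph{product}, rather than its square or an expression cluttered by cross terms, appears --- which forces care about which norm is propagated through the recursion, about the precise constants, and about the contraction properties of $\KK_j-\widehat{\mmu}_j$ on $L^2(\mmu_j)$. This is exactly where the ``metastable'' structure enters: the fact that, for $t_j$ large enough, $\KK_j$ is close in operator norm to the mode-respecting operator $\widehat{\mmu}_j$ is what makes the error part negligible and lets the favourable growth-within-mode constants $B_{j,j+1}$ govern the bound. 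A minor auxiliary point is the inequality $\norm{(\KK_j-\widehat{\mmu}_j)f}_\infty\le\norm{\KK_j-\widehat{\mmu}_j}_\infty\norm{f}_\infty$, immediate from the definition of this operator norm.
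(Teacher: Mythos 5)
Your opening reduction to $\Gamma_g\,\norm{\KK_{k+1}\GG_{k+1,k+2}\cdots\GG_{n-1,n}\KK_n\phi}^2_{L^2(\mmu_{k+1})}$ matches the paper, but the $L^2$ recursion you then set up does not close, and the obstacle you flag at the end is a genuine gap rather than bookkeeping. Two distinct problems. First, your two per-step estimates live in different norms, so the only way to recombine them is the triangle inequality in $L^2(\mmu_j)$, which gives $\norm{\Psi_j}_{L^2(\mmu_j)}\le\sqrt{B_{j,j+1}}\,\norm{\Psi_{j+1}}_{L^2(\mmu_{j+1})}+\Gamma_g\vertiii{\KK_j-\widehat{\mmu}_j}_\infty\,\norm{\Psi_{j+1}}_\infty$; squaring yields a per-step factor of the form $\bigl(\sqrt{B_{j,j+1}}+\Gamma_g\vertiii{\KK_j-\widehat{\mmu}_j}_\infty\bigr)^2$, which carries an extra cross term $2\sqrt{B_{j,j+1}}\,\Gamma_g\vertiii{\KK_j-\widehat{\mmu}_j}_\infty$ and is strictly larger than the additive factor $B_{j,j+1}+\Gamma_g\vertiii{\KK_j-\widehat{\mmu}_j}_\infty$ claimed in \eqref{eqVjnwithmixing} (note $B_{j,j+1}\ge 1$). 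Second, the quantity $\norm{\Psi_{j+1}}_\infty$ feeding your error term is not controlled by ``each $\KK_i$ contracts the sup norm'': $\Psi_{j+1}=\KK_{j+1}(g_{j+1,j+2}\Psi_{j+2})$, and the density can multiply the sup norm by $\Gamma_g$ at every stage, so the naive recursion gives $\norm{\Psi_{j+1}}_\infty\le\Gamma_g^{\,n-j-1}\norm{\phi}_\infty$; to avoid this blow-up you would have to run the paper's sup-norm estimate anyway. (A third, smaller issue: your $L^2$ bound for the metastable part uses $\int\alpha_{j,r}\,d\mmu_j\le\mmu_j(F^{(r)}_j)$, which is not implied by the only hypothesis \eqref{eq.alpha.condition} the theorem places on the $\alpha_{j,r}$.)

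The paper's proof avoids all of this by never propagating an $L^2$ norm. After the initial $\Gamma_g$ step it interpolates once, $\norm{f}^2_{L^2(\mmu_{k+1})}\le\norm{f}_\infty\,\norm{f}_{L^1(\mmu_{k+1})}$ with $f=\KK_{k+1}\GG_{k+1,k+2}\cdots\KK_n\phi$. The $L^1$ factor telescopes to $\norm{\phi}_{L^1(\mmu)}\le\norm{\phi}_\infty$ for free, since each $\KK_j$ is an $L^1(\mmu_j)$ contraction and $\norm{\GG_{j,j+1}h}_{L^1(\mmu_j)}=\norm{h}_{L^1(\mmu_{j+1})}$. The entire product then falls on the sup-norm factor via $\vertiii{\KK_j\GG_{j,j+1}}_\infty\le\vertiii{\widehat{\mmu}_j\GG_{j,j+1}}_\infty+\vertiii{(\KK_j-\widehat{\mmu}_j)\GG_{j,j+1}}_\infty\le B_{j,j+1}+\Gamma_g\vertiii{\KK_j-\widehat{\mmu}_j}_\infty$, where the first bound is the elementary computation $\bigl|\sum_r\alpha_{j,r}(x)\,\mmu_{j,r}(g_{j,j+1}\psi)\bigr|\le B_{j,j+1}\sum_r\alpha_{j,r}(x)\le B_{j,j+1}$ for $\norm{\psi}_\infty\le 1$, valid for any $\alpha$ satisfying \eqref{eq.alpha.condition}. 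Because the triangle inequality is applied to operator norms on $L^\infty$, the additive per-step factor appears exactly once per stage, and the density-induced growth of the sup norm is absorbed into that same factor. Replacing your $L^2$ recursion by this single $L^\infty$--$L^1$ split is what is needed to obtain \eqref{eqVjnwithmixing} as stated.
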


%
%
\begin{proof}
Since $\|\GG_{k,k+1}\|_{\infty}\le \Gamma_g$, we have
\[V_{k,n}(\phi) \leq \Gamma_g \, \norm{ \KK_{k+1} \, \GG_{k+1,k+2} \cdot\ldots \cdot\GG_{n-1,n}\KK_n \phi}^2_{L^2(\mmu_{k+1})}.\]
Moreover, $\norm{ \KK_{k+1} \, \GG_{k+1,k+2} \cdot\ldots \cdot\GG_{n-1,n}\KK_n \phi}^2_{L^2(\mmu_{k+1})}$ is less than
\begin{align*}
\norm{ \KK_{k+1} \, \GG_{k+1,k+2} \cdot\ldots \cdot\GG_{n-1,n}\KK_n \phi}_{\infty} \times \norm{\KK_{k+1} \, \GG_{k+1,k+2} \cdot\ldots \cdot\GG_{n-1,n}\KK_n \, \phi}_{L^1(\mmu_{k+1})}.
\end{align*}
Since the Markov kernel $\KK_{k+1}$ lets $\mmu_{k+1}$ invariant, this is a contraction in $L^1(\mmu_{k+1})$; consequently
\begin{align*}
\, & \norm{\KK_{k+1} \, \GG_{k+1,k+2} \, \KK_{k+2} \cdot\ldots \cdot\GG_{n-1,n}\KK_n \, \phi}_{L^1(\mmu_{k+1})}
\\
&\leq 
\norm{\GG_{k+1,k+2} \, \KK_{k+2}\cdot\ldots \cdot\GG_{n-1,n}\KK_n \, \phi}_{L^1(\mmu_{k+1})}\\
&=
\norm{\KK_{k+2} \cdot\ldots \cdot\GG_{n-1,n}\KK_n \, \phi}_{L^1(\mmu_{k+2})}
\leq \ldots \leq 
\norm{\phi}_{L^1(\mmu)} \leq \norm{\phi}_{\infty}.
\end{align*}
Also, since $\norm{\KK_n \, \phi}_{\infty} \leq \norm{\phi}_{\infty}$, we have that
\begin{align*}
\norm{ \KK_{k+1} \, \GG_{k+1,k+2} \cdot\ldots \cdot\GG_{n-1,n}\KK_n \phi}_{\infty}
\leq
\curBK{ \prod_{j=k+1}^{n-1}
\vertiii{\KK_j \, \GG_{j+1}}_{\infty} } \, 
\norm{\phi}_{\infty}.
\end{align*}
Furthermore, 
$\vertiii{\KK_j \, \GG_{j+1}}_{\infty} 
\leq 
\vertiii{\widehat{\mmu}_j\, \GG_{j+1}}_{\infty} 
+ \vertiii{\BK{ \KK_j - \widehat{\mmu}_j} \, \GG_{j+1}}_{\infty}$. Definition \eqref{growthwithinmodedefeq} yields that $\vertiii{\widehat{\mmu}_j\, \GG_{j+1}}_{\infty}$ is less than $B_{j,j+1}$; similarly, $\vertiii{\BK{ \KK_j - \widehat{\mmu}_j} \, \GG_{j+1}}_{\infty}\le\Gamma_g \, \vertiii{\KK_j - \widehat{\mmu}_j }_{\infty}$. It follows that
\begin{align*}
\vertiii{\KK_j \, \GG_{j+1}}_{\infty}
\; \leq  \; 
B_{j,j+1} + \Gamma_g \, \vertiii{\KK_j - \widehat{\mmu}_j }_{\infty},
\end{align*}
as required.
\end{proof}
Note that unlike Theorem \ref{thmuni}, here we use the supremum norm (thus our result is restricted to bounded functions), because we have encountered some technical difficulties when using $\|\cdot \|_{L^2(\mmu_{k})}$ norms in this setting. The main improvement in this theorem over the results of \cite{schweizer2012non} is that mixing is allowed between the modes. For completeness, a variant of the asymptotic variance bound of \cite{schweizer2012non} (when no mixing is allowed between the modes) is presented in Section \ref{SecNoMixing} of the Appendix.

\subsection{Framework for metastable approximation}
\label{sec.metastable.approx}
In order to apply Theorem \ref{thmwithmixing1}, one needs to bound the norm $\norm{ \KK_{k}-\widehat{\ppi}_{k} }_{L^\infty}$ for $1\le k\le n$. This section provides with a framework for establishing such bounds.
Suppose that the state space $E$ is partitioned into modes $\{ \F^{(j)} \}_{j=1}^m$ and that each mode is comprised of an \emph{inner region} $\I^{(j)}$ and a \emph{border region} $\B^{(j)}$; in other words, we have the following decomposition of the state space,
\begin{align*}
E \; = \; \bigsqcup_{i=1}^m \F^{(j)}
\; = \; \bigsqcup_{i=1}^m \curBK{ \I^{(j)} \sqcup \B^{(j)} }.
\end{align*}
It will reveal useful to set
\begin{align*}
\I = \bigsqcup_{j=1}^m \I^{(j)}
\qquad \textrm{and} \qquad
\B = \bigsqcup_{j=1}^m \B^{(j)}.
\end{align*}
For every $1 \leq j \leq m$, we denote restrictions of $\mmu$ to $\F^{(j)}$ by $\mmu^{(j)}$, defined by the relation
\begin{equation}\label{murestricteddefeq}
\mmu^{(j)}(\phi) = \frac{\mmu(\phi_{| \F^{(j)}})}{\mmu(\F^{(j)})}.
\end{equation}
For $x \in E$ and an integer $t \geq 1$, consider the quantity
\begin{align*}
q^{(i)}(x,t)
\defby
\PP\BK{ \left. X_{\tau_\B} \in \I^{(i)}, \tau_\B \leq t \right| X_0=x},
\end{align*}
where $\{X_k\}_{k \geq 0}$ is a Markov chain with Markov transition kernel $\P$, and $\tau_\B$ is the time of exit from $\B$,  $\tau_\B \defby \inf \curBK{t \geq 0: X_t \not \in \B}$. This expresses the probability that we exit the border regions in one of the first $t$ steps, and the first step outside $\B$ is in the inner region $\I^{(i)}$ . Our main result in this section, Theorem \ref{thmwithmixing2} quantifies the approximation $\P^t \approx \widehat{\ppi}^{(t)}$, where the kernel $\widehat{\pi}^{(t)}$ is defined as
\begin{align}
\label{widehatpitdefeq}\widehat{\pi}^{(t)}(x,dy)
\; \defby \; 
\left\{
\begin{array}{ll}
\mmu^{(j)}(dy)  &  \text{ for } x \in \I^{(j)} \\
\sum_{j} q^{(j)}(x,\lfloor t/2\rfloor) \cdot \mmu^{(j)}(dy) & \text{ for } x \in \B.
\end{array}   
\right.
\end{align}
Note that $\sum_{i=1}^{m(k)}q^{(i)}(x,t)=\PP(\tau_\B \leq t)\le 1$ so condition \eqref{eq.alpha.condition} is satisfied. The bound on the discrepancy $\vertiii{ \mtx{P}^t-\widehat{\ppi}^{(t)} }_{\infty}$ is expressed in terms of the event $\SB(x,t)$ (stay in the border region) defined as
\begin{align*}
\SB(x,t)
&\defby
\big\{
\text{start at $x\in \B$ and stay inside $\B$ for $t$ steps}
\big\}.
\end{align*}
When starting in the inner region of a mode and after a number of steps slightly larger than the local mixing time, the Markov chain is typically approximately distributed according to the restriction of the stationary distribution to that mode; nevertheless, the Markov chain is still not likely to escape from that mode. When starting from a border region, the Markov chain typically enters the inner region rapidly then stay there in the rest of the steps, and mix well within that mode. The number of steps thus needs to be chosen carefully to make sure that we exit the border regions and mix well within the modes, but do not exit the modes once we have entered an inner region (due to the ``potential well'' effect).

%
%
\begin{theorem}[Quantifying the quality of metastable approximation]\label{thmwithmixing2}
Let $\widehat{\ppi}^{(t)}$ be defined as in \eqref{widehatpitdefeq}, then the following bound holds,
\begin{equation} \label{Ptpitbound}
\begin{aligned}
\vertiii{ \P^t - \widehat{\ppi}^{(t)} }_{\infty}
&\leq 
\max_{x \in \B} \; \PP \BK{ \SB(x,\lfloor t/2\rfloor) } \\
&+2 \, \max_{1\leq i \leq m} \, \max_{\lceil t/2\rceil \leq r \leq t}\, \sup_{x \in \I^{(i)}} \; \dtv \BK{ \P^{r}(x,\cdot),\mmu^{(i)}}.
\end{aligned}
\end{equation}
\end{theorem}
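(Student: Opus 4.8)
The plan is to bound $\vertiii{\P^t - \widehat\ppi^{(t)}}_\infty$ by acting on a test function $\phi$ with $\|\phi\|_\infty \le 1$ and estimating $\big|(\P^t\phi)(x) - (\widehat\ppi^{(t)}\phi)(x)\big|$ uniformly over $x \in E$. I would split into the two cases appearing in the definition \eqref{widehatpitdefeq}: $x$ in an inner region $\I^{(j)}$, and $x$ in a border region $\B$. Throughout, I would use the semigroup identity $\P^t = \P^{t-r}\P^{r}$ and the fact that $\P^{t-r}$ is an $L^\infty$-contraction (it is a Markov kernel), so that errors incurred ``at time $r$'' propagate without amplification; this is the mechanism that lets the two terms on the right-hand side of \eqref{Ptpitbound} appear additively rather than multiplicatively.

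For the \emph{inner case} $x \in \I^{(j)}$: here $\widehat\ppi^{(t)}(x,\cdot) = \mmu^{(j)}$, so I must show $\P^t(x,\cdot)$ is close in the relevant sense to $\mmu^{(j)}$. Run the chain for $r$ steps where $r$ ranges over $\lceil t/2\rceil \le r \le t$. On the event that the chain has stayed within the mode $\F^{(j)}$, I would like to say $\P^{r}(x,\cdot)$ restricted to $\F^{(j)}$ is within $\dtv$-distance $\sup_{y\in\I^{(i)}}\dtv(\P^r(y,\cdot),\mmu^{(i)})$ of $\mmu^{(j)}$; the subtlety is that $x$ might escape $\F^{(j)}$, so I must track the leakage. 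The clean way is: write $\P^t\phi(x) = \P^{\lceil t/2\rceil}(\P^{\lfloor t/2\rfloor}\phi)(x)$, note $\psi := \P^{\lfloor t/2\rfloor}\phi$ still satisfies $\|\psi\|_\infty\le 1$, and then compare $\P^{\lceil t/2\rceil}(x,\cdot)$ to $\mmu^{(j)}$ directly via total variation — but that only gives one $\dtv$ term, whereas \eqref{Ptpitbound} has a $\max$ over $r$ and a factor $2$. This suggests the intended argument splits the time budget so that escape-from-the-mode is controlled by comparing $\P^r(x,\cdot)$ to $\mmu^{(i)}$ at \emph{every} intermediate time $r\in[\lceil t/2\rceil,t]$ (small mass can leak at any such time, and each leakage event is charged to the corresponding $\dtv$ term), which accounts for the $\max_{\lceil t/2\rceil\le r\le t}$; the factor $2$ comes from combining the ``distribution is close to $\mmu^{(i)}$'' estimate with the ``so it does not leave $\F^{(i)}$'' estimate (since $\mmu^{(i)}(\F^{(i)})=1$, a small $\dtv$ ball around $\mmu^{(i)}$ puts little mass outside $\F^{(i)}$).

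For the \emph{border case} $x \in \B$: decompose the first $\lfloor t/2\rfloor$ steps according to whether the chain has exited $\B$ and, if so, which inner region $\I^{(i)}$ it first landed in — this is exactly the decomposition defining $q^{(i)}(x,\lfloor t/2\rfloor)$. The remaining probability $\PP(\SB(x,\lfloor t/2\rfloor))$ of staying in $\B$ the whole time is charged to the first term of \eqref{Ptpitbound}; since $\|\phi\|_\infty\le 1$ this contributes at most $\max_{x\in\B}\PP(\SB(x,\lfloor t/2\rfloor))$. On the complementary event, condition on the first exit time $\sigma\le\lfloor t/2\rfloor$ and the entry point $X_\sigma\in\I^{(i)}$; by the strong Markov property the chain then evolves for $t-\sigma \ge \lceil t/2\rceil$ further steps from a point in $\I^{(i)}$, which reduces precisely to the inner-case estimate already established (with $t-\sigma$ in the role of the residual time, which lies in $[\lceil t/2\rceil, t]$). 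Summing over $i$ and over the exit time, the weights $q^{(i)}(x,\lfloor t/2\rfloor)$ assemble into $\widehat\ppi^{(t)}(x,\cdot)$ and the error is bounded by the same second term (times $2$).

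\textbf{Main obstacle.} The delicate point is the bookkeeping in the inner case — showing that mass leaking out of $\F^{(i)}$ over the window $[\lceil t/2\rceil, t]$ is genuinely controlled by $\max_{\lceil t/2\rceil\le r\le t}\sup_{x\in\I^{(i)}}\dtv(\P^r(x,\cdot),\mmu^{(i)})$ and not by a sum of such quantities over all $r$, and pinning down exactly where the factor $2$ is spent. Getting the strong-Markov splicing in the border case to line up so that the residual time always falls in the advertised range $[\lceil t/2\rceil,t]$ (hence why $\widehat\ppi^{(t)}$ uses $q^{(i)}(\cdot,\lfloor t/2\rfloor)$ rather than $q^{(i)}(\cdot,t)$) is the other place where care is needed, though once the inner estimate is stated uniformly in the residual time this is routine.
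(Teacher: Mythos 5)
Your overall skeleton matches the paper's proof: split into the inner and border cases of \eqref{widehatpitdefeq}, decompose over the first exit time from $\B$ and the inner region first entered, charge the event of never exiting $\B$ to the first term, and use the strong Markov property so that the residual time $t-\tau_\B$ lands in $[\lceil t/2\rceil,t]$ (which is exactly why $\widehat{\ppi}^{(t)}$ uses $q^{(i)}(\cdot,\lfloor t/2\rfloor)$). Your border-case paragraph is essentially the paper's argument: writing $\P^t(x,dy)=\PP(X_t\in dy\,|\,E^c)\PP(E^c)+\sum_i\PP(X_t\in dy\,|\,E^{(i)})\PP(E^{(i)})$ with $E^{(i)}=\{X_{\tau_\B}\in\I^{(i)},\,\tau_\B\le\lfloor t/2\rfloor\}$ and applying the triangle inequality.

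However, you have manufactured an obstacle in the inner case that does not exist, and your proposed resolution of it would not work. First, the factor $2$ has nothing to do with combining a ``close to $\mmu^{(i)}$'' estimate with a ``does not leave $\F^{(i)}$'' estimate: it is just the identity $\vertiii{\P^t-\widehat{\ppi}^{(t)}}_{\infty}=2\sup_{x\in E}\dtv\bigl(\P^t(x,\cdot),\widehat{\ppi}^{(t)}(x,\cdot)\bigr)$, i.e.\ the standard conversion between the $L^\infty$ operator norm of a difference of (sub-)kernels and total variation. Second, for $x\in\I^{(j)}$ one has $\widehat{\ppi}^{(t)}(x,\cdot)=\mmu^{(j)}$, so the inner case is a one-line direct comparison $2\dtv(\P^t(x,\cdot),\mmu^{(j)})$, which is dominated by the second term of \eqref{Ptpitbound} simply because the maximum over $r\in[\lceil t/2\rceil,t]$ includes $r=t$. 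There is no leakage bookkeeping to do here: the quantity $\dtv(\P^r(x,\cdot),\mmu^{(i)})$ appears on the right-hand side as a hypothesis-level quantity, and whether the chain escapes $\F^{(j)}$ is its problem, not yours (that issue only arises later, when one actually bounds this TV distance, as in Proposition \ref{keyprop5}). Third, the maximum over $r$ arises solely from the border case, where the residual time $t-\tau_\B$ is random; your alternative mechanism of ``charging each leakage event at each intermediate time to the corresponding $\dtv$ term'' would naturally produce a \emph{sum} over $r$ rather than a maximum --- precisely the failure you flag as your main obstacle --- so you should abandon that route and take the direct one.
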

%
%
%
%
\begin{proof}
For two distributions $\eta_1, \eta_2$ on $E$ (which are not necessarily probability distributions), we define their total variational distance as 
\[\dtv(\eta_1,\eta_2)\defby\frac{1}{2}\sup_{f:E\to [-1,1]} |\eta_1(f)-\eta_2(f)|,\]
where the supremum is taken among Borel-measurable functions from $\E$ to $[-1,1]$.
By the definition of $\vertiii{ \P^t-\widehat{\ppi}^{(t)} }_{\infty}$, we can rewrite it as
\[\vertiii{ \P^t-\widehat{\ppi}^{(t)} }_{\infty}=2\sup_{x\in E} \dtv\left(\P^t(x,\cdot),\widehat{\ppi}^{(t)}(x,\cdot) \right),\]
so we need to bound this total variational distance for every $x\in E$. We consider two separate cases.

Fist, assume that $X_0 = x \in \I^{(i)}$ for some $1\leq i \leq m$. In this case, $q^{(i)}(x,\lfloor t/2\rfloor)=1$, so $\widehat{\ppi}^{(t)}(x,\cdot)=\mmu^{(i)}(\cdot)$, and thus
\[2\dtv(\P^t(x,\cdot),\widehat{\ppi}^{(t)}(x,\cdot))=2\dtv\left(\P^t(x,\cdot), \mmu^{(i)}\right),\]
which is bounded by the right hand side of \eqref{Ptpitbound}.

Alternatively, assume that $X_0 = x \in \B$. 
For $1\le i\le m$, define the events 
\[E^{(i)}\defby\left\{X_0=x, X_{\tau_\B} \in \I^{(i)}, \tau_\B \leq \lfloor t/2\rfloor \right\}.\] Let $E^{\cup}\defby\cup_{1\le i\le m}E^{(i)}$, and $E^{c}=(E^{\cup})^c$ (the complement of $E^{\cup}$). Then $\PP(E^{(i)})=q^{(i)}(x,\lfloor t/2\rfloor)$, and we can write the two kernels $\P^t(x,dy)$ and $\widehat{\ppi}^{(t)} (x,dy)$ as
\begin{align*}
\P^{t}(x,dy)&=\PP(X_t\in dy|E^{c})\cdot \PP(E^{c})+\sum_{i=1}^{m} \PP(X_t\in dy|E^{(i)})\cdot \PP(E^{(i)}),\\
\widehat{\ppi}^{(t)} (x,dy)&=\sum_{i=1}^{m}\mmu^{(i)}(dy)\cdot \PP(E^{(i)}).
\end{align*}
Based on this, by the triangle inequality, we have
\begin{align*}
2\dtv(\P^t(x,\cdot),\widehat{\ppi}^{(t)}(x,\cdot))\le
\PP(E^c)+2\sum_{i=1}^{m} \PP(E^{(i)})\cdot \dtv\left(\LL(X_t|E^{(i)}), \mmu^{(i)}\right),
\end{align*}
where $\LL(X_t|E^{(i)})$ denotes the law of $X_t$ conditioned on $E^{(i)}$. The result now follows from the fact that $\PP(E^c)\le \max_{x \in \B} \; \PP \BK{ \SB(x,\lfloor t/2\rfloor) }$ and 
\[\dtv\left(\LL(X_t|E^{(i)}), \mmu^{(i)}\right)\le \max_{1\leq i \leq m} \, \max_{\lceil t/2\rceil \leq r \leq t}\, \sup_{x \in \I^{(i)}} \; \dtv \BK{ \P^{r}(x,\cdot),\mmu^{(i)}}.\qedhere\]
\end{proof}
To use Theorem \ref{thmwithmixing2}, one does not need to know the exact probabilities $\P^t(x,\F^{(i)})$ of ending up in the different modes. Instead, one only needs to estimate the probability of  staying in the border regions for many steps, and the total variational distance of $\P^t(x,\cdot)$ from the local restriction $\mmu^{(i)}$ when started from a point $x$ in an inner region $\I^{(i)}$. As shall be seen in the application, these quantities can typically be bounded using concentration inequalities and drift arguments. 

\section{Interpolation to independence sequence}\label{secinterpolationindependence}
Suppose that we have a sequence of probability distributions $(\eta_k)_{k\in \Z_+}$ defined on $(\Lambda^k)_{k\in \Z_+}$, respectively (which are  product spaces of increasing dimension), and that our target is $\eta_d$. Suppose that these distributions satisfy some sort of \emph{scale invariance property}. By this, we mean that for sufficiently high $k$, the number, position, and probability mass of the modes are essentially constant in some appropriate coordinate system (i.e. the positions of the modes have approximately reached a limit).
For such systems, we define the  \emph{interpolation to independence sequence} as a sequence of distributions on $\Lambda^d$, denoted by $\mmu_0, \ldots, \mmu_d$ such that $\mmu_d:=\eta_d$ (i.e. the target measure in dimension $d$), and $\mmu_k$ corresponds to the distribution when the first $k$ coordinates are distributed according to $\eta_k$, and the rest of the coordinates are i.i.d. uniformly distributed on $\Lambda$ (independently of the first $k$ coordinates). For the SMC sampler based on this sequence, we use some appropriate MCMC kernels $\mtx{K}_k$ that first change the first $k$ coordinates (such as versions of the Glauber dynamics), and then replace the rest of the coordinates by independent copies.

The interpolation to independence sequence consists of miniaturised versions of the original system (and some independent coordinates to keep the state space invariant). As we are going to see, if the system satisfies the scale invariance property, then the number and location of the modes is essentially the same across all the distributions. This ensures that the growth-within-mode constants $B_{k,k+1}$ are small, and thus the method is efficient if the MCMC moves are chosen appropriately (see Theorems \ref{thmwithmixing1} and \ref{thmwithmixing2}). This is a key difference with the standard tempering sequence, because the change of the temperature parameter might alter the number and location of the modes drastically, so the growth-within-mode constants might be very large (see \cite{BhatnagarRandall}).



The idea of interpolating to independence have appeared in the literature of Stein's method, see Section 3.4 of \cite{Steincouplings}. At this point, we note that somewhat similar ideas have appeared for SMC methods in \cite{multilevelsmc}, where a gradual coarsening of the grid is used for the solution of PDEs, and in \cite{lindsten2014divide}, where graphical models are studied, and the interpolating sequence is chosen by breaking them into smaller blocks gradually. Multigrid methods have been fruitful for solving challenging problems in numerical analysis, and MCMC samplers based on this idea have been also proposed in \cite{goodman1989multigrid}. In addition, some variations to the standard tempering distributions in have been proposed  in the literature to parallel tempering, such as truncating the peaks (\cite{Equienergy}) and moving across different dimensional spaces (\cite{LiuChiara}). 

However, most of these works are lacking in theoretical explanation of the efficiency of the proposed methodology. Moreover, it is not clear to us whether they can overcome the type of problem we encounter for the Potts model (where some of the modes have exponentially small probability according the initial distribution, and thus they might take exponentially long time to be discovered). We note that Theorem 7.7 of \cite{bhatnagar2015simulated} has shown that for some model specific interpolating distributions called entropy dampening distributions, the simulated tempering MCMC sampler for the Potts model mixes in polynomial time.

The following section states our application in this paper, a bound on the asymptotic variance of the SMC method using the interpolation to independence sequence of distributions applied to the Potts model. We would like to emphasise the fact that the interpolation to independence methodology is not limited to this single example. A natural generalisation is to use it for sampling from exponential random graph models. For such models, MCMC sampling has been shown to mix very slowly, and the location of the modes depends of the temperature parameter (see \cite{ChatterjeeExpRandGraphs}). However, they are believed to be scale invariant, so our method could be useful for sampling from them.
\section{Application to the Potts model}\label{SecPotts}
\subsection{Introduction}
In this section, we are going to study the Potts model introduced in \cite{PottsRB}. Let $G$ be a simple graph with $M$ vertices. The model consists of $M$ spins $\sigma:=(\sigma_1,\ldots,\sigma_M)$ taking values in $\Omega:=\{1,\ldots,q\}^M$ for some $q\ge 2$ (these are called ``colours''). The Hamiltonian of the model is defined as
\begin{equation}
H(\sigma)=-\sum_{(i,j)\in G}J\cdot 1_{[\sigma_i=\sigma_j]},
\end{equation}
where the summation is over all the edges in $G$. The sign of $J$ determines whether the neighbors prefer the same colour (ferromagnetic case) or different colour (antiferromagnetic case).
The Gibbs distribution on configurations is given by
\begin{equation}
\mmu_{\tbeta,G}^{\mathrm{Potts}}(\sigma):=\frac{\exp(-\beta\cdot H(\sigma))}{Z(\beta)},
\end{equation}
where $\beta$ is the inverse temperature parameter (a constant independent of $\sigma$), and $Z(\beta)$ is the normalising constant.

We will consider the 3 colour mean-field case, when $G$ is the complete graph, and $q=3$. In this case, a simple rearrangement (see \cite{BhatnagarRandall}) shows that we can equivalently write the model as 
\begin{equation}
\mmu_{\tbeta,M}^{\mathrm{Potts}}(\sigma):=\frac{\exp\left(M \tbeta\left(s_1(\sigma)^2+s_2(\sigma)^2+s_2(\sigma)^2\right)\right)}{\tilde{Z}(\tbeta)},
\end{equation}
where $s_k(\sigma)$ is the ratio of spins of colour $k$  for $k=1, 2, 3$, $\tbeta:=\frac{\beta J}{2}M$, and $\tilde{Z}(\tbeta)$ is the new normalising constant. We call the triple $s_1(\sigma),s_2(\sigma),s_3(\sigma)$ the magnetisation vector.

This model is known to undergo a first order phase transition at $\tbeta_c=2\log(2)$. We denote the distribution of the magnetisation vector as
\[\mmu_{\tbeta,M}^{\mathrm{mag}}(s_1,s_2,s_3):=\mmu_{\tbeta,M}^{\mathrm{Potts}}(\{\sigma: s_1(\sigma)=s_1,s_2(\sigma)=s_2,s_3(\sigma)=s_3\}).\]
To show the difference between the different phases, we include 3 contour plots of 
\[\log\mmu_{\tbeta,M}^{\mathrm{mag}}(s_1,s_2,s_3)=\log\left(\binom{M}{M s_1}\binom{M-M s_1}{Ms_2} \cdot \frac{\exp\left(M \tbeta\left(s_1^2+s_2^2+s_3^2\right)\right)}{\tilde{Z}(\tbeta)}\right)\]
for $\tbeta=\tbeta_c/2$, $\tbeta=\tbeta_c$ and $\tbeta=2\tbeta_c$, for $M=1000$.
The plots show $s_1$, $s_2$ and $s_3$ in a barycentric coordinate system, the darker colours correspond to areas of higher probability.
\begin{figure}[ht]
  \centering
  \subcaptionbox{$\tbeta<\tbeta_c$\label{figcontour1}}{\includegraphics[height=3.4cm,bb=0 0 637 516]{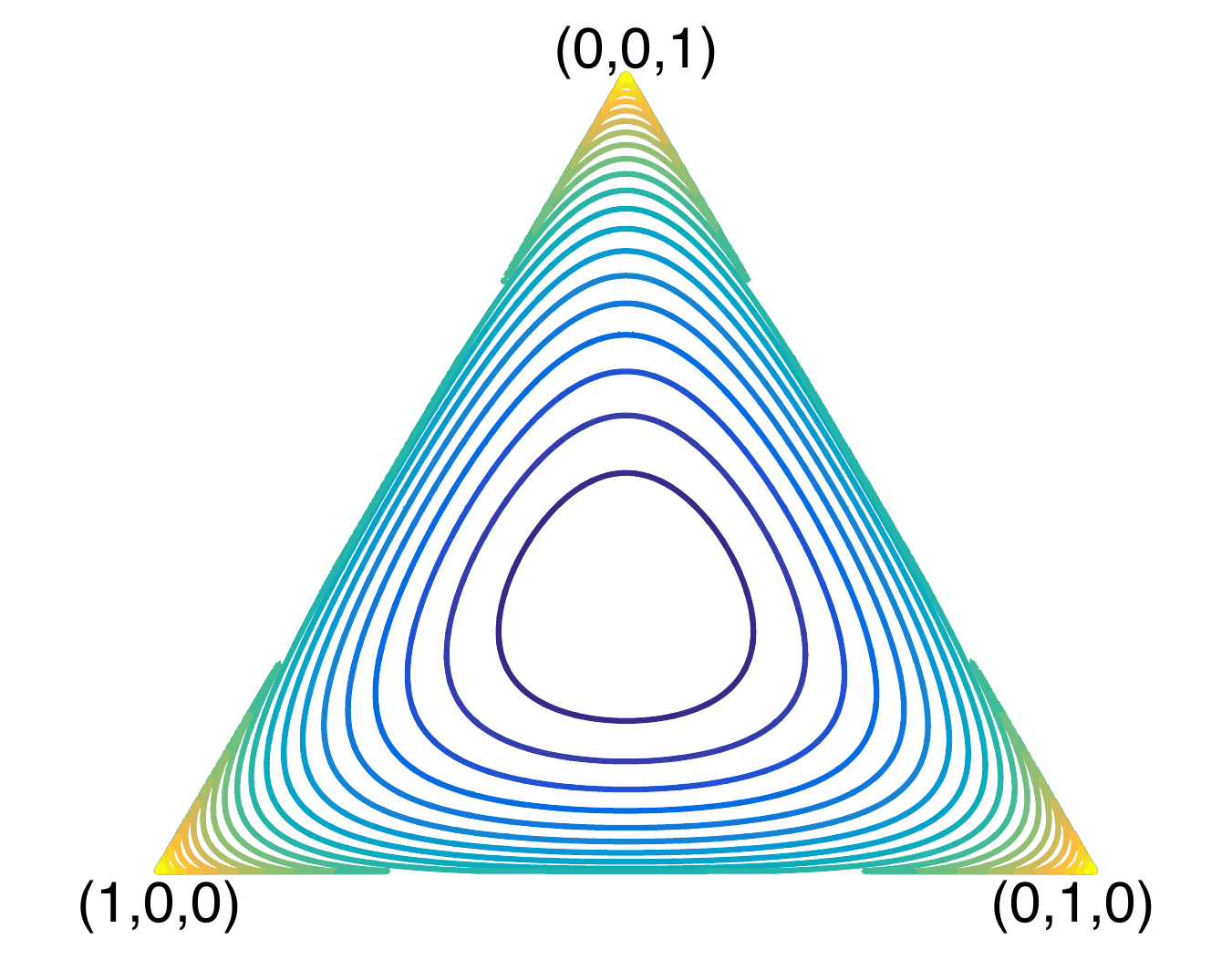}}\hspace{1em}
  \subcaptionbox{$\tbeta=\tbeta_c$\label{figcontour2}}{\includegraphics[height=3.4cm,bb=0 0 637 516]{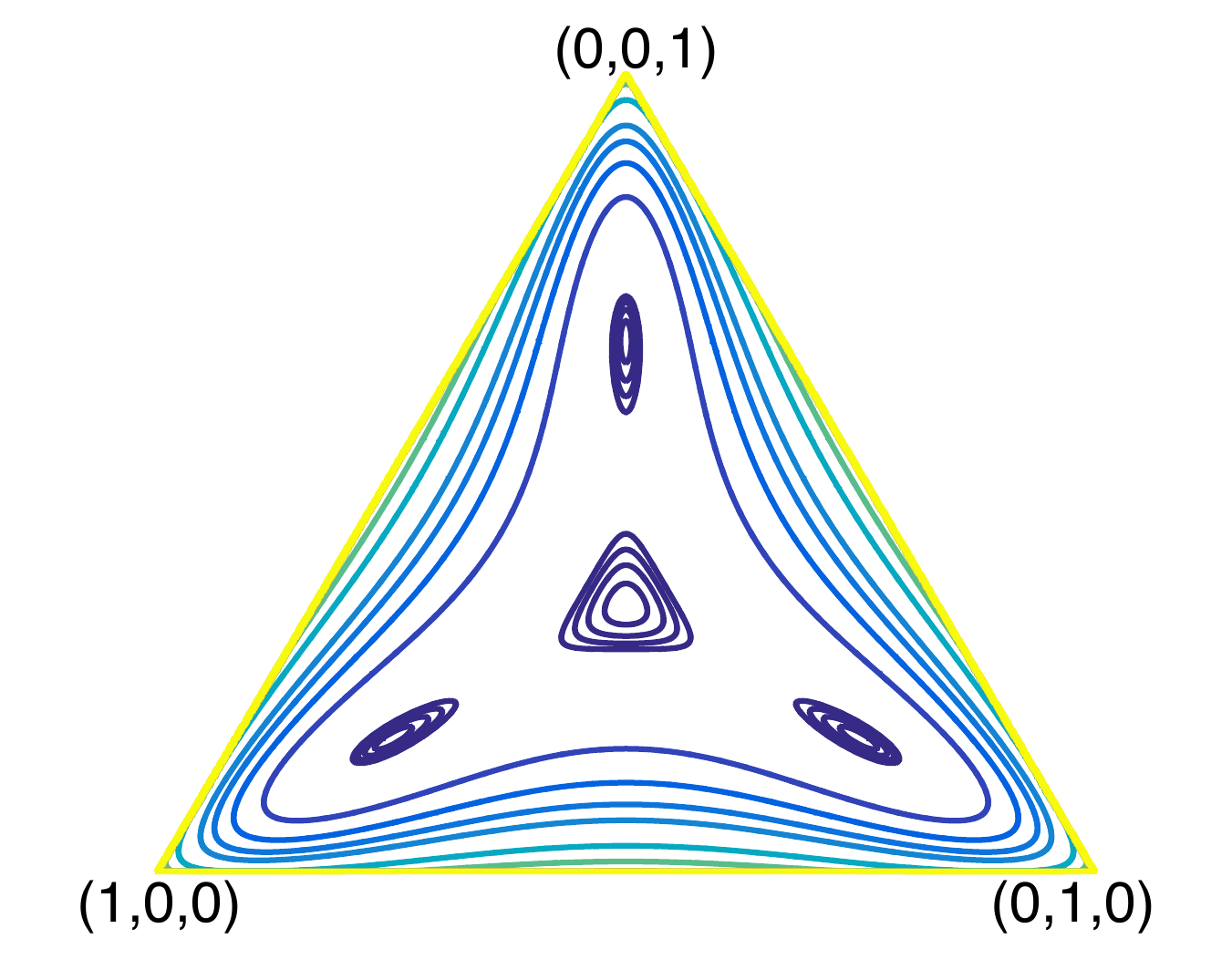}}\hspace{1em}
  \subcaptionbox{$\tbeta>\tbeta_c$\label{figcontour3}}{\includegraphics[height=3.4cm, bb=0 0 637 516]{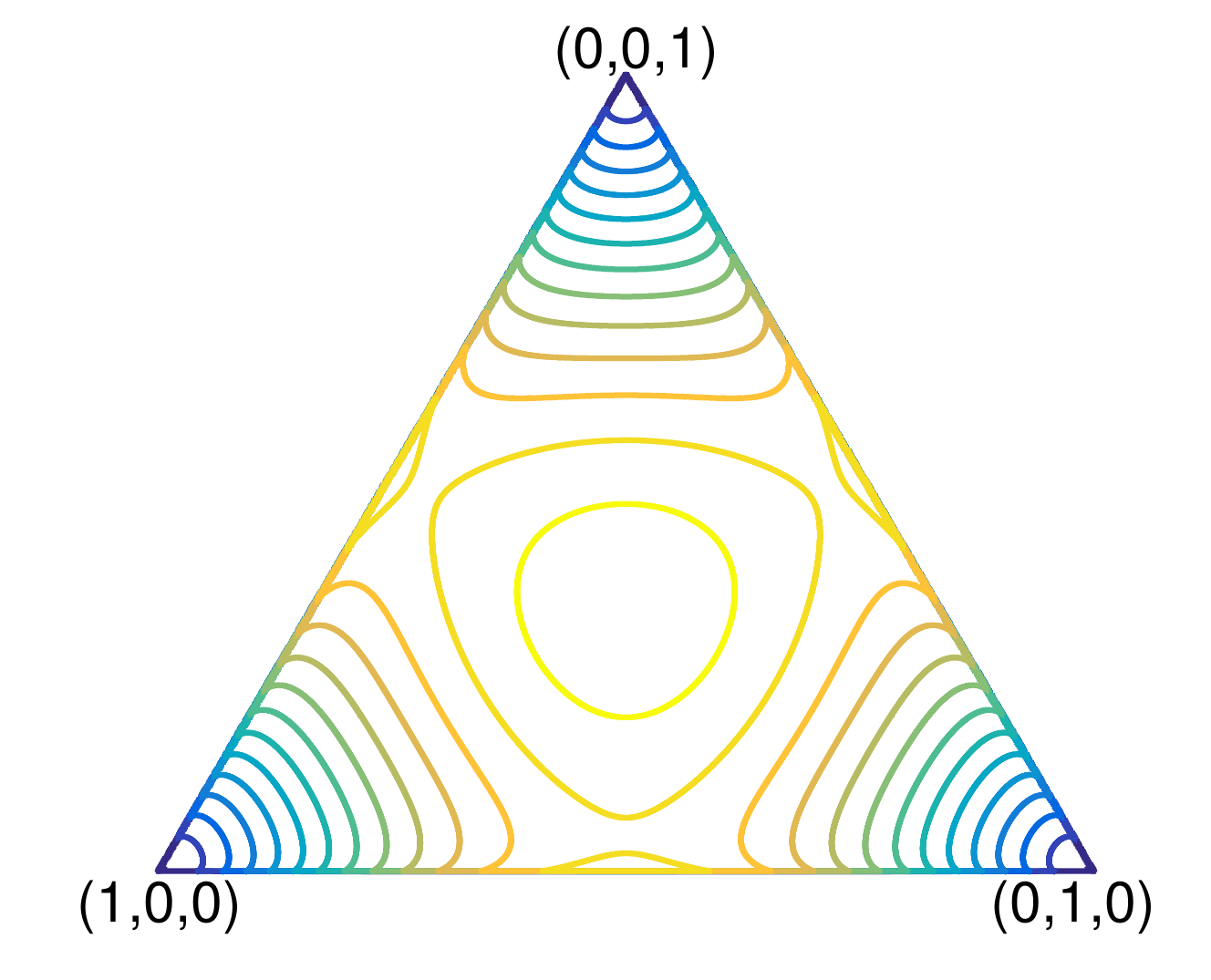}}

\caption{Contour plots of the log-likelihood as a function of $(s_1,s_2,s_3)$}
\end{figure}
As we can see, for $\tbeta<\tbeta_c$, there is a single local maximum centered at $(s_1, s_2, s_3)=\left(\frac{1}{3},\frac{1}{3},\frac{1}{3}\right)$. At $\tbeta=\tbeta_c$, there are 4 local maximums, centered at 
\begin{equation}\label{C1234defeq}
C_{1}:=\left(\frac{2}{3},\frac{1}{6},\frac{1}{6}\right), C_{2}:=\left(\frac{1}{6},\frac{2}{3}, \frac{1}{6}\right),  C_{3}:=\left(\frac{1}{6},\frac{1}{6},\frac{2}{3}\right),\hspace{0.5mm}\text{and}\hspace{1.1mm}C_{4}:=\left(\frac{1}{3},\frac{1}{3},\frac{1}{3}\right).\end{equation}
Finally, for $\tbeta>\tbeta_c$, there are 3 local maximums, centered at $(1,0,0)$, $(0,1,0)$ and $(0,0,1)$. 

The Glauber dynamics Markov chain updates a randomly chosen spin conditioned on the rest of the spins in each step. \cite{PeresPotts} has shown that this chain is fast mixing in part of the region $\tbeta<\tbeta_c$ (called the \emph{high temperature} region), but its mixing time increases exponentially in the number of spins for $\tbeta\ge \tbeta_c$. This phenomenon is caused by the existence of multiple modes for $\tbeta\ge \tbeta_c$. 

Parallel tempering (also called Metropolis coupled MCMC) is a popular method that has been shown to work well for some multimodal distributions (see \cite{MadrasZheng}, \cite{Woodardrapidmixing}). However, it was shown in \cite{BhatnagarRandall} that parallel and simulated tempering will have exponentially slow mixing time for $\tbeta\ge \tbeta_c$ if the tempering distributions with the standard temperature ladder are used. The reason for this is that the 3 new modes that appear for $\tbeta\ge \tbeta_c$ have very little probability according to the uniform distribution on all configurations. This means that even if theoretically we could move between the modes in the levels of high temperature, practically we will almost never move to the 3 new modes.

The goal of this section is to show that this difficulty can be overcome by SMC methods using the \emph{interpolation to independence} sequence of distributions defined in Section \ref{secinterpolationindependence}. For simplicity, we will only consider the case $\tbeta=\tbeta_c$ (but we believe that similar arguments work for any temperature and number of colours). The MCMC moves that we are going to use for $\mmu_k$ do one step in the Glauber dynamics for $(\sigma_1,\ldots, \sigma_k)$, and replace the rest of the spins by independent copies. We denote the Markov kernel corresponding to this move by $\P_k$, and the kernel combining $t_k$ such steps by $\KK_k=\P_k^{t_k}$. Then it is easy to see that this kernel is a reversible with respect to $\mmu_k$.

It is not difficult to see heuristically that if we applying the SMC algorithm with Glauber dynamics steps, and choose the interpolating distributions $\mmu_i$ as $\mmu_{\tbeta_c\cdot (i/n),M}^{\mathrm{Potts}}$ for $0\le i\le n$ (tempering distributions), then most of the particles would stay close to the central mode, and they would never discover the 3 other modes (we have some numerical evidence for this). For such a sequence of distributions, the product of the growth-within-mode constants, $\prod_{j=0}^{M-1}B_{j,j+1}$, grows exponentially with $M$.

\subsection{Main result}
\begin{theorem}[SMC variance bound for the Potts model]\label{thmvarPotts}Suppose that $\tbeta=\tbeta_c$. There is a constant $C_1\in \R_+$ such that for the SMC sampler described above, assuming that the number of MCMC steps in stage $k$ is chosen as $t_k=\lceil C_1 k \log(k)^2\rceil$,
the asymptotic variance of the SMC empirical average of any bounded function $f:\Omega\to \R$ satisfies that
\begin{equation}\label{eqPottsvarbound}V_M(f)\le  C_2 M \|f\|_{\infty}^2,\end{equation}
for some absolute constant $C_2\in \R_+$.
\end{theorem}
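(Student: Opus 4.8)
The plan is to combine the general variance bound of Theorem~\ref{thmwithmixing1} with the metastable-approximation estimate of Theorem~\ref{thmwithmixing2}, after carefully identifying the modes of each interpolating distribution $\mmu_k$ and checking that the scale-invariance property of the $3$-colour mean-field Potts model at $\tbeta_c$ forces all the relevant constants to be well behaved. Concretely, for each $k$ one partitions the configuration space $\Omega=\{1,2,3\}^M$ according to which of the four centres $C_1,\dots,C_4$ in \eqref{C1234defeq} the magnetisation vector of $(\sigma_1,\dots,\sigma_k)$ is closest to (with a border region $\B^{(j)}_k$ surrounding each centre and the inner regions $\I^{(j)}_k$ being balls of a fixed radius in magnetisation space, shrinking only at the logarithmic rate needed so that the local CLT gives concentration). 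The first thing to establish is that the \emph{growth-within-mode constants} $B_{k,k+1}$ are $O(1)$: since $\mmu_{k+1}$ restricted to the first $k+1$ coordinates is again a Potts measure at the same $\tbeta_c$ on a slightly larger vertex set and the rest are independent uniform, the ratio $\mmu_{k+1}(F^{(r)}_k)/\mmu_k(F^{(r)}_k)$ of mode weights changes by at most a bounded factor per step; this is exactly where scale invariance is used, and it is what fails for the standard tempering ladder. Summing the geometric-like product in \eqref{eqVjnwithmixing} then only costs a factor $M$ provided each factor $B_{j,j+1}+\Gamma_g\,\norm{\KK_j-\widehat{\mmu}_j}_\infty$ is at most $1+O(1/M)$ or, more crudely, provided the number of stages where a factor exceeds $1$ is $O(\log M)$ and the excess is bounded — so the target bound $V_M(f)\le C_2 M\|f\|_\infty^2$ drops out once the two pieces are controlled.

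The second, and harder, ingredient is to verify that with $t_k=\lceil C_1 k\log(k)^2\rceil$ Glauber sweeps one has $\norm{\KK_k-\widehat{\mmu}_k}_\infty$ small — say summably small in $k$, or at worst $O(1/k)$ — so that it does not spoil the product. Via Theorem~\ref{thmwithmixing2} this reduces to two sub-estimates. First, the \emph{escape} estimate: starting from the border region $\B$, the chain must leave $\B$ within $\lfloor t_k/2\rfloor$ steps with overwhelming probability, i.e. $\max_{x\in\B}\PP(\SB(x,\lfloor t_k/2\rfloor))$ is tiny. Here one sets up a drift/Lyapunov function on magnetisation space (the free energy $\binom{k}{ks_1}\binom{k-ks_1}{ks_2}\exp(k\tbeta_c\sum s_i^2)$ has strictly negative gradient pushing out of the border, which is by construction a neighbourhood of the saddle ridges) and uses a Hoeffding/Azuma-type concentration inequality for the single-spin Glauber updates to show the border is crossed in $O(k\log k)$ steps with failure probability $k^{-\Theta(1)}$ or better; the extra $\log k$ factors in $t_k$ buy the needed tail. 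Second, the \emph{local mixing} estimate: started from an inner region $\I^{(i)}$, for all $r$ between $\lceil t_k/2\rceil$ and $t_k$ one needs $\dtv(\P_k^r(x,\cdot),\mmu_{k,i})$ small; inside a single mode the Potts measure is strongly log-concave in the magnetisation coordinates (it is a unimodal well), so Glauber dynamics restricted there mixes in $O(k\log k)$ steps — this can be shown by a path-coupling or spectral-gap argument for mean-field Glauber dynamics on a unimodal landscape, again with the $\log^2 k$ slack absorbing the failure probability of the rare excursions out of $\I^{(i)}$ and the union bound over $r$. One must also check the chain does not leak out of the mode once inside an inner region during these $t_k$ steps, which is the same drift estimate as above run in the opposite direction (inward drift at the inner boundary), contributing only a lower-order term.

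The main obstacle I expect is the \emph{simultaneous} control of the escape time and the non-escape-after-entry over the whole range $1\le k\le M$ with a single universal constant $C_1$, in the regime where $k$ is small: for $k=O(1)$ the ``modes'' are not yet well separated and the metastable picture degrades, so one has to argue that those finitely many stages contribute only an $O(1)$ additive amount to $V_M(f)$ (which is fine since each $V_{k,n}(f)\le\Gamma_g^{\,n-k}\cdots$ is anyway controlled crudely by $\Gamma_g$ and boundedness), and only invoke the sharp drift/concentration machinery once $k$ exceeds a threshold $k_0$ where the four centres $C_1,\dots,C_4$ are resolved at scale $\sqrt{(\log k)/k}$. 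A secondary technical point is that $\Gamma_g=\max_k\norm{g_{k,k+1}}_\infty$ must itself be shown to be bounded uniformly in $M$; this follows because $g_{k,k+1}$ is a ratio of Potts partition functions on graphs differing by one vertex times a bounded reweighting of one coordinate, but it needs the non-degeneracy of the partition function ratio at $\tbeta_c$, i.e.\ that adding one spin changes $\log\tilde Z$ by $O(1)$. Once $\Gamma_g=O(1)$, $B_{k,k+1}=1+O(1/k)$ (or at least bounded with only $O(\log M)$ exceptional stages), and $\norm{\KK_k-\widehat{\mmu}_k}_\infty$ summable, plugging into \eqref{eqVjnwithmixing} and summing over $k$ yields \eqref{eqPottsvarbound} with $C_2$ depending only on these absolute constants.
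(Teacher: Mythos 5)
Your overall architecture is the same as the paper's: bound each $V_{j,M}(f)$ via Theorem~\ref{thmwithmixing1}, control $\vertiii{\KK_k-\widehat{\mmu}_k}_\infty$ via Theorem~\ref{thmwithmixing2} by splitting into a border-escape estimate (drift plus Azuma--Hoeffding) and a within-mode mixing estimate (path coupling / curvature), and handle the small-$k$ stages separately by collapsing to a single mode. However, there is a genuine gap in your treatment of the growth-within-mode constants, which is in fact the technical heart of the argument. You only argue that $B_{k,k+1}=O(1)$ (``the ratio of mode weights changes by at most a bounded factor per step''). That is far too weak: the bound \eqref{eqVjnwithmixing} involves the \emph{product} $\prod_{i=j+1}^{M-1}B_{i,i+1}$, and a uniform bound $B_{i,i+1}\le C$ with $C>1$ only gives $C^{M}$, i.e.\ an exponentially large variance. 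What is actually needed, and what the paper proves (Proposition~\ref{keyprop2}), is the summable estimate $B_{j,j+1}\le 1+C\log(j)^{3/2}/j^{3/2}$, so that $\prod_j B_{j,j+1}$ is bounded by an absolute constant. This requires a second-order Laplace/local-CLT expansion of the restricted partition functions $\mmu_j(F_j^{(i)})=w_i+w_i'/j+O(\log(j)^3/j^{3/2})$ (via Stirling with explicit error, Taylor expansion around each centre, and a Riemann-sum-to-Gaussian-integral lemma); the leading-order scale-invariance heuristic alone does not produce the rate. Your two proposed sufficient conditions --- each factor $\le 1+O(1/M)$, or only $O(\log M)$ factors exceeding $1$ --- are neither established by your argument nor what actually holds (the true excess is $\asymp \log(j)^{3/2}/j^{3/2}$, which is much larger than $1/M$ for small $j$ and is nonzero at every stage, but is summable).

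A secondary, smaller issue: your fallback requirement that $\vertiii{\KK_k-\widehat{\mmu}_k}_\infty$ be ``at worst $O(1/k)$'' is also insufficient for the stated conclusion, since $\prod_k\bigl(1+\Gamma_g C/k\bigr)\asymp M^{\Gamma_g C}$ would turn the linear-in-$M$ bound \eqref{eqPottsvarbound} into a higher polynomial. You need genuine summability; the paper obtains $O(1/k^2)$ for both the border-escape probability (Proposition~\ref{keyprop4}, exploiting the exponential tail of the hitting time of a central region over $t_k=\Theta(k\log(k)^2)$ steps) and the local total-variation distance (Proposition~\ref{keyprop5}). With those two rates and the summable bound on $B_{j,j+1}-1$ in hand, each $V_{j,M}(f)$ is $O(\|f\|_\infty^2)$ uniformly in $j$ and the sum over $j=0,\dots,M$ gives \eqref{eqPottsvarbound}; without them the product does not close.
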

Due to space considerations we only prove this result for $\tbeta=\tbeta_c$, but we believe that a similar result holds for the Potts model with any number of colours, and any temperature parameter $\tbeta$. Here we note that the overall amount of computational effort needed to obtain a sample of unit variance by this algorithm is $\OO(M^3 \log^2(M))$. This is significantly better than the mixing rate obtained in \cite{bhatnagar2015simulated}. We think that this could be improved to a smaller power of $M$ by resampling only when the effective sample size parameter (ESS) is below a certain threshold (see \cite{DDJJRRSB}), since resampling is not necessary in each stage because the ratio of particles in the separate modes is converging quickly to a limit.


\section{Proofs for the Potts model}\label{SecProofPotts}
The proof of Theorem \ref{thmvarPotts}, based on our theoretical results, is rather complex. 
To make the presentation clear, in Section \ref{seckeypropsandproof} we first state 5 key propositions bounding the maximal density ratio ($\Gamma_g$) and growth-within-mode ($B_{j,j+1}$) constants in Theorem \ref{thmwithmixing1} and the 3 terms in Theorem \ref{thmwithmixing2}  (the probability of escaping from the inner regions, the probability of staying in the border regions, and the total variational distance to the local restriction of the stationary distribution to the mode when started from a place in one of the inner regions). Based on these propositions, we then prove Theorem \ref{thmvarPotts}.

In Section \ref{secpreliminaryPotts}, we show some preliminary results that will be used in the proof of the key propositions. This is followed by Section \ref{secdrift}, where we show a lemma bounding the drift of the Glauber dynamics chain. Finally, in Sections \ref{secproofprop1} - \ref{secproofprop3}, we prove the key propositions used in the proof of Theorem \ref{thmvarPotts}.

In the rest of this paragraph, we introduce some notations that will be used through the proof.
Let $\sigma(0)\in \Omega=\{1,2,3\}^M$ be a fixed starting point, and $\sigma(0),\sigma(1), \sigma(2),\ldots$ be a realisation of the Glauber dynamics chain started at $\sigma(0)$. Let 
\[S(k):=(s_1(\sigma(k)),s_2(\sigma(k)),s_3(\sigma(k)))\]
be the vector of the ratios of different colours in $\sigma(k)$. It is shown in \cite{PeresPotts} that $S(k)$ is also a Markov chain, called the \emph{magnetisation chain}. We call the state space of this chain $\Omega^{S}$. In order to understand the geometry of $\Omega^{S}$, we will think of each point of it in barycentric coordinates.  We will call by $T:=\{(s_1,s_2,s_3): 0\le s_1,s_2,s_3\le 1, s_1+s_2+s_3=1\}$ the \emph{main triangle}. On Figure \ref{fig4modes}, we illustrate this triangle with an equilateral triangle with side length 1. The centroid of this triangle is point $C_4$, and we denote the 3 vectors pointing from $C_4$ to 
the three corners of an equilateral triangle by $e_1$, $e_2$ and $e_3$ (each of them has length $\frac{\sqrt{3}}{3}$). Then to each point $s=(s_1,s_2,s_3)\in T$, the sum $C(s):=s_1 e_1+s_2 e_2+s_3 e_3$ is the corresponding two dimensional vector.  We define the distance of two points $s, s'\in T$, denoted by $d(s,s')$, the Euclidean distance of $C(s)$ and $C(s')$, which can be rewritten as
\begin{align}\label{dsspdefeq}
&d(s,s'):=\|(s_1-s_1') e_1+(s_2-s_2') e_2+(s_3-s_3') e_3\|\\
\nonumber&=\sqrt{\left<(s_1-s_1') e_1+(s_2-s_2') e_2+(s_3-s_3') e_3, (s_1-s_1') e_1+(s_2-s_2') e_2+(s_3-s_3') e_3\right>}\\
\nonumber&=\frac{1}{\sqrt{2}} \sqrt{(s_1-s_1')^2+(s_2-s_2')^2+(s_3-s_3')^2}.
\end{align}

The main triangle $T$ is divided into 4 \emph{subtriangles}, $T_i:=\{(s_1,s_2,s_3): s_i\in (1/2,1], 0\le s_1,s_2,s_3\le 1, s_1+s_2+s_3=1\}$ for $i=1,2,3$, and $T_4:=\{(s_1,s_2,s_3): 0\le s_1,s_2,s_3\le 1/2, s_1+s_2+s_3=1\}$. Then it is easy to see that the these are equilateral triangles, with centroid $C_i$ for $T_i$, for $1\le i\le 4$. We define the distance between a point $s\in T$ to the closes of the centres $C_1,\ldots, C_4$ as 
\begin{equation}\label{dCsdefeq}
d_C(s):=\min(d(s,C_1),d(s,C_2),d(s,C_3),d(s,C_4)).
\end{equation}
Then one can  see that for every $1\le i\le 4$, $s\in T_i$, we have $d_C(s)=d(s,C_i)$.
\begin{figure}
\centering
\includegraphics[height=5cm, bb=0 -1 498 456]{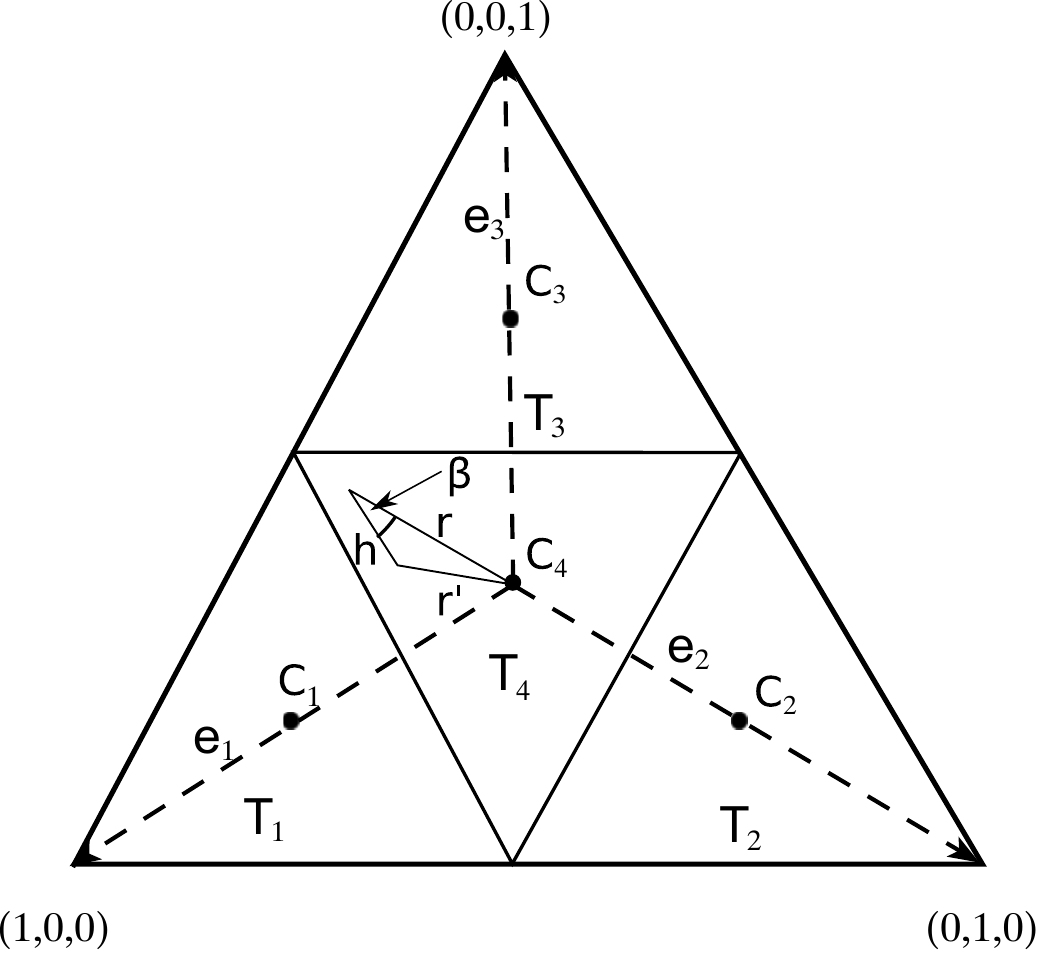}
\caption{Position of modes in barycentric coordinates}
\label{fig4modes}
\end{figure}

We define the modes in the following way. For $0\le j\le 10^7$, we set $m(j):=1$, and $F_{j}^{(1)}:=\Omega$. For $10^7\le j\le M$, we set $m(k):=4$, and 
\begin{align}\label{eqdefmodes1}
F_j^{(i)}&:=\left\{x\in \Omega: \frac{\sum_{k=1}^{j}\II[x_k=i]}{j}>\frac{1}{2}\right\}\text{ for }i =1, 2, 3,\text{ and}\\
\label{eqdefmodes2}F_j^{(4)}&:=\Omega\setminus\left(F_j^{(1)}\cup F_j^{(2)}\cup F_j^{(3)}\right).
\end{align}
Thus we compute the ratio of the spins of each colour among the first $k$ spins, look at which triangle this ratio vector falls into on Figure \ref{fig4modes}, and assign them to the corresponding mode. Now we proceed with the definition of the inner regions. For $j\le 10^7$, choose a single inner region as $\I_j^{(1)}:=\Omega$, and the border region is an empty set. For $10^7<j\le M$, and $1\le i\le 4$, we define the inner regions as 
\begin{equation}\label{innerregiondefeq}
\I_j^{(i)}:=\{\sigma\in \Omega: -\frac{\rho}{4}\le s_k(\sigma_{1:j})-C_{i,k}\le \frac{\rho}{2} \text{ for }k=1,2,3\} \quad\text { with }\quad \rho:=10^{-6},
\end{equation}
where $s_k(\sigma_{1:j}):=(\sum_{l=1}^{j} 1_{[\sigma_l=k]})/j$ is the ratio of spins of colour $k$ among the first $j$ spins. The points $(s_1(\sigma_{1:j}), s_2(\sigma_{1:j}),s_3(\sigma_{1:j}))$ for $\sigma\in \I_j^{(i)}$ fall in a small equilateral triangle centered at $C_i$ with sides parallel to the sides of $T$. The fact that $j>10^7$ ensures that the inner regions are non-empty. The border regions are defined as $\B_{j}^{(i)}:=F_j^{(i)}\setminus \I_j^{(i)}$ for $1\le i\le 4$.

\subsection{Key propositions and proof of Theorem \ref{thmvarPotts}}\label{seckeypropsandproof}
In this section, we state 5 key propositions bounding the various terms in Theorems \ref{thmwithmixing1} and \ref{thmwithmixing2}, and then prove  Theorem \ref{thmvarPotts} based on them. Our first proposition bounds the maximal density ratio constant $\Gamma_g$.
\begin{proposition}\label{keyprop1}
The maximal density constant $\Gamma_g:=\max_{0\le k\le n-1}\max_{x\in \Omega} \frac{\mmu_{k+1}(x)}{\mmu_{k}(x)}$ satisfies that $\Gamma_g\le \exp(2\tbeta_c)$.
\end{proposition}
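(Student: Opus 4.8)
The plan is to bound the Radon–Nikodym derivative $g_{k,k+1}(\sigma) = \mmu_{k+1}(\sigma)/\mmu_k(\sigma)$ explicitly, using the description of the interpolation to independence sequence from Section \ref{secinterpolationindependence}. Recall that under $\mmu_k$ the first $k$ spins follow the law of the Potts model on $k$ spins, $\mmu_{\tbeta_c,k}^{\mathrm{Potts}}$, while spins $k+1,\dots,M$ are i.i.d. uniform on $\{1,2,3\}$; similarly for $\mmu_{k+1}$ with the first $k+1$ spins following $\mmu_{\tbeta_c,k+1}^{\mathrm{Potts}}$ and the remaining $M-k-1$ uniform. Writing $\sigma = (\sigma_{1:k+1},\sigma_{k+2:M})$, the uniform factors on coordinates $k+2,\dots,M$ cancel in the ratio, so it suffices to compare the laws of $\sigma_{1:k+1}$: the numerator is $\mmu_{\tbeta_c,k+1}^{\mathrm{Potts}}(\sigma_{1:k+1})$ and the denominator is $\mmu_{\tbeta_c,k}^{\mathrm{Potts}}(\sigma_{1:k}) \cdot \tfrac13$, the extra $\tfrac13$ coming from the uniform law of the single coordinate $\sigma_{k+1}$ under $\mmu_k$.

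**Next I would** write both Potts factors explicitly via their energy form. For a configuration $\tau$ on $\ell$ spins, $\mmu_{\tbeta_c,\ell}^{\mathrm{Potts}}(\tau) = \exp\!\big(\ell\,\tbeta_c\,(s_1(\tau)^2+s_2(\tau)^2+s_3(\tau)^2)\big)/\tilde Z_\ell(\tbeta_c)$. Hence
\begin{align*}
g_{k,k+1}(\sigma)
\;=\;
3\cdot\frac{\tilde Z_k(\tbeta_c)}{\tilde Z_{k+1}(\tbeta_c)}\cdot
\exp\!\Big(\tbeta_c\big[(k+1)\textstyle\sum_r s_r(\sigma_{1:k+1})^2 - k\sum_r s_r(\sigma_{1:k})^2\big]\Big).
\end{align*}
The key elementary fact is that for every configuration, $\sum_r s_r^2 \in [1/3,1]$, since the $s_r$ are nonnegative and sum to $1$. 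Therefore the bracket in the exponent satisfies $(k+1)\sum_r s_r(\sigma_{1:k+1})^2 - k\sum_r s_r(\sigma_{1:k})^2 \le (k+1)\cdot 1 - k\cdot\tfrac13 \le k+1$ trivially, but more carefully one wants a bound uniform in $k$; the cleanest route is to bound the exponent difference directly. Indeed, adding one spin changes the counts $N_r = \sum_{l\le \ell}\II[\tau_l=r]$ by at most one in a single coordinate, and a short computation gives $\big|(k+1)\sum_r s_r(\sigma_{1:k+1})^2 - k\sum_r s_r(\sigma_{1:k})^2\big| \le 2$, since $\ell\sum_r s_r^2 = \tfrac1\ell\sum_r N_r^2$ and $\tfrac{1}{\ell+1}\sum_r (N_r+e_i)_r^2 - \tfrac1\ell \sum_r N_r^2$ telescopes to something bounded by $2$ in absolute value. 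This yields $g_{k,k+1}(\sigma) \le 3\,\frac{\tilde Z_k}{\tilde Z_{k+1}}\,e^{2\tbeta_c}$, and one still needs to control the partition-function ratio.

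**The main obstacle** will be the ratio $\tilde Z_k(\tbeta_c)/\tilde Z_{k+1}(\tbeta_c)$. I expect this is handled by noting that $\tilde Z_{k+1}(\tbeta_c) = \sum_{\tau\in\{1,2,3\}^{k+1}} e^{(k+1)\tbeta_c\sum_r s_r(\tau)^2}$ and grouping the sum over the last coordinate: for each of the $3$ choices of $\tau_{k+1}$, the exponent satisfies $(k+1)\tbeta_c\sum_r s_r(\tau_{1:k+1})^2 \ge k\tbeta_c\sum_r s_r(\tau_{1:k})^2 - (\text{a uniformly bounded term})$, whence $\tilde Z_{k+1}(\tbeta_c) \ge 3\, e^{-c}\,\tilde Z_k(\tbeta_c)$ for some absolute $c$; combined with the matching upper bound this shows $\tilde Z_k/\tilde Z_{k+1}$ is bounded by an absolute constant times $1/3$. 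Actually, for the stated bound $\Gamma_g \le \exp(2\tbeta_c)$ to come out this clean, the expectation is that the $3$ in the numerator and the $3$ from summing over $\tau_{k+1}$ cancel exactly, and the exponent terms cancel in the ratio up to the claimed $\exp(2\tbeta_c)$ slack, possibly after using $\sum_r s_r^2\le 1$ on the numerator side and $\sum_r s_r^2 \ge 1/3$ is not even needed. So the plan concretely is: (i) reduce to the first $k+1$ coordinates by cancelling uniform factors; (ii) express the ratio in terms of Potts weights and partition functions; (iii) bound the change of the quadratic form $\ell\sum_r s_r^2$ under adding one spin by a constant, giving the exponential factor $e^{2\tbeta_c}$; (iv) show the $3\tilde Z_k/\tilde Z_{k+1}$ factor is $\le 1$ by writing $\tilde Z_{k+1}$ as a sum over the extra coordinate and bounding below; (v) conclude $\Gamma_g \le \exp(2\tbeta_c)$ by taking the max over $k$ and $\sigma$. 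Step (iii)–(iv) is where the only real care is needed; everything else is bookkeeping.
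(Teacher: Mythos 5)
Your setup matches the paper's: reduce to the first $k+1$ coordinates, and write $\mmu_{k+1}(x)/\mmu_k(x)$ as a partition-function ratio times $\exp\bigl(\tbeta_c\,\Delta_k(x)\bigr)$, where $\Delta_k(x)=\frac{1}{k+1}\sum_r n_r^2(x_{1:k+1})-\frac{1}{k}\sum_r n_r^2(x_{1:k})$. But there is a genuine gap in steps (iii)--(iv). First, the sharp bound is $|\Delta_k(x)|\le 1$, not $2$: writing $\Delta_k=\frac{2n_i+1}{k+1}-\frac{\sum_r n_r^2}{k(k+1)}$ and using $n_i^2\le\sum_r n_r^2\le k^2$ gives exactly $[-1,1]$. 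Second, and more importantly, your plan to show $3\tilde Z_k/\tilde Z_{k+1}\le 1$ is not carried out: the only argument you sketch (group $\tilde Z_{k+1}$ over the last coordinate and lower-bound each exponent by the worst case) yields $\tilde Z_{k+1}\ge 3e^{-c}\tilde Z_k$ with $c$ equal to your exponent bound, hence $3\tilde Z_k/\tilde Z_{k+1}\le e^{c}$, and combining with the pointwise factor $e^{c\tbeta_c}$ gives only $\Gamma_g\le e^{2c\,\tbeta_c}$, i.e.\ $e^{4\tbeta_c}$ with your constant $2$ (or $e^{3\tbeta_c}\cdot$\,nothing better than $e^{2\tbeta_c}$ even with $c=1$... in fact $e^{2\tbeta_c}$ only if the ratio were exactly $\le 1$). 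The hoped-for exact cancellation $\tilde Z_{k+1}\ge 3\tilde Z_k$ is a nontrivial claim (a crude AM--GM over the three choices of the added spin does not prove it) and is left as wishful thinking.

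The idea you are missing is the paper's normalization trick, which makes the partition functions disappear for free. Once you have the two-sided pointwise sandwich
\begin{align*}
a\,e^{-\tbeta_c}\;\le\;\frac{\mmu_{k+1}(x)}{\mmu_{k}(x)}\;\le\;a\,e^{\tbeta_c},
\qquad a:=\frac{Z_k(\tbeta_c)}{Z_{k+1}(\tbeta_c)},
\end{align*}
summing the left inequality against $\mmu_k$ and using $\sum_x\mmu_{k+1}(x)=\sum_x\mmu_k(x)=1$ gives $1\ge a\,e^{-\tbeta_c}$, i.e.\ $a\le e^{\tbeta_c}$, and therefore $\Gamma_g\le a\,e^{\tbeta_c}\le e^{2\tbeta_c}$. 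Note this requires the \emph{lower} bound on the ratio as well as the upper bound, and it requires the sharp constant $1$ in the exponent: with your bound of $2$ the same trick only delivers $\Gamma_g\le e^{4\tbeta_c}$. So to reach the stated constant you need both the tighter elementary estimate on $\Delta_k$ and the integrate-to-one argument in place of any direct analysis of $\tilde Z_k/\tilde Z_{k+1}$.
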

\begin{proof}
Let us denote the number of spins of colours $1,2,3$ among $x_1,\ldots,x_k$ by $n_1(x_{1:k})$, $n_2(x_{1:k})$ and $n_3(x_{1:k})$, respectively. Then 
\begin{align*}
\frac{\mmu_{k+1}(x)}{\mmu_{k}(x)}=\frac{Z_{k}(\tbeta_c)}{Z_{k+1}(\tbeta_c)}
\cdot \exp\Bigg(\tbeta_c\bigg(&\frac{n_1^2(x_{1:k+1})+n_2^2(x_{1:k+1})+n_3^2(x_{1:k+1})}{k+1}\\
&-\frac{n_1^2(x_{1:k})+n_2^2(x_{1:k})+n_3^2(x_{1:k})}{k}\bigg)\Bigg).\end{align*}
Now it is easy to show that 
\[\left|\frac{n_1^2(x_{1:k+1})+n_2^2(x_{1:k+1})+n_3^2(x_{1:k+1})}{j+1}-\frac{n_1^2(x_{1:k})+n_2^2(x_{1:k})+n_3^2(x_{1:k})}{k}\right|\le 1,\]
thus 
\[\frac{Z_{k}(\tbeta_c)}{Z_{k+1}(\tbeta_c)}\exp(-\tbeta_c)\le \frac{\mmu_{k+1}(x)}{\mmu_{k}(x)}\le \frac{Z_{k}(\tbeta_c)}{Z_{k+1}(\tbeta_c)}\exp(\tbeta_c).\]
The fact that $\sum_{x\in \Omega}\mmu_{k}(x)=\sum_{x\in \Omega} \mmu_{k+1}(x)=1$ implies that $\frac{Z_{k}(\tbeta_c)}{Z_{k+1}(\tbeta_c)}\le \exp(\tbeta_c)$, and thus $\Gamma_g\le \exp(2\tbeta_c)$.
\end{proof}
Our second result bounds the growth-within-mode constants $B_{j,j+1}$ defined in \eqref{growthwithinmodedefeq}.
\begin{proposition}[Bounds on the growth-within-mode constants]\label{keyprop2}
\hspace{0mm}\\We have $B_{0,1}=B_{1,2}=1$, and for any $2\le j\le M-1$,
\[B_{j,j+1}\le 1+C\cdot \frac{\log(j)^{5}}{j^{3/2}}\]
for some absolute constant $C>0$.
\end{proposition}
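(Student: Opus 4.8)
The plan is to bound the ratio $\mmu_{j+1}(F_j^{(r)})/\mmu_j(F_j^{(r)})$ for each mode $F_j^{(r)}$, $1 \le r \le m(j)$, and each relevant index $j$. For $j \le 1$ there is a single mode equal to $\Omega$, so both measures assign mass one and the ratio is exactly $1$; this gives $B_{0,1} = B_{1,2} = 1$ immediately. For $2 \le j \le M-1$, the key observation is that $F_j^{(r)}$ is an event that depends only on the first $j$ coordinates $\sigma_{1:j}$ (it is the event that the empirical colour frequencies of $\sigma_{1:j}$ lie in a given subtriangle), while both $\mmu_j$ and $\mmu_{j+1}$, restricted to the first $j$ coordinates, are (after integrating out the i.i.d.\ uniform tail) explicitly described mean-field Potts-type measures on $\{1,2,3\}^j$ differing only through the presence of one extra interacting coordinate $\sigma_{j+1}$ and a change of normalising constant.

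The core computation is therefore to write $\mmu_{j+1}(F_j^{(r)})/\mmu_j(F_j^{(r)})$ as a ratio of partition-function-type sums over configurations $\sigma_{1:j}$ with empirical vector in $T_r$, and to control how adding coordinate $j+1$ reweights these configurations. Concretely, summing out $\sigma_{j+1} \in \{1,2,3\}$ given $\sigma_{1:j}$ produces a factor proportional to $\sum_{c=1}^3 \exp\!\big(\tbeta_c \, \psi_j(\sigma_{1:j}, c)\big)$ where $\psi_j$ measures the change in $\big(n_1^2 + n_2^2 + n_3^2\big)/(\,\cdot\,)$ when the new spin takes colour $c$; a Taylor expansion in $1/j$ of the exponents (the increments of $(n_1^2+n_2^2+n_3^2)/k$ are $O(1/j)$ uniformly, as already used in the proof of Proposition \ref{keyprop1}) shows that this factor equals a deterministic leading term times $\big(1 + O(1/j)\big)$, with the fluctuating part depending on $\sigma_{1:j}$ only through how far its empirical vector sits from the centroid $C_r$. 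Because on $F_j^{(r)}$ the magnetisation is pinned into the subtriangle $T_r$, the relevant deviations are controlled; combining this with a concentration estimate showing that under $\mmu_j(\cdot \mid F_j^{(r)})$ the empirical vector is within distance $O(\sqrt{\log j / j})$ of $C_r$ with overwhelming probability (a Gaussian-type fluctuation around the local mode, which can be extracted from the preliminary results of Section \ref{secpreliminaryPotts} or a direct Laplace/Stirling estimate on the multinomial coefficients times the exponential weight) upgrades the pointwise $1 + O(1/j)$ bound on the reweighting factor to the claimed $1 + C \log(j)^{3/2}/j^{3/2}$ on the ratio of masses — the extra $\log^{3/2}$ coming from the typical size of the squared magnetisation fluctuation, which enters the exponent multiplied by $\tbeta_c$ and by a $1/j$ from the Taylor expansion, giving a correction of order $(\log j / j) \cdot (1/j)^{1/2}$ type after carefully tracking which terms survive the normalisation.

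The main obstacle, and the step requiring the most care, is the cancellation in the normalising constants: both $\mmu_j$ and $\mmu_{j+1}$ carry their own partition functions $\tilde Z_j, \tilde Z_{j+1}$, and the naive bound $\Gamma_g \le \exp(2\tbeta_c)$ is far too lossy here — one needs that $\tilde Z_{j+1}/\tilde Z_j$ matches the \emph{mode-averaged} reweighting factor to within $1 + O(\log^{3/2} j / j^{3/2})$, so that what remains in $B_{j,j+1}$ is only the \emph{discrepancy between modes} of the local reweighting, not its absolute size. This forces one to expand the reweighting factor to second order around each centroid $C_r$ and to verify that the first-order terms either vanish (by the stationarity of the free energy at the local maxima $C_r$) or cancel against the normalisation, leaving a genuinely $o(1/j)$-type residual per mode whose sum over $j$ from $10^7$ to $M-1$ is bounded. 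I would carry this out by first establishing the exact reweighting identity, then the second-order Taylor expansion with explicit remainder, then the concentration lemma localising the magnetisation, and finally assembling these into the stated bound, being careful that all $O(\cdot)$ constants are uniform in $r$ and in $j \ge 2$.
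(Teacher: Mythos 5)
Your overall architecture is reasonable --- sum out $\sigma_{j+1}$ to get a one-step reweighting factor, note that the partition-function ratio is pinned down by $\sum_r \mmu_j(F_j^{(r)})\,\mmu_{j+1}(F_j^{(r)})/\mmu_j(F_j^{(r)})=1$, so that only the \emph{discrepancy across modes} of the conditional expectation of the reweighting factor matters --- and the case $B_{0,1}=B_{1,2}=1$ is immediate since those stages have a single mode equal to $\Omega$. But the decisive step is asserted rather than proved, and the heuristic you give for the rate does not produce it. Writing $W_j(s)=\sum_c\exp\bigl(\tbeta_c[(j+1)\|s^{+c}\|^2-j\|s\|^2]\bigr)$ and $\E_r$ for expectation under $\mmu_j(\cdot\mid F_j^{(r)})$, two cancellations are needed. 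First, the limits $\lim_j W_j(C_r)$ must coincide for all four modes; this holds only because at $\tbeta=\tbetac$ all four maxima of the limiting free energy $\LL$ of Lemma \ref{lemmaL} have equal height, and without it $B_{j,j+1}$ would be bounded away from $1$ --- you do not flag this. Second, and this is the crux: the expectation of the second-order term $\tfrac12(s-C_r)^{\top}\nabla^2W_j(C_r)(s-C_r)$ is genuinely $\Theta(1/j)$, because the covariance of $s$ under $\mmu_j(\cdot\mid F_j^{(r)})$ is $\Theta(1/j)$ (the radius $\sqrt{\log j/j}$ only controls tails), and its coefficient involves the Hessians of $W$ and of the free energy at $C_r$, which need not coincide between the central mode and the three outer ones. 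Your claim that the second-order expansion leaves ``a genuinely $o(1/j)$-type residual per mode'' is therefore exactly the statement that these mode-dependent $1/j$ coefficients are all equal; that is the entire content of the proposition, and it follows neither from stationarity of $\LL$ at $C_r$ nor from concentration. If one only knows $\E_r[W_j]=A\,(1+b_r/j+o(1/j))$ with mode-dependent $b_r$, the argument yields only $B_{j,j+1}\le 1+C/j$, whose product over $j$ is of order $M^{C}$ rather than bounded.

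The paper closes this gap by computing a two-term Laplace expansion of the absolute mode masses, $\mmu_j(F_j^{(r)})=w_r+w_r'/j+\OO(j^{-3/2}\log^3 j)$ uniformly in $j$, via a non-asymptotic Stirling bound, a fourth-order Taylor expansion of the log-likelihood around each centroid, and the Poisson-summation estimate of Lemma \ref{lemmaRiemannapprox} for the Riemann-sum-to-Gaussian-integral error. The point is that once such an expansion is available, the consecutive ratio equals $1-\tfrac{w_r'/w_r}{j(j+1)}+\OO(j^{-3/2}\log^3 j)=1+\OO(j^{-3/2}\log^3 j)$ \emph{regardless} of the mode-dependence of $w_r'$: the dangerous $1/j$ coefficients appear identically in numerator and denominator and cancel to order $j^{-2}$ (an exponentially small correction then handles the change of partition from $F_j^{(r)}$ to $F_{j+1}^{(r)}$). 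To repair your one-step argument you would have to carry out essentially this same second-order Laplace computation at each of the four centroids to identify the $1/j$ coefficient of $\E_r[W_j]$ and verify it is mode-independent; the computation is repackaged, not avoided. Note also that your bookkeeping $(\log j/j)\cdot j^{-1/2}$ gives $\log(j)/j^{3/2}$, the statement asks for $\log(j)^{3/2}/j^{3/2}$, and the paper's expansion actually delivers $\log(j)^{3}/j^{3/2}$; any of these is summable, but the mismatch is a further sign that the error tracking in the sketch is not yet under control.
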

The proof of this result, based on Taylor expansions, is quite technical, with no probabilistic ingredient, so it is included in Section \ref{secproofpropgrowthwithinmodePotts} of the Appendix.
The third proposition bounds the time needed to approach one of the centers. The proof is included in Section \ref{secproofprop2}.
\begin{proposition}[Time to get to a central region]\label{keyprop4}
Let $\{S(t)\}_{t\ge 0}$ be the magnetisation chain on $\Omega^S$. Let 
\[\tau:=\inf\left\{k\in \N: d_C(S(k))\le \frac{\rho}{8}\right\},\]
that is, the first time we get closer than $\frac{\rho}{8}$ to one of the centers. Then for any initial position $s\in \Omega^{S}$, any $r\in \R_+$,
\[\PP(\tau> r\cdot C M\log(M)|S(0)=s)\le \exp\left(-\lfloor r\rfloor\right),\]
where $C>0$ is an absolute constant.
\end{proposition}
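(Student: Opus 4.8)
The plan is to establish a one-step supermartingale/drift estimate for the distance function $d_C(S(k))$ toward the nearest centre, and then convert it into the exponential tail bound via a standard ``expected hitting time plus Markov inequality, iterated over blocks'' argument. First I would fix the starting state $s$ and run the magnetisation chain $S(k)$; the key analytic input, which should come from the drift lemma of Section \ref{secdrift}, is that whenever $d_C(S(k)) > \rho/2$ there is a strict expected decrease of $d_C$ of order at least $c/M$ (the chain is a mean-field Glauber dynamics, so each step moves one of the $M$ coordinates, producing an $O(1/M)$ drift toward the nearest local mode; at $\tbeta = \tbeta_c$ the four centres $C_1,\dots,C_4$ are exactly the attracting fixed points of the associated mean vector field, so outside the $\rho/2$-balls the drift is bounded away from zero). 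I would combine this with a uniformly bounded per-step displacement (each step changes $d_C$ by at most $O(1/M)$, so the ``diffusive'' fluctuation is controlled), which yields that $d_C(S(k))$ is, up to the $O(1/M)$ order terms, a supermartingale with a uniform negative drift while $\tau$ has not occurred.

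Next I would package this into an expected-hitting-time bound. Since $d_C \le \mathrm{diam}(T) = O(1)$ always and the drift per step is $\ge c/M$ outside the target ball, an optional-stopping / Dynkin-type argument gives $\E[\tau \mid S(0)=s] \le C' M$. To upgrade this to the $\log M$ factor and the exponential tail, I would intersect with the fluctuation control: standard arguments (e.g. a Lyapunov function $e^{\lambda d_C}$ with $\lambda \asymp M$, or an Azuma/Freedman bound on the martingale part over a window of $C M \log M$ steps) show that with probability $\ge 1 - e^{-1}$ the chain reaches $d_C \le \rho/2$ within $C M \log M$ steps, uniformly in the starting point $s$; the extra $\log M$ absorbs the possibility that the chain starts near the boundary where the drift, though positive, must still carry the chain a macroscopic distance against $O(\sqrt{\log M}/\sqrt M)$-scale fluctuations, and ensures the failure probability is genuinely small rather than merely bounded. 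Then the Markov property at the end of each length-$CM\log M$ block gives, by independence of the blocks' success events conditional on their starting points,
\[
\PP\big(\tau > r \cdot C M \log M \mid S(0)=s\big) \le (e^{-1})^{\lfloor r \rfloor} = \exp(-\lfloor r\rfloor),
\]
which is the claimed bound.

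The main obstacle I anticipate is making the drift estimate genuinely \emph{uniform} over all starting points $s \in \Omega^S$ and over the whole region $\{d_C > \rho/2\}$, in particular near the ``separatrices'' between the basins of attraction of the four centres $C_1,\dots,C_4$ and near the corners of the main triangle $T$. At such points the deterministic mean-field vector field is small or ill-conditioned, so the one-step drift of $d_C$ could degenerate; one has to check that the definition $d_C(s) = \min_i d(s, C_i)$ — a minimum of smooth functions — still has a usable drift (the minimum structure actually helps, since moving toward \emph{any} centre decreases $d_C$), and that the $\rho = 10^{-6}$ choice is small enough that the bad neighbourhoods of the separatrices are escaped quickly. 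This is exactly the kind of computation the drift lemma in Section \ref{secdrift} is presumably designed to handle, so I would reduce Proposition \ref{keyprop4} to that lemma plus the generic block-iteration scheme above, keeping the probabilistic content concentrated in the hitting-time argument and deferring the explicit Taylor/geometry estimates to Section \ref{secdrift}.
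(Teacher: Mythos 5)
There is a genuine gap: your argument rests on the assumption that the one-step drift of $d_C$ is uniformly of order $-c/M$ on the whole region $\{d_C>\rho/2\}$, and this is false. The drift bound of Lemma \ref{worstdriftlemma} is $-\varphi(d_C(s))/M$ with $\varphi(\sqrt{3}/12)=0$: the drift vanishes on the ring $\{d_C=\sqrt{3}/12\}$, which contains the saddle points $C_{14},C_{24},C_{34}$ midway between the centres (and, because of the $O(1/M^2)$ error term, the drift can even have the wrong sign there). You flag exactly this separatrix issue as your ``main obstacle'' but then defer it to the drift lemma of Section \ref{secdrift}; that lemma cannot resolve it, because the vanishing of the drift there is a real feature of the landscape, not an artefact of the estimate. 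In particular your claims that optional stopping gives $\E[\tau]\le C'M$ and that each block of length $CM\log M$ succeeds with probability $\ge 1-e^{-1}$ both fail: a pure drift argument cannot carry the chain across a region where the drift is zero.

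The paper's proof supplies the missing mechanism. It sets up a ladder of distances $d_0>\dots>d_m$ with spacing $c_2/\sqrt{M}$ and treats the annulus of width $\approx 2c_1\sqrt{\log(M)/M}$ around $\sqrt{3}/12$ separately: there it invokes the variance lower bound of Lemma \ref{lemmaminvar} together with the anti-concentration Lemma \ref{lemmaanticoncentration}, so the chain crosses the zero-drift ring \emph{diffusively} in $O(M\log M)$ steps with probability only at least a constant $c_4>0$ (not with high probability). On the remaining annuli, where $\varphi(d_j)$ is bounded below, it uses Lemma \ref{lemmamoving}, but only after a coupling construction (the auxiliary sequences $D^{(j)}$) that localises the drift hypothesis to each annulus --- another step your outline skips, since the drift bound does not hold uniformly in $k$ and the Azuma-type inequalities cannot be applied to $d_C(S(k))$ directly. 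The final exponential tail then comes, as you correctly anticipate, from iterating over blocks via the Markov property, with the block success probability $c_4/2$ absorbed into the constant $C$. To repair your proposal you would need to add the anti-concentration crossing of the $\sqrt{3}/12$ ring and the localisation coupling; without them the argument does not go through.
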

The fourth proposition bounds the total variational distance from the local distribution in the modes. The proof is included in Section \ref{secproofprop3}.
\begin{proposition}\label{keyprop5}
Suppose that $k\ge 330M\log(M)$. Let $P$ denote the Markov kernel of the Glauber dynamics on $\mmu_M$. Then for $M$ larger than some absolute constant, for any $\sigma\in 
\I_M^{(i)}$ for some $1\le i\le 4$, we have
\[
\dtv(\P^k(\sigma, \cdot), \mmu_M^{(i)})\le \frac{C}{M^2}+3k^2 \exp(-C'\cdot M),
\]
where $C,C'>0$ are some absolute constants.
\end{proposition}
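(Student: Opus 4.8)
The plan is to interpose, between $\P^{k}(\sigma,\cdot)$ and the target $\mmu_M^{(i)}$, the Glauber dynamics \emph{restricted to the mode} $F_M^{(i)}$. Let $\P^{(i)}$ denote the kernel obtained from $\P$ by rejecting --- i.e.\ staying put at --- every proposed single-spin update that would move the configuration out of $F_M^{(i)}$. Being a rejection-restriction of the $\mmu_M$-reversible kernel $\P$, the kernel $\P^{(i)}$ is reversible with respect to $\mmu_M^{(i)}$. Running $\P$ from $\sigma$ and $\P^{(i)}$ from $\sigma$ with a common spin index and common update variable at each step, the two trajectories coincide until the first time $\tau_{\mathrm{exit}}$ at which the $\P$-chain leaves $F_M^{(i)}$; hence, by the triangle inequality,
\[
\dtv\BK{\P^{k}(\sigma,\cdot),\mmu_M^{(i)}}
\;\le\;
\PP_{\sigma}\BK{\tau_{\mathrm{exit}}\le k}
\;+\;
\dtv\BK{(\P^{(i)})^{k}(\sigma,\cdot),\mmu_M^{(i)}}.
\]
It then suffices to bound the first (escape) term by $3k^{2}\exp(-C'M)$ and the second (mixing) term by $C/M^{2}$.

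For the escape term, recall from \cite{PeresPotts} that the magnetisation process $S(t)$ is a Markov chain with increments of size $1/M$, and that by the drift lemma of Section~\ref{secdrift} it experiences, throughout the subtriangle $T_i$, a drift of order $1/M$ directed towards the centre $C_i$. Starting from $\sigma\in\I_M^{(i)}$ the chain $S$ lies within distance $\rho$ of $C_i$, whereas leaving $F_M^{(i)}$ forces $S$ to reach the separating line $\{s_i=1/2\}$, which is at distance of order one, against this drift. Decomposing $[0,k]$ into the (at most $k$) excursions of $S$ away from $\I_M^{(i)}$ and estimating each one by a gambler's-ruin bound for a biased walk with $O(1/M)$ increments that has to cross an $O(1)$ interval against an inward drift --- an event of probability $\exp(-cM)$ for an explicit $c>0$ depending on $\rho$ --- a union bound gives $\PP_\sigma(\tau_{\mathrm{exit}}\le k)\le 3k^{2}\exp(-C'M)$. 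The polynomial-in-$k$ prefactor is immaterial, since in the application $k=\OO(M\log^{2}M)$, so this term stays far below $M^{-2}$.

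The substance of the proof is the bound $\dtv\BK{(\P^{(i)})^{k}(\sigma,\cdot),\mmu_M^{(i)}}\le C/M^{2}$ for $k\ge 330\,M\log M$. A soft estimate through the spectral gap is not enough: single-site Glauber on a mean-field model has relaxation time $\Theta(M)$, so submultiplicativity only yields mixing to accuracy $M^{-2}$ after $\Theta(M\log^{2}M)$ steps, which for large $M$ exceeds $330\,M\log M$; and the usual $\ell^{2}$ bounds carry a prefactor of order $e^{\Theta(M)}$ (essentially the reciprocal square root of $\min_{\sigma}\mmu_M^{(i)}(\sigma)$, since the support has size $3^{\Theta(M)}$) that cannot be absorbed. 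Instead I would adapt the two-stage coupling used in \cite{PeresPotts}. First, couple the two magnetisation chains: inside $T_i$ the inward drift makes the distance between two copies contract at rate $\approx c/M$ per step, so they coalesce after $\OO(M\log M)$ steps with a Gaussian-type upper tail on the coupling time, and one checks that both copies --- started near $C_i$ --- remain inside $T_i$, so the restriction never interferes. Second, conditionally on equal magnetisation, couple the configurations themselves: since $\mmu_M$ conditioned on a fixed colour-count vector is the uniform measure on the corresponding level set, a monotone matching coupling makes the number of discordant coordinates a supermartingale reaching zero in a further $\OO(M\log M)$ steps at a coupon-collector rate. Because the cutoff window is only $\OO(M)$, evaluating the resulting coupling-time bound at the generously large value $330\,M\log M$ produces a total variation distance of order $\exp(-c(\log M)^{2})$, comfortably below $C/M^{2}$ once $M$ is larger than an absolute constant.

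The main obstacle is exactly this last step. One cannot appeal to any spectral estimate, because reaching, in $\OO(M\log M)$ steps, accuracy $M^{-2}$ in total variation from the \emph{local} stationary law when started deep inside a mode is a cutoff-type statement; it forces the explicit two-stage coupling (contracting the magnetisation gap, then coalescing the configuration at fixed magnetisation), together with the bookkeeping needed to keep the coupled magnetisation chains inside $T_i$ and to preserve the contraction estimate near $\partial T_i$.
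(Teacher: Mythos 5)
Your overall architecture (interpose a restricted dynamics, bound the escape probability, then bound the mixing of the restricted chain by a two-stage coupling: first the magnetisation, then the spins at fixed magnetisation) matches the paper's, and your escape estimate and your second coupling stage correspond to Proposition \ref{keyprop3} and Lemma \ref{lemmacinmagtocinspinsPotts} respectively. The genuine gap is in the first coupling stage. You restrict the dynamics to the whole mode $F_M^{(i)}$ and assert that ``inside $T_i$ the inward drift makes the distance between two copies contract at rate $\approx c/M$ per step.'' Contraction of the distance between two coupled copies is a curvature statement, not a drift statement, and it fails here: the paper explicitly records (in Section \ref{secproofprop3}, just before Proposition \ref{propcurvature}) that the coarse Ricci curvature of the magnetisation chain is negative in parts of the state space and that this negative curvature ``still persists even for the local restrictions of the Markov kernel to the 4 modes,'' owing to the non-convexity of the log-density near the saddles separating the modes; note also that the drift function $\varphi$ of Lemma \ref{worstdriftlemma} vanishes at $d_C(s)=\sqrt{3}/12$, i.e.\ exactly on the boundary of $T_i$. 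The paper's way around this is to prove positive curvature only on the tiny neighbourhood $\Lambda^{(i)}$ of the centre (Proposition \ref{propcurvature}), run the Wasserstein contraction there, use the escape bound (Proposition \ref{keyprop3}) to show the unrestricted chain never notices the restriction to $\tilde{\Lambda}^{(i)}$, and then pay an extra $C_{\Lambda}/M^2$ through Lemma \ref{dtvLambdailemma} to replace $\mmu_{M|\tilde{\Lambda}^{(i)}}$ by $\mmu_M^{(i)}$. Your restriction to all of $F_M^{(i)}$ avoids that last comparison but leaves you with a coalescence claim that is false on part of your state space.

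A second, smaller omission: your stationary copy is distributed as $\mmu_M^{(i)}$, not ``started near $C_i$,'' so you must separately control the event that it begins outside the region where your coupling estimates hold. This is exactly where part of the $C/M^2$ term in the statement comes from, via the bound $\mmu_{M|\tilde{\Lambda}^{(i)}}\left(\tilde{\Lambda}^{(i)}\setminus\I_M^{(i)}\right)\le C_{\Lambda}/M^2$ of Lemma \ref{dtvLambdailemma}. As written your argument would instead yield the $\exp(-c(\log M)^2)$ you quote, which is not what happens: the $M^{-2}$ is genuinely the bottleneck, arising from the mass of the mode living outside the contraction region and from the Wasserstein-to-total-variation conversion $3\rho M\left(1-\frac{0.01}{M}\right)^{k_1}$ with $k_1=\lfloor 300 M\log M\rfloor$.
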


Now we state the proof of our variance bound based on these key propositions.
\begin{proof}[Proof of Theorem \ref{thmvarPotts}]
By \eqref{eq.asymp.var.expansion}, the asymptotic variance satisfies that $V_M(f)=$\\ $\sum_{j=0}^{M} V_{j,M}(f)$, with the terms $V_{j,M}(f)$ can be bounding by Theorem \ref{thmwithmixing1} as
\begin{align}\nonumber
V_{j,M}(f)
&\leq
\norm{f}^2_\infty \Gamma_g \,
\prod_{i=j+1}^{M-1}
\curBK{ B_{i,i+1}  + \Gamma_g \, \vertiii{\KK_{i} - \widehat{\ppi}_i}_\infty }.\\
&\leq \label{eqVjnwithmixingPotts}
\norm{f}^2_\infty \Gamma_g \,
\prod_{i=0}^{M-1}B_{i,i+1}\cdot 
\prod_{i=j+1}^{M-1}
\curBK{1  + \Gamma_g \, \vertiii{\KK_{i} - \widehat{\ppi}_i}_\infty }.
\end{align}
By Proposition \ref{keyprop1}, we have $\Gamma_g\le \exp(2\tbeta_c)$. Proposition \ref{keyprop2} implies that 
$\prod_{i=0}^{M-1}B_{i,i+1}\le C_B$ for some absolute constant $C_B<\infty$. 

Let us choose $t_i=\lceil R i \log(i)^2\rceil$ for some constant $R$, and $\KK_i:=\P_i^{t_i}$, where $\P_i$ is the Markov kernel described in Section \ref{secinterpolationindependence} (combining a Glauber dynamics step in the first $i$ coordinates $(\sigma_1,\ldots, \sigma_i)$ with respect to $\mmu_{\tbeta_c,k}^{\mathrm{Potts}}$ and replacing the rest of the coordinates $(\sigma_{i+1},\ldots, \sigma_M)$ by independent copies).
Based on Theorem \ref{thmwithmixing2}, we have
\begin{equation} \label{Ptpitboundpitts}
\begin{aligned}
\vertiii{\KK_{i} - \widehat{\ppi}_i}_\infty=\vertiii{ \P_i^{t_i} - \widehat{\ppi}^{(t_i)} }_{\infty}
&\leq 
\max_{x \in \B_{i}} \; \PP \BK{ \SB(x,\lfloor t_i/2\rfloor) } \\
&+2 \, \max_{1\leq l \leq 4} \, \max_{\lceil t_i/2\rceil \leq r \leq t_i}\, \sup_{x \in \I^{(l)}} \; \dtv \BK{ \P_i^{r}(x,\cdot),\mmu^{(l)}},
\end{aligned}
\end{equation}
where $\B_{i}:=\cup_{l}\B_{i} ^{(l)}$ is the union of the border regions. By using Propositions \ref{keyprop4} and \ref{keyprop5} (applied by substituting $M=i$ and using the fact that the rest of the spins are independent), by choosing $R$ sufficiently large, for $i$ larger than some absolute constant, we have
\begin{align}
&\max_{x \in \B_{i}} \; \PP \BK{ \SB(x,\lfloor t_i/2\rfloor) } \le \frac{1}{i^2},\text{ and}\\
&2 \, \max_{1\leq l \leq 4} \, \max_{\lceil t_i/2\rceil \leq r \leq t_i}\, \sup_{x \in \I^{(l)}} \; \dtv \BK{ \P_i^{r}(x,\cdot),\mmu^{(l)}}\le \frac{C'}{i^2},
\end{align}
for some absolute constant $C'<\infty$. The result now follows by \eqref{eqVjnwithmixingPotts}.
\end{proof}

\subsection{Preliminary results}\label{secpreliminaryPotts}
In this section, we will first prove some concentration results for sequences of random variables satisfying certain drift conditions. After these, we show a coupling argument for showing how good mixing in the magnetisation chain can be used to show good mixing in in the original Glauber dynamics chain. The proofs are included in Section \ref{Secprelimlemmaproof} of the Appendix.
\begin{lemma}[A lower bound on the exit time]\label{lemmaanticoncentration}
Assume that $(X_l)_{l\ge 0}$ is a sequence of random variables adapted to some filtration $(\F_l)_{l\ge 0}$. Suppose that
\begin{enumerate}
\item[(1)] $|X_{l+1} - X_{l}|\le R$ almost surely for every $l\ge 0$ for some absolute constant $R$,
\item[(2)] $\E(X_{l+1}-X_{l}|\F_l)\le \delta$ for some $\delta>0$,
\item[(3)] $X_0=x_0$ for some fixed constant $x_0\in \R$, and
\item[(4)] $\Var(X_{l+1}-X_l|\F_l)\ge v$ for every $t\ge 0$ for some absolute constant $v>0$.
\end{enumerate}

Suppose that $z\ge 12\frac{R}{\sqrt{v}}$, then 
\[\PP\left[\min_{0\le l\le 4(z+2R/\sqrt{v})^2} (X_l-x_0)\le -z\sqrt{v}+4(z+2R/\sqrt{v})^2\delta\right]\ge \frac{1}{6}.\]
\end{lemma}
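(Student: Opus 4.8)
The plan is to show that with constant probability the random walk-like sequence $X_l$ drifts downward by an amount of order $\sqrt{v l}$ below the level $x_0 + \delta l$. First I would reduce to a centered quantity: define $Y_l := X_l - X_0 - \sum_{j=0}^{l-1}\E(X_{j+1}-X_j\mid \F_j)$, the Doob martingale part, so that $Y_0 = 0$, $(Y_l)$ is an $(\F_l)$-martingale with bounded increments $|Y_{l+1}-Y_l| \le 2R$ a.s., and conditional variance $\Var(Y_{l+1}-Y_l\mid\F_l) = \Var(X_{l+1}-X_l\mid\F_l) \ge v$. Since the true drift is at most $\delta$, we have $X_l \le x_0 + \delta l + Y_l$, so it suffices to prove that $\PP(Y_l \le -\tfrac{\sqrt{vl}}{8}) \ge p$ for some absolute $p>0$. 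In other words, the whole statement reduces to a \emph{lower} bound on the lower tail of a bounded-increment martingale with a uniform lower bound on the conditional variance.

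The key step is this anti-concentration statement for $Y_l$. I would prove it by a second-moment / Paley--Zygmund type argument applied to $Y_l$, or more robustly via a CLT-with-error-bound. Concretely: $\E[Y_l^2] = \sum_{j=0}^{l-1}\E\big[\Var(X_{j+1}-X_j\mid\F_j)\big] \ge v l$ by orthogonality of martingale increments, while at the same time $|Y_l|\le 2Rl$ gives crude control and the increments being bounded by $2R$ gives $\E[Y_l^4] \le C R^2 l\, \E[Y_l^2] + C(\E[Y_l^2])^2 \lesssim l^2$ (Burkholder--Davis--Gundy, or a direct expansion using that cross terms vanish). Hence $Y_l/\sqrt{l}$ has variance bounded below by $v$ and bounded fourth moment, so by Paley--Zygmund $\PP(Y_l^2 \ge \tfrac{1}{2} v l) \ge c$ for an absolute $c>0$. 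By symmetry considerations this is not quite enough — $Y_l$ could be concentrated on its positive side — so instead I would invoke a quantitative martingale CLT (e.g. a Berry--Esseen bound for martingales, or for the simplest self-contained route, a Lindeberg-type swapping argument): the bounded increments and the lower variance bound $v$ together with an upper bound on increments $R$ force $\PP(Y_l \le -\tfrac{\sqrt{vl}}{8})$ to be within $o(1)$ of $\Phi(-\tfrac{1}{8}\sqrt{v}/\sigma_{\mathrm{eff}}) > 0$ for $l$ large, and for small $l$ one handles finitely many cases directly; taking $p$ to be the minimum of these gives the absolute constant. The constant $1/8$ is generous, which gives room for the error terms.

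The main obstacle I expect is precisely the asymmetry issue: a one-sided lower bound on the lower tail does not follow from second-moment information alone, since all the mass could sit on the upper tail. Overcoming this is where the increment structure must be used in an essential way — either through a genuine (quantitative) central limit theorem for martingale arrays with the Lindeberg condition supplied by $|Y_{l+1}-Y_l|\le 2R$ and the variance condition supplied by hypothesis (4), or through an explicit truncation-and-conditioning argument showing that at each step there is at least constant probability of a downward move of size $\gtrsim \sqrt{v}$, then bootstrapping via Azuma on the remainder. A secondary, more bookkeeping-level obstacle is making all constants \emph{absolute}, i.e. independent of $l$, $\delta$, $x_0$: this is why the statement is phrased with the $+\delta l$ correction absorbing the drift and with the threshold scaling as $\sqrt{vl}$ rather than $\sqrt{(v-\text{something})l}$, and one should double-check that the small-$l$ regime (where a CLT is not yet accurate) contributes only a finite list of cases each with strictly positive probability.
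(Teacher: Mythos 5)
Your reduction to the centered martingale $Y_l$ is exactly the paper's first step (there $\ol{X}_l-x_0$ plays the role of your $Y_l$), and your diagnosis that two-sided moment information cannot by itself produce a \emph{one-sided} lower-tail bound is the right thing to worry about. Where you diverge is in how that worry is resolved, and the paper's resolution is much more elementary than a CLT: since $Z:=Y_l$ has mean zero, $\E[Z_-]=\E[Z_+]=\tfrac12\E|Z|$, so any lower bound on $\E|Z|$ automatically places a definite amount of mass on the negative side. Writing $\E[Z_-]=\int_0^\infty \PP(Z<-t)\,dt$, using Azuma--Hoeffding to show that the part of this integral with $t>t_c\asymp\sqrt{l}$ contributes at most half of the lower bound, and then using monotonicity of $t\mapsto\PP(Z<-t)$ on $[0,t_c]$ forces $\PP\bigl(Z\le -c\sqrt{vl}\bigr)\gtrsim \sqrt{vl}/t_c$, which is independent of $l$. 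No limit theorem is needed, and the asymmetry issue evaporates.

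The martingale-CLT route you propose in its place has a genuine gap. The hypotheses only give $v\le \Var(Y_{j+1}-Y_j\mid\F_j)\le 4R^2$; a martingale CLT or Berry--Esseen bound requires the normalized sum of conditional variances to converge to an identifiable limit, and here it need not converge at all, so there is no Gaussian to compare against. Worse, the deferred check that ``each small-$l$ case has strictly positive probability'' is not a formality and in fact fails for the stated constant: already for $l=1$ one can take a mean-zero increment supported on $\{-a,b\}$ with $a$ just below $\sqrt{v}/8$ and $b=v/a\approx 8\sqrt{v}\le R$ (possible whenever $v\le R^2/64$); this has variance exactly $v$, bounded steps, and $\PP(Y_1\le-\sqrt{v}/8)=0$, so no case-by-case enumeration can supply a positive $p$ with the constant $1/8$ --- the constant must degrade with $v/R^2$. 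The constructive news is that your moment computations are precisely what the paper's argument is missing: the paper deduces $\E|Z|\ge\sqrt{vl}$ from the variance bound ``by Jensen,'' which goes the wrong way, whereas H\"older in the form $\E[Z^2]\le(\E|Z|)^{2/3}(\E[Z^4])^{1/3}$ combined with your bounds $\E[Z^2]\ge vl$ and $\E[Z^4]\lesssim R^4l^2$ gives $\E|Z|\gtrsim (v/R^2)\sqrt{vl}$, exactly the input the first-moment argument needs (with a threshold constant depending on $v/R^2$ rather than $1/8$). So the workable proof keeps your decomposition and your second- and fourth-moment bounds, drops Paley--Zygmund and the CLT, and finishes with the $\E[Z_-]=\E[Z_+]$ identity plus Azuma.
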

\begin{remark}
This lemma quantifies the fact that if the variance of the jumps is always at least an absolute constant greater than 0, and the drift $\delta$ towards to right is sufficiently small, then after $\OO(r)$ steps, we will move to the left $\OO(\sqrt{r})$ with reasonable probability. 
\end{remark}
\begin{lemma}[Moving in a region of negative drift]\label{lemmamoving}
Assume that $(X_l)_{l\ge 0}$ is a sequence of random variables adapted to some filtration $(\F_l)_{l\ge 0}$. Suppose that 
\begin{itemize}
\item[(1)] $|X_{l+1} - X_{l}|\le R$ almost surely for every $l\in \N$ for some absolute constant $R$,
\item[(2)] $\E(X_{l+1}-X_{l}|\F_l)\le -\delta$ for some $\delta>0$,
\item[(3)] $X_0=x_0$ for some fixed constant $x_0\in \R$.
\end{itemize}
Then the probabilities of moving backward, and forward, respectively, can be bounded as
\begin{align}
\label{eqmovingbackward}&\PP\left(X_{\lceil(1+c)T/\delta\rceil}-x_0\ge -T\right)\le \exp\left(-\frac{c T \delta}{4 R^2}\right)\text{ for any }c\ge 1, T> 0,\\
\label{eqmovingforward}&\PP\left(\max_{0\le k\le l}(X_k-x_0)\ge T\right)\le l\exp\left(-\frac{T\delta}{R^2}\right) \text{ for any }l\in \N, T> 0.
\end{align}
\end{lemma}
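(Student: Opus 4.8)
The plan is to prove both tail bounds by reduction to a supermartingale and Azuma–Hoeffding-type concentration, exploiting only the bounded-increment and negative-drift hypotheses (1)--(3). The natural device is to form the compensated process $Y_l := X_l - X_0 + \delta l$, which by hypothesis (2) satisfies $\E(Y_{l+1} - Y_l \mid \F_l) = \E(X_{l+1} - X_l \mid \F_l) + \delta \le 0$, so $\{Y_l\}$ is a supermartingale with increments bounded by $R + \delta$ in absolute value. Both inequalities will then follow from exponential moment estimates on this supermartingale.

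For \eqref{eqmovingforward}, I would work directly with $Y_l = (X_l - x_0) + \delta l$ and write $\PP\left(\max_{0\le k\le l}(X_k - x_0)\ge T\right) \le \PP\left(\max_{0\le k\le l} Y_k \ge T\right)$, since $Y_k \ge X_k - x_0$. Applying the maximal version of the Azuma inequality for supermartingales (via Doob's maximal inequality applied to the nonnegative submartingale $e^{\lambda Y_k}$ after controlling $\E[e^{\lambda(Y_{k+1}-Y_k)}\mid \F_k] \le e^{C\lambda^2}$ for a suitable constant) and a union bound over the $l$ time steps, together with optimising $\lambda$, yields a bound of the stated form $l \exp(-T\delta/R^2)$. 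One has to be slightly careful that the per-step MGF bound should use increments of size $O(R)$ (here I would assume $\delta \le R$, or absorb the $\delta$ into the constant, which is harmless since otherwise the drift hypothesis forces trivial behaviour); the exact constant in the exponent is a matter of bookkeeping in the Hoeffding lemma.

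For \eqref{eqmovingbackward}, set $l = \lceil (1+c)T/\delta\rceil$ and observe that $X_l - x_0 \ge -T$ is equivalent to $Y_l = (X_l - x_0) + \delta l \ge \delta l - T \ge (1+c)T - T = cT$. Thus $\PP(X_l - x_0 \ge -T) \le \PP(Y_l \ge cT)$, and since $Y_l$ is a mean-$\le 0$ supermartingale with increments bounded by roughly $2R$ (again using $\delta \le R$), the one-sided Azuma/Hoeffding bound gives $\PP(Y_l \ge cT) \le \exp\left(-\frac{(cT)^2}{2 l R'^2}\right)$ for an appropriate $R'$; substituting $l \le (1+c)T/\delta + 1 \le 2(1+c)T/\delta$ (for $c\ge 1$, $T$ not too small — or handling the ceiling with an extra additive term absorbed into constants) converts this into $\exp\left(-\const \cdot \frac{c T \delta}{R^2}\right)$, matching the claimed form after checking that $c^2/(1+c)\ge c/2$ for $c\ge 1$.

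The main obstacle is bookkeeping rather than conceptual: one must pin down the exponential-moment inequality for the increments so that the constants come out as stated ($\frac{cT\delta}{4R^2}$ and $\frac{T\delta}{R^2}$), which requires being careful about whether increments are bounded by $R$, $R+\delta$, or $2R$, and whether to invoke the symmetric Hoeffding lemma (increments in $[-R,R]$) or the one-sided version. I would handle this by first noting that without loss of generality $\delta \le R$ (since the claim is about an absolute constant $R$ bounding the increments, and if $\delta > R$ the drift alone controls everything and the bounds hold trivially for suitable constants), which makes all increments of $Y_l$ bounded by $2R$, and then applying the standard estimate $\E[e^{\lambda Z}] \le e^{\lambda^2 (2R)^2/2}$ for a mean-$\le 0$ random variable $Z \in [-2R, 2R]$ conditionally, followed by optimisation in $\lambda$. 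The ceiling in $l = \lceil(1+c)T/\delta\rceil$ and the union-bound factor $l$ in the forward bound are the only other mild annoyances, both absorbed into the constants.
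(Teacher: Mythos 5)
Your treatment of the backward bound \eqref{eqmovingbackward} is essentially the paper's argument: the paper centres $X$ by the exact conditional means to form the martingale $\ol{X}_l = x_0+\sum_{k\le l}\bigl(D_k-\E(D_k\mid\F_{k-1})\bigr)$, while you centre by $\delta$ to get a supermartingale; either way the increments lie in a conditionally deterministic interval of length $2R$, so the interval form of Hoeffding's lemma gives $\E\bigl[e^{\lambda(\ol{X}_{k+1}-\ol{X}_k)}\mid\F_k\bigr]\le e^{\lambda^2R^2/2}$ and hence $\PP(\ol{X}_l-x_0\ge t)\le\exp\bigl(-t^2/(2lR^2)\bigr)$, with no $(2R)^2$ loss. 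With $l\le (1+c)T/\delta+1\approx 2cT/\delta$ for $c\ge1$, the exponent $c^2T^2/(2lR^2)\ge cT\delta/(4R^2)$ then comes out exactly as stated, so the bookkeeping you defer does close. (Your ``WLOG $\delta\le R$'' is automatic: hypotheses (1) and (2) force $-R\le\E(X_{l+1}-X_l\mid\F_l)\le-\delta$.)

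The forward bound \eqref{eqmovingforward}, however, has a genuine gap. By passing from $\{\max_k(X_k-x_0)\ge T\}$ to $\{\max_k Y_k\ge T\}$ with $Y_k=X_k-x_0+\delta k$, you lower the threshold from $T+\delta k$ to $T$ and thereby discard the compensator exactly where it is needed. After that step no Azuma-type estimate --- Doob's maximal inequality on $e^{\lambda Y_k}$, or a union bound over $\PP(Y_k\ge T)$ --- can produce the factor $e^{-T\delta/R^2}$: the drift no longer enters the event, and the best exponent available is of order $T^2/(lR^2)$, which tends to $0$ as $l\to\infty$. Concretely, if $X$ is a $\pm R$ walk with drift $-\delta$, then $Y$ is a mean-zero walk and $\PP(\max_{k\le l}Y_k\ge T)\to1$ as $l\to\infty$, whereas $l\,e^{-T\delta/R^2}$ can be astronomically small for moderate $l$ when $T\delta/R^2$ is large; the intermediate inequality you would need is simply false. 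The repair is the paper's argument: keep the shifted threshold, $\{X_k-x_0\ge T\}=\{\ol{X}_k-x_0\ge T+\delta k\}$ (up to replacing $\delta k$ by the at least as large sum of conditional drifts), so that Azuma gives $\PP(X_k-x_0\ge T)\le\exp\bigl(-(T+\delta k)^2/(2kR^2)\bigr)$; since $(T+\delta k)^2\ge 4T\delta k$ this is at most $\exp(-2T\delta/R^2)\le\exp(-T\delta/R^2)$ \emph{uniformly in $k$}, and a union bound over $1\le k\le l$ yields the stated factor $l$.
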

 
The following lemma shows that once two chains have met in magnetisation, they can be coupled together in $\OO(M\log(M))$ time with high probability.
\begin{lemma}[From coupling in magnetisation to coupling in spins]\label{lemmacinmagtocinspinsPotts}
Let $\sigma(0), \tsigma(0)\in \Omega$ such that $s(\sigma(0))=s(\tsigma(0))$. Let $(\sigma(t))_{t\ge 0}$ and $(\tsigma(t))_{t\ge 0}$ be two Glauber dynamics chains with temperature parameter $\tbeta_c$ started at $\sigma(0)$ and $\tsigma(0)$, respectively. 
Let $\tau:=\inf\{t\ge 0: \sigma(t)=\tsigma(t)\}$. Then there is a coupling $((\sigma(t))_{t\ge 0},(\tsigma(t))_{t\ge 0})$ such that for this coupling,
\[\PP(\tau>t)\le \frac{M}{2} \exp\left(-\frac{t}{9M}\right).\]
\end{lemma}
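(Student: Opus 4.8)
The plan is to construct the coupling in two phases. First, couple the two magnetisation chains $s(\sigma(t))$ and $s(\tsigma(t))$: since $s(\sigma(0)) = s(\tsigma(0))$ by assumption, and since (as recalled in the text, following \cite{PeresPotts}) the magnetisation is itself a Markov chain whose transition depends only on the current magnetisation, we can run the two chains with a common source of randomness for the \emph{magnetisation update} so that their magnetisation vectors remain \emph{identical for all time}. Concretely, at each Glauber step one picks a uniformly random coordinate and re-samples its colour from the conditional distribution; the change in magnetisation depends only on the old colour of the picked site and the new colour drawn. Using a maximal coupling of the ``(old colour, new colour)'' pair across the two chains — which is possible because the counts of each colour agree — keeps the magnetisations synchronised. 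What is \emph{not} yet synchronised is the actual spin configuration: even with equal magnetisation the two chains may disagree on which sites carry which colour.

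Second, once the magnetisations are locked together, I would run a coordinate-wise coupling to make the configurations themselves agree. At each step, the two chains pick the \emph{same} site $i$ (common randomness for the site choice) and, because the magnetisations are equal, the conditional colour distributions at site $i$ are identical in the two chains; so we can set $\sigma_i(t{+}1) = \tsigma_i(t{+}1)$ at that step. This is exactly a coupon-collector / coupling-time argument: a site, once updated, agrees forever after (its colour is drawn jointly and the magnetisation stays equal), so the configurations coincide as soon as every site has been selected at least once. The number of steps until all $M$ sites have been picked is stochastically dominated by the coupon collector time, and a standard union bound gives $\PP(\text{some site not yet picked after } t \text{ steps}) \le M(1 - 1/M)^t \le M e^{-t/M}$. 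Combining, $\PP(\tau > t) \le M e^{-t/M}$, which is already stronger than the claimed bound; the factor $\tfrac12$ and the constant $9$ in the exponent provide ample slack, so one can afford to be wasteful (e.g. if a few steps are ``spent'' on the magnetisation-coupling phase, or if one wants each individual site to agree with high probability before declaring success).

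The main subtlety — and the step I would be most careful about — is the first phase: verifying that a single randomness stream can \emph{simultaneously} keep the magnetisations equal and, site by site, allow coalescence of the spins. The risk is that the maximal coupling used to preserve magnetisation forces correlated, ``wrong'' colour choices that prevent per-site agreement. The clean way around this is to observe that once magnetisations are equal we no longer need any special magnetisation coupling at all: picking the same site and drawing its new colour from the \emph{common} conditional law both preserves magnetisation equality (the increment to $s$ is a deterministic function of the jointly-chosen old and new colours, and the old colours agree on any already-coalesced site — and for sites not yet coalesced one simply uses a maximal coupling of the colour pair, which still leaves the marginal magnetisation laws, hence the equality, intact) and yields coalescence at that site. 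So effectively phase one is trivial (it is the hypothesis $s(\sigma(0))=s(\tsigma(0))$) and the whole argument reduces to the coupon-collector bound of phase two; I would just need to write the per-step colour coupling carefully enough to confirm these two properties hold jointly, then collect the constants.
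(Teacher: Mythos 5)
There is a genuine gap in your phase two, and it is precisely the subtlety you flagged but then dismissed. When the two chains pick the \emph{same} site $I_t$ and currently \emph{disagree} there, two things go wrong. First, the conditional colour distributions at $I_t$ are \emph{not} identical: the Glauber conditional at a site is proportional to $\exp\big(2\tbeta_c\, s_c(\sigma(t))\big)$ for colours $c$ different from the site's current colour but carries an extra factor $\exp(-2/M)$ on the site's \emph{current} colour (it is computed given the other $M-1$ spins, so the site's own old colour is subtracted from the count). Since the old colours at $I_t$ differ between the chains, so do the conditional laws, and a maximal coupling of them fails with positive probability. Second, and fatally, even on the event that the maximal coupling produces the \emph{same} new colour in both chains, the magnetisation increments are $(e_{\text{new}}-e_{\text{old}})/M$ with \emph{different} old colours, so the two magnetisation vectors immediately cease to be equal. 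Once pathwise equality of the magnetisations is lost, your claim that ``a site, once updated, agrees forever'' collapses (a later update of an agreed site can now disagree), and the coupon-collector bound $M e^{-t/M}$ — which is in fact \emph{stronger} than the stated bound, a warning sign — no longer follows. Preserving the marginal law of each magnetisation chain is not enough; the coupling needs pathwise equality.

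The paper's proof repairs exactly this point with a matched-site (permutation-style) coupling: when the chains disagree at the chosen site $I_t$, the second chain does not update $I_t$ but instead a uniformly chosen site $\tilde I_t$ among those where it currently carries the colour $\sigma_{I_t}(t)$ and where the two configurations disagree. Then the old colours at the two updated sites coincide, the conditional laws coincide, the same new colour $Z_{t+1}$ can be used in both chains, the magnetisations stay equal for all time, and the Hamming distance $D(t)$ is non-increasing. Coalescence is then driven not by coupon collecting but by a drift estimate: $D(t)$ drops by one with probability at least $D(t)/(9M)$ per step (two favourable colour choices, each of probability at least $\tfrac{D(t)}{M}\cdot\tfrac{1}{2+e^{2\tbeta_c}}$), so $D(t)\,(1-\tfrac{1}{9M})^{-t}$ is a supermartingale and $\PP(\tau>t)\le \E(D(t))/2 \le \tfrac{M}{2}(1-\tfrac{1}{9M})^{t}$, which is where the constants $\tfrac{1}{2}$ and $9$ actually come from. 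To fix your argument you would need to replace the same-site step by such a colour-matched site pairing and rerun the estimate on the Hamming distance.
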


\subsection{Bounding the drift towards the centers}\label{secdrift}
The following lemma shows bounds the drift towards the centers at a given distance from the centers.
\begin{lemma}[Drift bound]\label{worstdriftlemma}
Let $\{S(k)\}_{k\ge 0}$ be the magnetisation chain on $\Omega^S$.
Then for any $s\in \Omega^{S}$ with $d_C(s)>1/M$, we have
\[\E( d_C(S(1))-d_C(S(0))|S(0)=s)\le -\frac{\varphi(d_C(s))}{M}+\frac{1}{M^2}\cdot \left(8+\frac{1}{2(d_C(s)-1/M)}\right),\]
with $\phi:\left[0,\sqrt{3}/6\right]\to \R$ defined as
\[\varphi(t):=
\left\{\begin{array}{l}
0.002\cdot \frac{t}{\sqrt{3}/24} \text{ for } 0\le t\le \sqrt{3}/24,\\
0.002\cdot \frac{\sqrt{3}/12-t}{\sqrt{3}/24} \text{ for } \sqrt{3}/24\le t\le \sqrt{3}/12\text{ and }\\
0.002\cdot \frac{t-\sqrt{3}/12}{\sqrt{3}/24} \text{ for } \sqrt{3}/12\le t \le \sqrt{3}/6.\\
\end{array}
\right.
\]
\end{lemma}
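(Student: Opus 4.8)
The plan is to compute the one-step drift of $d_C(S(1))$ directly from the transition rule of the magnetisation chain, by conditioning on the identity of the subtriangle $T_i$ containing $s$ (so that $d_C(s)=d(s,C_i)$) and then decomposing the displacement of $S(1)$ into a radial part relative to $C_i$ and a remainder that must be controlled by a Taylor expansion. First I would recall that a single Glauber step changes exactly one coordinate of $\sigma$, so $S(1)-S(0)$ is one of finitely many vectors of the form $\frac{1}{M}(e_k-e_\ell)$-type updates (in the barycentric picture, a vector of length $O(1/M)$), with explicit probabilities depending on the current colour counts and on $\tbeta_c$. Writing $u:=C(s)-C(C_i)\in\R^2$ (so $\|u\|=d_C(s)$), I would expand
\[
d_C(S(1)) - d_C(S(0)) = \|u+\Delta\| - \|u\|
= \frac{\langle u,\Delta\rangle}{\|u\|} + \frac{\|\Delta\|^2\|u\|^2-\langle u,\Delta\rangle^2}{2\|u\|^3} + O\!\Big(\frac{\|\Delta\|^3}{\|u\|^2}\Big),
\]
where $\Delta:=C(S(1))-C(S(0))$ has $\|\Delta\|\le R/M$ for an absolute constant $R$. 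Taking conditional expectations: the first-order term contributes $\langle u,\E[\Delta\mid s]\rangle/\|u\|$, which is the genuine drift of the magnetisation toward/away from the centres; the second-order term is nonnegative and bounded by $\frac{\|\Delta\|^2}{2\|u\|}\le \frac{R^2}{2M^2\,d_C(s)}$, and since $d_C(s)>1/M$ is assumed, this is at most the $\frac{1}{2M^2(d_C(s)-1/M)}$ piece in the stated bound (after absorbing $R^2$); the cubic remainder and the discrepancy between $d_C(s)$ and $d_C(s)-1/M$ in the denominators are swept into the $8/M^2$ constant, together with the error from replacing exact counts by their ratios.

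The substantive step is to show that the first-order (drift) term $\langle u,\E[\Delta\mid s]\rangle/\|u\|$ is bounded above by $-\varphi(d_C(s))/M$ for the explicit piecewise-linear $\varphi$. For this I would compute $\E[\Delta\mid s]$ explicitly: it is $\frac{1}{M}$ times a vector whose components are differences of the probabilities that the resampled spin takes colour $k$ versus colour $\ell$, and those probabilities are, up to $O(1/M)$, the Gibbs conditionals $\propto \exp(2\tbeta_c s_k)$ at the mean-field point $s$ (using $\tbeta_c=2\log 2$). Thus $M\cdot\E[\Delta\mid s]$ is, to leading order, the gradient-flow field $-\nabla \Psi(s)$ of the mean-field free energy $\Psi(s)=\sum_k s_k\log s_k - \tbeta_c\sum_k s_k^2$ restricted to $T$ (projected onto the simplex). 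The point $C_4$ is a local max of $\mmu^{\mathrm{mag}}$ (a saddle/min of $\Psi$) and $C_1,C_2,C_3$ are the other critical points, with the separating manifolds passing through the barycentres of the edges of $T$ — distances $\sqrt3/12$ from a centre, $\sqrt3/6$ across, which is exactly where $\varphi$ vanishes. So I would (i) verify $\nabla\Psi(C_i)=0$ for $i=1,\dots,4$ and that the radial component of $-\nabla\Psi$ along the segment from a centre is strictly positive in the interior of each basin and changes sign at the separators, (ii) obtain a quantitative lower bound on $|{-}\nabla\Psi|\cdot\cos(\text{angle to radial direction})$ as a function of $d_C(s)$ alone — this is where the piecewise-linear lower envelope $\varphi$ with slope $0.002/(\sqrt3/24)$ comes from, chosen to lie below the true (smooth, concave-then-convex) profile on each of the three intervals — and (iii) control the $O(1/M)$ error between the true conditional probabilities and the idealised Gibbs conditionals, which is again absorbed into $8/M^2$.

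The main obstacle is step (ii): producing a clean, fully explicit lower bound on the radial drift that is valid uniformly over the whole triangle $T$, including near the centres (where the drift vanishes linearly, hence the linear piece of $\varphi$), near the separating edges (where it vanishes and changes sign), and near the boundary $\partial T$ (where one of the $s_k\to 0$ and $\log s_k$ blows up, so one must check the radial component stays controlled and non-negative — intuitively it points inward there, reinforcing the bound). The symmetry of the model under permuting the three colours reduces the work to a single fundamental domain, but one still has to handle the non-radial component of the drift, which near the separators is not small relative to the radial component; the factor ``angle to radial'' must be bounded away from being adversarial, and I expect this forces the rather small numerical constant $0.002$. Everything else — the Taylor expansion, the second-order term bound, the $1/M$ count-vs-ratio corrections — is routine bookkeeping that I would relegate to the appendix.
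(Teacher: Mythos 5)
Your structural reduction matches the paper's: you expand $d_C(S(1))-d_C(S(0))=\|u+\Delta\|-\|u\|$ to first order (the paper does this via Pythagoras, getting exactly the error term $\tfrac{1}{2M^2(d_C(s)-1/M)}$ since the step length in the $d$-metric is exactly $1/M$), you identify the first-order drift with an explicit mean-field vector field built from the Gibbs conditionals $\propto\exp(2\tbeta_c s_j)$, and you absorb the $O(1/M)$ discrepancy between the true transition probabilities and their $M$-free idealisations into the $8/M^2$ term. All of that is correct and is essentially the paper's computation of $L(s)$.

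The genuine gap is the step you yourself flag as ``the main obstacle'': the uniform pointwise inequality that the radial component of the leading drift is at most $-\varphi(d_C(s))$ over the whole triangle. This is the entire analytic content of the lemma, and your proposal offers only a qualitative description of why it should hold (critical points of the free energy at $C_1,\dots,C_4$, sign change at the separators, inward push near $\partial T$), not an argument that produces the constant $0.002$ or that controls the region near the separating edges where the radial drift vanishes while the tangential component does not. Note also that the left-hand side $L(s)$ vanishes not only at the four centres but also at the three saddle points $(C_i+C_4)/2$, $i=1,2,3$ (at distance $\sqrt3/12$ from the relevant centres --- these are the midpoints of the segments $C_iC_4$, not barycentres of edges of $T$), so the inequality is degenerate at seven points and cannot follow from a naive compactness argument. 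The paper closes this step by a rigorous computer-assisted verification: it shows $L$ and $\varphi\circ d_C$ are Lipschitz, checks $L(s)\le-\varphi(d_C(s))$ on a fine triangular grid with interval arithmetic, and extends to neighbourhoods of the seven degenerate points by Taylor expansion with remainder. Without supplying either that verification or a genuinely analytic substitute, your proposal does not establish the lemma; everything else in it is sound bookkeeping.
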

The proof of Lemma \ref{worstdriftlemma} is included in the Appendix.
Figure \ref{varphiplotfig} plots $\varphi(t)$ for $0\le t\le \sqrt{3}/6$. Note that $\varphi(t)=0$ for $t=0$ and $t=\sqrt{3}/12$.

\begin{figure}[h!]
  \centering
  \includegraphics[height=4cm]{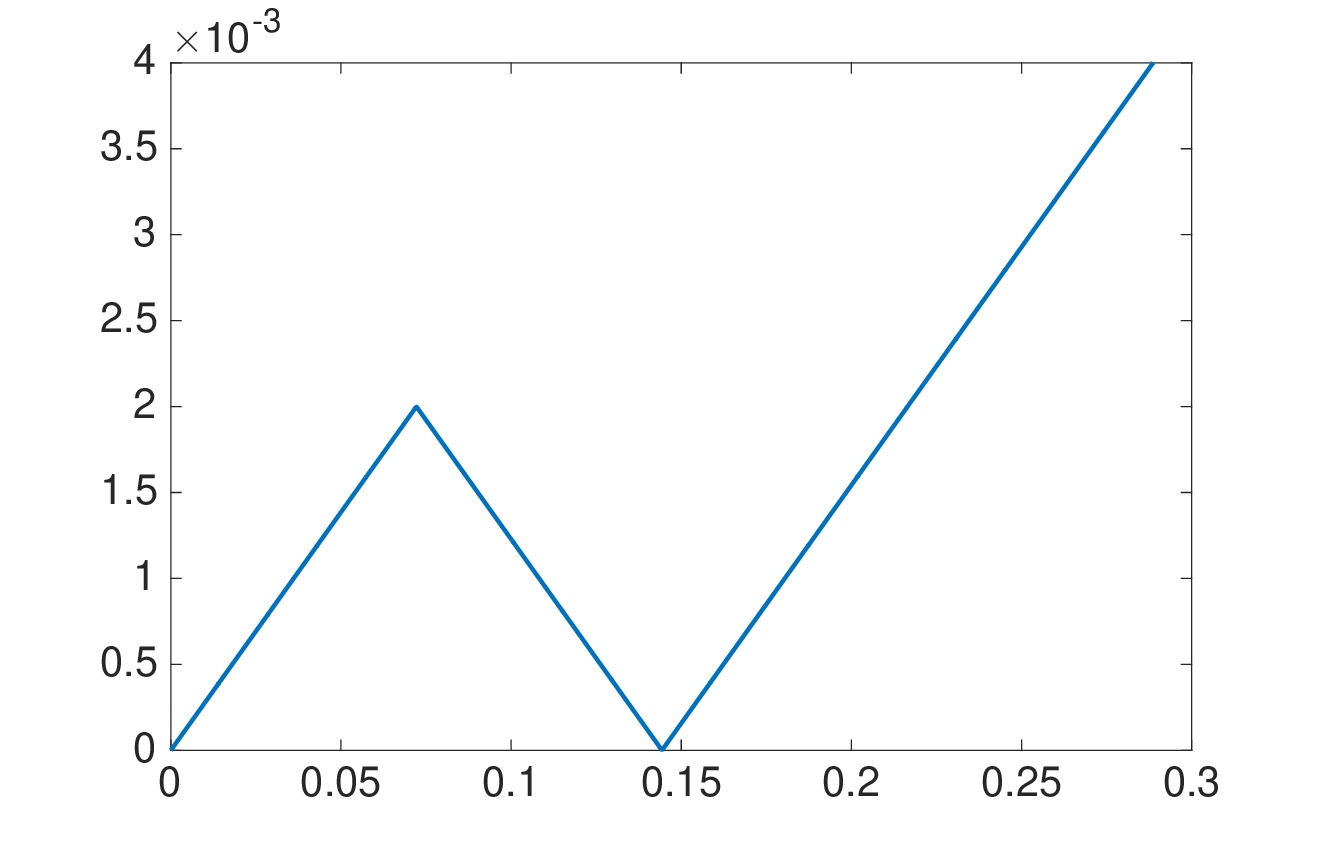}
    \caption{$\varphi(t)$ for $0\le t\le \sqrt{3}/6$}
    \label{varphiplotfig}
\end{figure}

\subsection{Bounds on escaping from the center}\label{secproofprop1}
In this section, we prove the following proposition bounding the probability of escaping from the region near one of the centers.
\begin{proposition}[Escaping from a central region]\label{keyprop3}
Let $\{S(t)\}_{t\ge 0}$ be the magnetisation chain on $\Omega^S$. Assuming that $S(0)=s$ with $d_C(s)\le \rho\cdot\frac{\sqrt{3}}{2}$, the probability of getting further away than $\rho\sqrt{3}$ in $l$ steps is bounded as
\begin{equation}\PP\left(\max_{0\le k\le l}d_C(S(k))>\rho\sqrt{3}|S(0)=s\right)\le l^2\exp(-\Cesc M),
\end{equation}
for $M$ larger than some absolute constant, where $\Cesc$ is an absolute constant that can be chosen as $\Cesc=0.005\rho^2$.
\end{proposition}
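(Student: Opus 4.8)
The plan is to show that the magnetisation chain $\{S(k)\}$, started near a centre $C_i$, has negative drift towards that centre at every distance in the relevant range, and then to combine a backward-drift estimate (Lemma \ref{lemmamoving}, Equation \eqref{eqmovingforward}) with a union bound over time to control the maximal excursion. First I would note that by Proposition \ref{keyprop4}-type reasoning, once $d_C(s)\le\rho\sqrt3/2$ we remain inside the subtriangle $T_i$ containing $C_i$, so that $d_C(S(k))=d(S(k),C_i)$ as long as the chain has not escaped past $\rho\sqrt3$; hence I may work with the single scalar process $X_k\defby d(S(k),C_i)$ and need only bound $\PP(\max_{0\le k\le l}X_k>\rho\sqrt3)$. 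On the region $1/M < X_k \le \rho\sqrt3$, Lemma \ref{worstdriftlemma} gives
\[
\E(X_{k+1}-X_k \mid \F_k)\le -\frac{\varphi(X_k)}{M}+\frac{1}{M^2}\Bigl(8+\frac{1}{2(X_k-1/M)}\Bigr),
\]
and on the interval $t\in(0,\sqrt3/24]$ the function $\varphi$ is linear and strictly increasing away from $0$; since $X_k\le\rho\sqrt3 = 10^{-6}\sqrt3 \ll \sqrt3/24$, we have $\varphi(X_k)=0.002\,X_k/(\sqrt3/24)$, which is a constant multiple of $X_k$. I would then observe that, restricted to the annulus $\{\rho\sqrt3/2 \le X_k \le \rho\sqrt3\}$ — the range the chain must traverse to escape — we get a uniform negative drift $\E(X_{k+1}-X_k\mid\F_k)\le -\delta$ with $\delta$ of order $\rho/M$ (the $1/M^2$ error term is negligible compared with $\varphi(\rho\sqrt3/2)/M \sim \const\cdot\rho/M$ once $M$ is larger than an absolute constant, because $\rho\sqrt3/2 - 1/M \ge \rho\sqrt3/4$ for large $M$).

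Next I would turn the drift bound into the escape probability. The jumps of $X_k$ are bounded: flipping one spin among $M$ changes each $s_\ell$ by at most $1/M$, hence $|X_{k+1}-X_k|\le d$-distance corresponding to that, which is $O(1/M)$; call this bound $R = O(1/M)$. Now I apply the forward-excursion estimate \eqref{eqmovingforward} of Lemma \ref{lemmamoving} with the shifted process $X_k - \rho\sqrt3/2$ started at (or below) $0$, threshold $T = \rho\sqrt3/2$, drift $-\delta$ with $\delta = \const\cdot\rho/M$, and step bound $R = \const/M$:
\[
\PP\Bigl(\max_{0\le k\le l}(X_k-\rho\sqrt3/2)\ge \rho\sqrt3/2\Bigr)\le l\exp\Bigl(-\frac{T\delta}{R^2}\Bigr),
\]
and $T\delta/R^2 = (\rho\sqrt3/2)(\const\,\rho/M)/(\const/M^2) = \const\cdot\rho^2 M$. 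This gives a bound of the form $l\exp(-c\rho^2 M)$ for the probability of reaching distance $\rho\sqrt3$ starting from distance $\le\rho\sqrt3/2$. To get from "started at $d_C(s)\le\rho\sqrt3/2$" as in the proposition statement, one must be slightly careful because the drift estimate of Lemma \ref{worstdriftlemma} only holds when $d_C > 1/M$; in the tiny core ball $\{d_C\le 1/M\}$ I would simply bound the probability of leaving it (again jumps are $O(1/M)$, so in $l$ steps the chain cannot travel more than $O(l/M)$, but more robustly one absorbs this into the same union bound). The factor $l^2$ in the statement, rather than $l$, comes from either applying \eqref{eqmovingforward} on $\lceil t/2\rceil$-length blocks and summing, or — more likely in the authors' bookkeeping — from the fact that in Proposition \ref{keyprop5} this escape bound gets composed with a mixing estimate over a window of length $\le t$, so a crude $l^2\exp(-\Cesc M)$ with $\Cesc = 0.005\rho^2$ is stated for convenience; I would just verify the exponent constant $0.005\rho^2$ by tracking $T\delta/R^2$ with the explicit constants from $\varphi$ (namely $\varphi(t)\ge 0.002\cdot t/(\sqrt3/24)$ near $0$, which at $t=\rho\sqrt3/2$ gives $\varphi \ge 0.002\cdot 12\rho = 0.024\rho$, so $\delta\ge 0.024\rho/M - O(1/M^2)$) and confirming the product lands at or above $0.005\rho^2 M$ after accounting for the constant in $R$.

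\textbf{Main obstacle.} The genuinely delicate point is the interplay between the two length scales $1/M$ (step size, and the radius inside which the drift lemma fails) and $\rho$ (the fixed, $M$-independent escape radius): one must confirm that the $1/M^2$ error term in Lemma \ref{worstdriftlemma} is dominated by the drift $\varphi(d_C(s))/M$ uniformly on the escape annulus, which requires $d_C(s)-1/M$ bounded below by a constant multiple of $\rho$ — true for $M$ larger than an absolute constant since $\rho$ is fixed — and then tracking all the absolute constants carefully enough to land on the claimed $\Cesc = 0.005\rho^2$. Everything else (the reduction to a one-dimensional process via confinement in $T_i$, the bounded-difference property, and the application of \eqref{eqmovingforward}) is routine given the lemmas already stated.
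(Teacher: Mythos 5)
Your drift computation on the annulus, the step bound $R=1/M$, and the order-of-magnitude check $T\delta/R^2\sim \rho^2 M$ all match the paper. But there is a genuine gap at the central step: you apply \eqref{eqmovingforward} of Lemma \ref{lemmamoving} directly to the shifted process $X_k-\rho\sqrt{3}/2$, and that lemma requires the drift condition $\E(X_{l+1}-X_l\mid\F_l)\le-\delta$ to hold at \emph{every} step $l$, not merely when $X_k$ happens to lie in the annulus $[\rho\sqrt{3}/2,\rho\sqrt{3}]$. Below the annulus the drift toward the centre degrades linearly (since $\varphi(t)\propto t$ near $0$) and can be swamped by the $1/M^2$ error term of Lemma \ref{worstdriftlemma} — indeed near $d_C\approx 1/M$ that error term blows up — so the hypothesis of Lemma \ref{lemmamoving} simply fails for the raw distance process over the whole trajectory. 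This is exactly the obstruction the paper flags ("we cannot apply the martingale-type inequalities of Lemma \ref{lemmamoving} directly to $\{d_C(S(k))\}_{k\ge 0}$ because the drift does not hold uniformly in every $k$"), and your proposed patch (controlling only the core ball $\{d_C\le 1/M\}$) does not address it, since the problem is the entire region below the annulus, not just the ball where the drift lemma is inapplicable.

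The paper's resolution is to introduce, for each possible entry time $k\in\{0,\dots,l-1\}$ into the annulus, an auxiliary process $D^{(k)}_0,D^{(k)}_1,\dots$ whose increments coincide with those of $d_C(S(\cdot))$ while the chain sits in the annulus and are replaced by a deterministic decrement $-0.01\rho/M$ otherwise. Each $D^{(k)}$ does satisfy the uniform drift hypothesis of Lemma \ref{lemmamoving} with $R=1/M$ and $\delta$ of order $\rho/M$, and the escape event is contained in the union over $k$ of the events $\{\max_i D^{(k)}_i\ge\sqrt{3}\rho\}$. Applying \eqref{eqmovingforward} to each $D^{(k)}$ gives a factor $l\exp(-cM)$, and the union bound over the $l$ entry times gives the second factor of $l$ — so the $l^2$ is intrinsic to the escape argument itself, not (as you conjecture) an artefact of how the bound is later composed in Proposition \ref{keyprop5}. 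Your first guess for the source of the $l^2$ ("applying \eqref{eqmovingforward} on blocks and summing") is in the right spirit, but without the auxiliary-process construction the argument as written does not go through.
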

\begin{remark}
By the inner mode sets $\I^{(i)}_M$ and the sets $\tilde{\Lambda}^{(i)}:=\{\sigma\in \Omega: -\rho\le s_l(\sigma)-C_{i,l}\le 2\rho \text{ for }l=1,2,3\}$ satisfy that in the magnetisation space, their defining equations restrict them into equilateral triangles, with edge lengths $\frac{\sqrt{3}}{2}\cdot \rho$ for $\I^{(i)}_M$, and $2\sqrt{3}\cdot \rho$ for $\tilde{\Lambda}^{(i)}$. Therefore, in particular, this proposition implies that if we start from $\sigma(0)=\sigma\in \I^{(i)}_M$, then the probability of the Glauber dynamics chain $\{\sigma(k)\}_{k\ge 0}$ exiting from $\tilde{\Lambda}^{(i)}$ in the first $l$ steps can be bounded as
\begin{equation}\label{exitfromLambdaieq}\PP\left(\sigma(k)\notin  \tilde{\Lambda}^{(i)} \text{ for some }1\le k\le l |\sigma(0)=\sigma\right)\le l^2\exp(-\Cesc M),
\end{equation}
for $M$ larger than some absolute constant.
\end{remark}
\begin{proof}[Proof of Proposition \ref{keyprop3}]
Using Lemma \ref{worstdriftlemma}, we can see that for $s\in \Omega^S$ satisfying that $d_C(s)\in \left[\rho\frac{\sqrt{3}}{2},\rho\sqrt{3}\right]$, for $M$ larger than some absolute constant,
\[\E( d_C(S(1))-d_C(S(0))|S(0)=s)\le-\frac{\phi\left(\frac{\rho \sqrt{3}}{2}\right)}{2M}\le \frac{-0.012\rho}{M}.\]
In order to get further away than $\rho$ from one of the centers, we need to spend a period of time when the distance is between $\rho\frac{\sqrt{3}}{2}$ and $\rho\sqrt{3}$, and then exceed $\rho\sqrt{3}$. 

Notice that we cannot apply the martingale-type inequalities of Lemma \ref{lemmamoving} directly to $\{d_C(S(k))\}_{k\ge 0}$ because the drift does not hold uniformly in every $k$. Instead of  direct application, we use a coupling argument. For every $0\le k\le l-1$, we define a sequence of random variables $D^{(k)}_0, D^{(k)}_{1}, \ldots, $ as follows. First, we set $D^{(k)}_0:=d_C(S(k))$. After this, we define the rest of the sequence such that it satisfies that for every $j\in \N$,
\begin{align*}D^{(k)}_{j+1}-D^{(k)}_{j}&:=1_{\left[d_C(S(k+j))\in \left[\rho\frac{\sqrt{3}}{2},\rho\sqrt{3}\right] \left[\rho\frac{\sqrt{3}}{2},\rho\sqrt{3}\right] \right]}\cdot [d_C(S(k+j+1))-d_C(S(k+j))]\\
&\quad - 1_{\left[d_C(S(k+j))\notin \left[\rho\frac{\sqrt{3}}{2},\rho\sqrt{3}\right]\right]}\cdot \frac{0.012\rho}{M}.
\end{align*}
From the above definitions it follows that for $s\in \Omega^S$ satisfying that $d_C(s)\in \left[\rho\frac{\sqrt{3}}{2},\rho\sqrt{3}\right]$,
\[\PP\left(\left.\max_{0\le k\le l}d_C(S(k))>\sqrt{3}\rho\right|S(0)=s\right) \le \sum_{k=0}^{l-1}\PP\left(\left. \max_{0\le i\le l-k}(D^{(k)}_{i})\ge \sqrt{3}\rho \right| S(0)=s\right).\]
Then from their definition, we can see that the random variables $(D^{(k)}_{i})_{0\le i\le {l-k}}$ satisfy the conditions of Lemma \ref{lemmamoving} with $R=\frac{1}{M}$ and $\delta:=\frac{0.012\rho}{M}$, and the claim of the proposition follows by applying \eqref{eqmovingforward} with $T=\frac{\sqrt{3}}{2}\rho-\frac{1}{M}$ and $\delta=\frac{0.012\rho}{M}$ on $\max_{0\le i\le l-k}(D^{(k)}_{i})$, and then summing up.
\end{proof}

\subsection{Getting to one of the centers}\label{secproofprop2}
In this section, we prove Proposition \ref{keyprop4} based on drift arguments and concentration inequalities. The following lemma will be used for the proof.
\begin{lemma}[Minimum variance of jumps]\label{lemmaminvar}
Let $\{S(t)\}_{t\ge 0}$ be the magnetisation chain on $\Omega^S$. Then for $M$ larger than some absolute constant, for any starting point $s\in \Omega^S$, 
\[\Var(d_C(S(1))|S(0)=s)\ge \frac{v_{\min}}{M^2}, \quad \text{ with }\quad v_{\min}:=0.001.\]
\end{lemma}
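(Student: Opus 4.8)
The plan is to bound the one-step variance of the magnetisation chain's distance-to-nearest-center from below by examining the contribution of a single, well-chosen spin flip. Since the Glauber dynamics picks a uniformly random coordinate $l\in\{1,\dots,M\}$ and then resamples $\sigma_l$ according to the conditional distribution given the other spins, the increment $d_C(S(1))-d_C(S(0))$ is a random variable that is $0$ unless the chosen spin actually changes colour. The key observation is that, regardless of the current configuration, there is at least one colour $c$ and at least $\Theta(M)$ coordinates currently holding colour $c$ such that resampling any one of them has a probability bounded below by an absolute constant of landing on some \emph{other} fixed colour $c'$ — this is because the conditional probabilities in Glauber dynamics for the mean-field Potts model at $\tbeta_c$ are all bounded away from $0$ and $1$ (the energy differences from a single flip are $O(1/M)$, so each of the three colours has conditional probability within a constant factor of $1/3$).

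Concretely, first I would write $\Var(d_C(S(1))\mid S(0)=s)\ge \frac14\,\E\big[(d_C(S(1))-d_C(S(0)))^2\big]$ minus a correction, or more cleanly use $\Var(Y)\ge \frac12\E[(Y-Y')^2]$ for an independent copy, or simplest: exhibit two spin-change events of comparable probability that move $d_C$ in measurably different ways, so that conditionally on "a change happens among these" the variance is bounded below. The change of a single spin from colour $c$ to colour $c'$ shifts the magnetisation vector $s$ by $\pm\frac1M$ in two coordinates, hence by \eqref{dsspdefeq} changes $C(s)$ by a vector of length $\frac1M\cdot\frac{1}{\sqrt2}\sqrt2=\frac1M$ (up to the exact constant from the $e_i$ normalisation, which is $\tfrac{\sqrt3}{3}$; the displacement $e_{c'}-e_c$ has length $1$, scaled by $\frac1M$). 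Then $d_C(S(1))-d_C(S(0))$ differs from $d_C(S(0))$ by an amount that, by the reverse triangle inequality, is between $-\frac1M$ and $\frac1M$, and I need to show it is \emph{not} concentrated near one value. For this I would pick two coordinates whose flips move the magnetisation in two linearly independent directions (flip some spin $c\to c'$ versus some spin $c\to c''$, or $c\to c'$ versus $c'\to c$); at least one of these produces a displacement with a component of size $\ge\frac{\text{const}}{M}$ along the gradient of $d_C$ at $s$ when $d_C(s)$ is bounded away from where $d_C$ is non-smooth, and when $d_C(s)$ is small ($\le 1/M$ or near a subtriangle boundary) the function $d_C$ is genuinely moving at unit rate in \emph{some} direction so at least one admissible flip changes it by $\Theta(1/M)$ while a symmetric flip changes it by a different amount.

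The cleanest route avoiding case analysis on the non-smooth set: use that there exist two colours $a\ne b$, each held by $\ge cM$ spins (true since all three ratios... actually not always — one ratio could be tiny). Safer: there is always a colour held by $\ge M/3$ spins; call it $a$. Flipping one such spin to colour $b$ versus to colour $c$ (the other two colours) gives two events each of probability $\ge \frac{M/3}{M}\cdot\frac{\text{const}}{1}=\Theta(1)$, producing displacements $\frac1M(e_b-e_a)$ and $\frac1M(e_c-e_a)$, which differ by $\frac1M(e_b-e_c)$, a vector of length $\frac{\sqrt3}{3M}\cdot\sqrt3 = \frac1M$ — wait, $\|e_b-e_c\| = \|e_b\|\sqrt{2-2\cos120^\circ}$... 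I'll just record that $\|e_b-e_c\|$ is an absolute positive constant over $M$. Since the two possible new points are at distance $\Theta(1/M)$ apart, their distances to the (same or possibly different) nearest center cannot both equal $d_C(S(0))$ to within $o(1/M)$: by the triangle inequality $|d_C(x)-d_C(y)|$ can be as large as $\|x-y\|=\Theta(1/M)$, but I need a \emph{lower} bound on the spread. Here I'd use that $d_C$ restricted to any subtriangle is the Euclidean distance to that subtriangle's centroid, a smooth convex function whose gradient has norm exactly $1$ wherever $d_C(s)>0$; so moving by two vectors $u_1,u_2$ with $\|u_1-u_2\|\ge c/M$ changes $d_C$ by amounts whose difference is at least $|\langle \nabla d_C, u_1-u_2\rangle|$ up to second-order $O(1/M^2)$ terms, and by choosing the flip pair among \emph{three} options $e_b-e_a, e_c-e_a, e_b-e_c$ (or their negatives via flipping spins of colour $b$ or $c$ when those are also populated) one always finds a pair whose difference is not orthogonal to $\nabla d_C$, giving spread $\Theta(1/M)$ and hence variance $\ge v_{\min}/M^2$.

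The main obstacle I anticipate is the non-smooth locus of $d_C$ — the boundaries between the four subtriangles (and the degenerate directions where $\nabla d_C \perp (u_1-u_2)$). The honest fix is a pigeonhole over a fixed finite set of candidate flip-pairs: since there are only finitely many directions $\{\pm(e_i-e_j)\}$ and the (one-sided) gradients of $d_C$ lie in a compact set of unit vectors, one can show a uniform constant $c_0>0$ such that for every $s$ at least one candidate pair has $|\langle g, u_1-u_2\rangle|\ge c_0$ for every subgradient direction $g$; combined with the $\Theta(1)$ lower bound on the relevant flip probabilities (which needs the elementary fact that single-flip energy changes at $\tbeta_c$ are $O(1/M)$, so conditional colour probabilities are uniformly in $(\tfrac14,\tfrac12)$ say, for $M$ large) this yields the claimed constant $v_{\min}=0.001$ after tracking the explicit numerical constants. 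I would organise the write-up as: (i) one-step representation of the Glauber update and the resulting magnetisation displacement; (ii) uniform lower bound on single-flip transition probabilities at $\tbeta_c$; (iii) geometry lemma giving a finite family of displacement-pairs with uniformly non-degenerate spread against all subgradients of $d_C$; (iv) assemble via $\Var(X)\ge \tfrac14\min\text{(spread)}^2\cdot\min\text{(prob)}$ and plug in constants.
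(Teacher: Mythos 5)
Your plan is essentially the paper's own argument: lower-bound the single-flip transition probabilities by an absolute constant, observe that the three reachable magnetisation points $s$, $s^{a\to b}$, $s^{a\to c}$ (with $a$ the colour held by at least $M/3$ spins) form an equilateral triangle of side $1/M$, so the distances $d_C$ of these points to the nearest centre must spread by $\Theta(1/M)$, and handle the non-smooth locus of $d_C$ by a separate case analysis --- the paper even uses the same geometric fact you invoke, in the exact (non-linearised) form that for a point outside a unit equilateral triangle the distances to its corners spread by at least $\tfrac{\sqrt3}{2}-\tfrac12$. One supporting claim of yours is wrong and should be repaired in the write-up: the single-flip exponents are $2\tbeta_c(s_j-s_i)=O(1)$, not $O(1/M)$, so the conditional colour probabilities are \emph{not} in $(\tfrac14,\tfrac12)$; they can be as small as roughly $s_i/18$, but this absolute lower bound is all you need, and with it the constant $v_{\min}=0.001$ still (barely) comes out.
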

\begin{proof}
Using the notations of the proof of Lemma \ref{worstdriftlemma}, it is straightforward to show that
\begin{align*}P_{i\to j}(s_1,s_2, s_3)\ge s_i\cdot \frac{1}{2+\exp(\tbeta_c)}=\frac{s_i}{18}.
\end{align*}
Moreover, it is also easy to check that
\begin{align*}
P_{\circlearrowleft}(s_1,s_2, s_3)&\ge \sum_{1\le i\le 3}\frac{s_i}{1+\exp\left[\frac{2}{M}+2\tbeta_c(s_j-s_i)\right]+\exp\left[\frac{2}{M}+2\tbeta_c(s_k-s_i)\right]}\ge \frac{1}{54}.
\end{align*}
The proof is based on the fact that for an equilateral triangle of unit edge length, and a point on the plane outside the triangle, the difference between the distances of the point and the closest and furthest away corners of the triangle is at least $\frac{\sqrt{3}}{2}-\frac{1}{2}$.

Assume first that $(s_1,s_2,s_3)$ more than $\frac{1}{M}$ away from the edges of the central triangle $T_4$ (in $d$ distance). Then without loss of generality, assume that $s_1\ge \frac{1}{3}$. Then 
$P_{1\to 2}(s_1,s_2, s_3)\ge \frac{1}{54}$ and $P_{1\to 3}(s_1,s_2, s_3)\ge \frac{1}{54}$. Since the three positions $s$, $s^{1\to 2}$ and $s^{1\to 3}$ form an equilateral triangle of side length $\frac{1}{M}$, unless a central point is included in this triangle, there is at least  $\frac{1}{M}\left(\frac{\sqrt{3}}{2}-\frac{1}{2}\right)$ difference between two of $d_C(s)$, $d_C(s^{1\to 2})$ and $d_C(s^{1\to 3})$. Therefore, the variance is lower bounded as
\[\Var(d_C(S(1))|S(0)=s)\ge \frac{2}{54}\left(\frac{1}{2}\cdot \left(\frac{\sqrt{3}}{2}-\frac{1}{2}\right)\cdot \frac{1}{M}\right)^2> \frac{0.001}{M^2}.\]
If the triangle formed by $s$, $s^{1\to 2}$ and $s^{1\to 3}$ contains a central point, then one can check that the same bound still holds for $M$ greater than some absolute constant by suitably choosing the direction $i\to j$.

Finally, if $(s_1,s_2,s_3)$ is no more than $\frac{1}{M}$ away from the edges of the central triangle $T_4$, then one can still choose $i\to j$ in a way that we do not exit from the triangle that we are in ($T_1$, $T_2$, $T_3$ or $T_4$), for $M$ larger than some absolute constant, $|d_C(s^{i\to j})-d_C(s)|>\frac{0.4}{M}$, and $s_i\ge \frac{1}{4}$. For this choice, we have $P_{i\to j}(s_1,s_2, s_3)\ge \frac{1}{72}$, and thus
\[\Var(d_C(S(1))|S(0)=s)\ge \frac{2}{72}\left( \frac{1}{2}\cdot \frac{0.4}{M}\right)^2> \frac{0.001}{M^2}.\qedhere\]
\end{proof}

Now we are ready to prove the main result of this section.
\begin{proof}[Proof of Proposition \ref{keyprop4}]
Let $c_1:=1000$ and $c_2:=1$. Let
\begin{align*}r&:= \left\lceil \left(\frac{\sqrt{3}}{12}-c_1 \sqrt{\frac{\log(M)}{M}}\right)/\left(\frac{c_2}{\sqrt{M}}\right)\right\rceil+1, \text{ and }\\
m&:= r+1+\left\lceil \left(\frac{\sqrt{3}}{12}-c_1 \sqrt{\frac{\log(M)}{M}}-\frac{\rho}{8}\right)/\left(\frac{c_2}{\sqrt{M}}\right)\right\rceil.
\end{align*}
Define a sequence of distances $d_0>d_1> \ldots> d_{m}$ as follows,
\begin{align*}
&d_0:=\frac{\sqrt{3}}{6}, \quad d_{r-1}:=\frac{\sqrt{3}}{12}+c_1\sqrt{\frac{\log(M)}{M}},  \quad
d_{r}:=\frac{\sqrt{3}}{12},  \quad d_{r+1}:=\frac{\sqrt{3}}{12}-c_1\sqrt{\frac{\log(M)}{M}}, \\ \\
&d_{m}:=\frac{\rho}{8}, \quad d_k-d_{k+1}:=\frac{c_2}{\sqrt{M}} \text{ for }k\in \{1,\ldots, m-2\}\setminus \{r-1,r\}.
\end{align*}
For $1\le j\le m$, we define the arrival times 
$\tau_j:=\inf\{k\ge 0: d_C(S(k))\le d_j\}$. The proof will consist of subsequently estimating the differences $\tau_{j+1}-\tau_j$ for $0\le j\le m-1$.
Figure \ref{figddistances} illustrates the position of the distances $(d_j)_{0\le j\le m}$, and the function $\varphi(t)$.
\begin{figure}[h!]
  \centering
  \includegraphics[height=4.3cm]{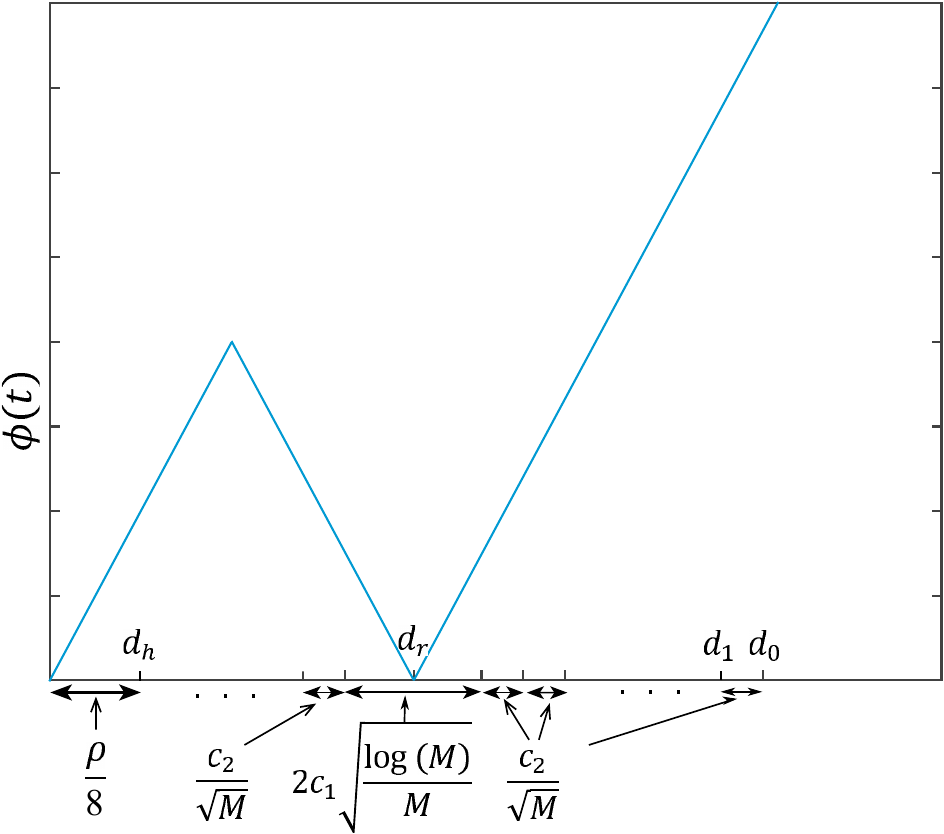}
    \caption{The distances $d_0, d_1, \ldots, d_m$.}
    \label{figddistances}
\end{figure}

Based on Lemma \ref{worstdriftlemma}, we obtain the following simplified drift bounds. For $M$ larger than some absolute constant, for every $s\in \Omega^S$, we have
\begin{align}
\E(d_C(S(1))-d_C(S(0))|S(0)=s)&\le \frac{20}{M^2}.\label{eqdriftb1}
\end{align}
Based on the definition of the distances $d_j$ and the function $\phi(t)$, it follows that for $M$ larger than some absolute constant, for every $j\in \{0,\ldots, m-1\}\setminus \{r, r-1\}$, for every $s\in \Omega^S$ such that 
$d_C(s)\in \left[d_{j+1}-\frac{1}{3}c_1\sqrt{\frac{\log(M)}{M}},d_{j}+\frac{1}{3}c_1\sqrt{\frac{\log(M)}{M}}\right]$, we have
\begin{equation}\label{eqdriftb2}
\E\left(\left. d_C(S(1))-d_C(S(0))\right|S(0)=s\right)\le -\frac{\phi(d_{j})}{2M}.
\end{equation}

First, we are going to estimate $\tau_{r+1}-\tau_{r-1}$. By applying Lemma \ref{lemmaanticoncentration} to $\{d_C(S(\tau_{r-1}+k))\}_{k\ge 0}$ with $\delta= \frac{20}{M^2}$, $R=\frac{1}{M}$, $v=\frac{v_{\min}}{M^2}$ (based on Lemma \ref{lemmaminvar}) and $z=4c_1\sqrt{\frac{\log(M)}{M}}/\sqrt{v}=\frac{4c_1 }{\sqrt{v_{\min}}}\sqrt{M\log(M)}$, it follows that for $M$ larger than some absolute constant, and any $s\in \Omega^{S}$,
\begin{equation}\label{taueq1}
\PP(\tau_{r+1}-\tau_{r-1}\le R_{r-1}+R_r|S(0)=s)\ge c_4,
\end{equation}
where $R_{r-1}=R_{r}:=c_3 M \log(M)$,  $c_3:= \frac{70 c_1^2 }{v_{\min}^2}$ and $c_4:=\frac{1}{6}$.

For $j\in \{0,\ldots, m-1\}\setminus \{r-1,r\}$, let
\[R_j:=\left\lceil \left(2+\frac{32\log(M)}{c_2 M^{1/2} \varphi(d_{j})}\right)\cdot \frac{2c_2 \sqrt{M}}{\varphi(d_{j})}\right\rceil.\]
We are going to show that the probability of $\tau_{j+1}-\tau_{j}$ being greater than $R_j$ is small. 
Let us define the interval $I_j$ as
\[I_j:=\left[d_{j+1}-\frac{1}{3}c_1\sqrt{\frac{\log(M)}{M}},d_{j}+\frac{1}{3}c_1\sqrt{\frac{\log(M)}{M}}\right].\]
Notice that the drift bound \eqref{eqdriftb2} only holds for positions $s$ for which $d_C(s)$ is within interval $I_j$. Since we can possibly exit from this interval within $R_j$ steps from time $\tau_j$, we cannot apply the martingale inequalities directly to $\{d_C(S(\tau_{j}+k))\}_{k\ge 0}$ as previously.
This difficulty can be resolved by a coupling argument similar to the one in the proof of Proposition \ref{keyprop3}.

For every $j\in \{0,\ldots, m-1\}\setminus \{r-1,r\}$, we define a sequence of random variables $D^{(j)}_0, D^{(j)}_{1}, \ldots, $ as follows. First, we set $D^{(j)}_0:=d_C(S(\tau_{j}))$. After this,
we define the rest of the sequence based on the condition that for every $l\in \N$,
\[D^{(j)}_{l+1}-D^{(j)}_{l}:=1_{[d_C(S(\tau_{j}+l))\in I_j ]}\cdot [d_C(S(\tau_{j}+l+1))-d_C(S(\tau_{j}+l))]- 1_{[d_C(S(\tau_{j}+l))\notin I_j ]}\cdot \frac{\phi(d_{j})}{2M}.\]
Due to the definition of this sequence, for every $s\in \Omega^S$, we have 
\begin{align}
&\nonumber\PP(\tau_{j}-\tau_{j-1}>R_j|S(0)=s)\\
&\le \PP\left(\left.\max_{0\le i\le R_j}D^{(j)}_0 >d_j+ \frac{1}{3}c_1\sqrt{\frac{\log(M)}{M}}\right|S(0)=s\right)+ \PP\left(\left.D^{(j)}_{R_j}> d_{j+1}\right|S(0)=s\right).\label{Djcouplingineq}
\end{align}
Based on the definition, it follows that $\left\{D^{(j)}_i\right\}_{i\ge 0}$ satisfies the conditions of Lemma \ref{lemmamoving} with filtration $(\F_l)_{l\ge 0}:=(\sigma\left(S(\tau_{j}),\ldots, S(\tau_{j}+l)\right) )_{l\ge 0}$, $R=\frac{1}{M}$, and $\delta=\frac{\phi(d_{j})}{2M}$.
Therefore by \eqref{eqmovingbackward} and \eqref{eqmovingforward}, for $M$ larger than some absolute constant, for every $j\in \{0,\ldots, m-1\}\setminus \{r-1,r\}$, for every $s\in \Omega^S$, we have $\PP\left(\left.D^{(j)}_{R_j}> d_{j+1}\right|S(0)=s\right)\le \frac{1}{2M^2}$, and
\begin{align*}
&\PP\left(\left.\max_{0\le i\le R_j}D^{(j)}_0 >d_j+ \frac{1}{3}c_1\sqrt{\frac{\log(M)}{M}}\right|S(0)=s\right)\le \frac{1}{2M^2}, 
\end{align*}
therefore
\begin{equation}\label{taueq2}
\PP(\tau_{j}-\tau_{j-1}>R_j|S(0)=s)\le \frac{1}{M^2}.
\end{equation}
Now it is easy to show that for $M$ larger than some absolute constant, 
\begin{equation}\label{Rsumeq}
\sum_{j=0}^{m}R_j\le c_5 M\log(M),
\end{equation}
for some absolute constant $c_5$. Moreover, based on \eqref{taueq1} and \eqref{taueq2}, by the union bound, it follows that for $M$ larger than some absolute constant, for every $s\in \Omega^S$,
\begin{equation}
\PP(\tau_{m}<c_5 M\log(M)|S(0)=s)>\frac{c_4}{2}. 
\end{equation}
Since this result holds for any $s\in \Omega^S$, therefore by repeated application, we obtain that for any $k\in \Z_+$,
\begin{equation}
\PP(\tau_{m}>k(c_5+1)M\log(M)|S(0)=s)\le \left(1-\frac{c_4}{2}\right)^k, 
\end{equation}
which implies the claim of the theorem.
\end{proof}

\subsection{Fast mixing at the center via curvature}\label{secproofprop3}
In this section, we are going to show that once we get near the center of one of the modes, we will quickly approach the stationary distribution restricted to that mode, proving Proposition \ref{keyprop5}. For this, we will use a curvature (i.e. path-coupling) argument.

Let $(\Lambda,d_{\Lambda})$ be a Polish metric space, and $P(x,y)$ a Markov kernel on $\Lambda$. 
For two probability measures $\mmu, \meta$ on $\Lambda$, we define $\Pi(\mmu,\meta)$ as the set of measures $\nu$ on $\Lambda\times \Lambda$ with first and second marginals $\mmu$ and $\meta$ (that is, $\int_{y\in \Lambda} \nu(dx, dy)=\mmu(dx)$ and $\int_{x\in \Lambda} \nu(dx, dy)=\meta(dy)$), and
we define the Wasserstein distance of $\mmu$ and $\meta$, denoted by $W_1(\mmu,\meta)$, as
\[W_1(\mmu,\meta):=\sup_{\nu \in \Pi(\mmu,\meta)} \int_{x,y\in \Lambda} d(x,y) \nu(dx, dy).\]

Then for $x, y\in \Lambda$, $x\ne y$, \cite{Ollivier2} defines 
\[\kappa(x,y):=\frac{d_{\Lambda}(x,y)-W_1(\P(x,\cdot), \P(y,\cdot))}{d_{\Lambda}(x,y)},\]
and calls $\kappa:=\inf_{x,y\in \Lambda, x\ne y}\kappa(x,y)$ the \emph{coarse Ricci curvature}. This quantity can be used to estimate the speed of convergence to the stationary distribution of 
the Markov chain.

\cite{Ollivier2} says that $(\Lambda, d_{\Lambda})$ satisfies the $\epsilon$-geodesic property if for any two points $x,y\in \Lambda$, there exists a set of points $x_0:=x, x_1,\ldots, x_l:=y$ such that
$d_{\Lambda}(x,y)=d_{\Lambda}(x_0,x_1)+\ldots+d_{\Lambda}(x_{l-1},x_l)$ and $d_{\Lambda}(x_i,x_{i+1})\le \epsilon$ for every $0\le i<l$.
Proposition 19 of \cite{Ollivier2} shows that for such spaces, $\kappa=\inf_{x,y\in \Lambda, x\ne y, d_{\Lambda}(x,y)\le \epsilon}\kappa(x,y)$,
that is, it suffices to estimate $\kappa(x,y)$ for pairs of points whose distance is at most $\epsilon$. We will choose $\Lambda$ as a subset of $\Omega^{S}$ (the state space of the colour ratio vector $S$), and define the distance 
\[d_{\Lambda}(x,y):=\frac{M}{2}\left(\left|x_1-y_1\right|+\left|x_2-y_2\right|+\left|x_3-y_3\right|\right),\]
corresponding to the amount of edges one needs to traverse on the hexagonal lattice to get from $x$ to $y$. This distance is $1$-geodesic, thus it will suffice to bound $\kappa(x,y)$ for neighbouring points $x, y$. It turns out that if we choose $\bm{P}$ as the Markov kernel of the magnetisation chain, $\kappa(x,y)$ will not be positive through the whole state space $\Omega^{S}$. This negative curvature still persists even for the local restrictions of the Markov kernel to the 4 modes (the 4 triangles on Figure \ref{fig4modes}). This is caused by the non-convexity of the distribution at the regions separating the modes. For $1\le i\le 4$, we define 4 regions near the center of the modes as
\begin{equation}\label{enlargedinnerregiondefeq}
\Lambda^{(i)}:=\{s\in \Omega^S: -\rho\le s_l-C_{i,l}\le 2\rho \text{ for }l=1,2,3\}.
\end{equation}
The following proposition shows that the curvature is positive in these regions. The proof of this result is included in Section \ref{secproofcurvaturebound} of the Appendix.

\begin{proposition}[Curvature bound]\label{propcurvature}
Consider the state space $(\Lambda^{(m)}, d_{\Lambda})$ as above for some $1\le m\le 4$, and let $\P^{\mathrm{mag}}_{(m)}$ be the restriction of the kernel of  the magnetisation chain $\P^{\mathrm{mag}}$ to $\Lambda^{(i)}$, i.e. for $x, y\in \Lambda^{(m)}$, we let
\[\P^{\mathrm{mag}}_{(m)}(x,dy):=\P^{\mathrm{mag}}\left(x,(\Lambda^{(m)})^c\right)\cdot \delta_x(dy)+1_{[y\in \Lambda^{(m)}]}\cdot \P(x,dy),\]
where $\delta_x(dy)$ corresponds to the Dirac-$\delta$ distribution at $x$. Let $\kappa$ denote the coarse Ricci curvature of the kernel $\P^{\mathrm{mag}}_{(m)}$ on the metric space $(\Lambda^{(m)}, d_{\Lambda})$. Then for $M$ larger than some absolute constant, we have $\kappa\ge \frac{0.01}{M}$.
\end{proposition}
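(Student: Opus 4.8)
The plan is to use the path-coupling / geodesic property already invoked in the section: since the metric $d_{\Lambda}$ on $\Lambda^{(m)}$ is $1$-geodesic, by Proposition 19 of \cite{Ollivier2} it suffices to lower bound $\kappa(x,y)$ for neighbouring points $x,y \in \Lambda^{(m)}$, i.e. $d_{\Lambda}(x,y)=1$. Fix such a pair; without loss of generality $y$ is obtained from $x$ by moving one spin's-worth of mass from colour $a$ to colour $b$, so $y = x^{a\to b}$ in the notation of the drift lemmas, and $\|x-y\|$ corresponds to a single hexagonal-lattice step of size $1/M$ in the $s$-coordinates. I would construct an explicit coupling of the two magnetisation-chain steps $\P^{\mathrm{mag}}_{(m)}(x,\cdot)$ and $\P^{\mathrm{mag}}_{(m)}(y,\cdot)$ and show that the expected coupled distance contracts by a factor $1 - \kappa$ with $\kappa \ge 0.01/M$.

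The key steps, in order: (i) Recall the transition probabilities of the magnetisation chain — from a state $s$, the chain picks a uniformly random spin and re-samples it from the local conditional, giving moves $s\to s^{i\to j}$ with probabilities $P_{i\to j}(s)$ proportional to $s_i \, e^{\tbeta_c(\dots)}$ as written in the proof of Lemma \ref{lemmaminvar}, plus a holding probability $P_{\circlearrowleft}(s)$; note the probabilities at $x$ and $y=x^{a\to b}$ differ only by $O(1/M)$ since $x$ and $y$ differ by $O(1/M)$ in the $s$-coordinates and the rates are smooth. (ii) Build the coupling: couple the choice of which colour-pair transition is attempted, and couple the "accept/move" decisions, so that with probability $1 - O(1/M)$ the two chains make the \emph{same} move, in which case the distance stays $1$; the discrepancy in transition probabilities contributes an $O(1/M)\cdot O(1)$ term. (iii) Show the contraction: the one extra unit of colour-$b$ mass in $y$ makes the transition $b\to a$ more likely in the $y$-chain and $a\to b$ more likely in the $x$-chain; coupling these "opposing" moves lets the two chains coalesce (distance $\to 0$) with probability at least $c/M$ for an absolute constant $c$ — this is where the drift toward $C_m$ (captured by $\varphi>0$ strictly inside, and by the geometry of $\Lambda^{(m)}$ being a small triangle around a genuine local mode where the potential is strictly concave) is used to guarantee the net effect is contractive rather than expansive. (iv) Collect terms: $\E[d_{\Lambda}(X_1,Y_1)] \le 1 - c/M + O(1/M^2) \le 1 - 0.01/M$ for $M$ large, giving $\kappa(x,y) \ge 0.01/M$; then invoke the geodesic reduction to conclude $\kappa \ge 0.01/M$ on all of $\Lambda^{(m)}$.

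The main obstacle is step (iii): proving that the curvature is genuinely \emph{positive} inside $\Lambda^{(m)}$, with an explicit constant, rather than merely nonnegative. Near the separatrices between modes the curvature is negative (as the paragraph preceding the proposition stresses), so one must quantitatively exploit that $\Lambda^{(i)}=\{-\rho\le s_l - C_{i,l}\le 2\rho\}$ is a small enough neighbourhood of a local maximum of the magnetisation log-density that the Hessian there is strictly negative definite, uniformly in $M$, and that this local strict concavity survives the $1/M$-discretisation and the reflecting modification at $\partial\Lambda^{(m)}$ (the $\P^{\mathrm{mag}}(x,(\Lambda^{(m)})^c)\,\delta_x$ term, which can only help contraction since it makes the chains hold together). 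Concretely one needs a Taylor expansion of $\log \mmu^{\mathrm{mag}}_{\tbeta_c,M}$ around $C_m$ — exactly the kind of computation underlying Proposition \ref{keyprop2} — to pin down the curvature constant; verifying $0.01$ works for $\rho=10^{-6}$ and $\tbeta_c=2\log 2$ is the technical heart, which is why the authors defer the full argument to Section \ref{secproofcurvaturebound} of the Appendix.
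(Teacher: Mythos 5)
Your overall strategy coincides with the paper's: reduce to neighbouring points $s$ and $s^{i\to j}$ via the $1$-geodesic property (Proposition 19 of \cite{Ollivier2}), note that the transition probabilities at the two points differ by $\OO(1/M)$, couple the common part of the two one-step distributions, and control the Wasserstein cost of the residual $\OO(1/M)$ mass. So the skeleton is right. However, the proposal has a genuine gap precisely where the proposition's content lies, and one of your justifications points in the wrong direction.

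The gap is step (iii). You assert that coupling the ``opposing'' moves makes the chains coalesce with probability at least $c/M$ and that this is guaranteed by the drift toward $C_m$ ``captured by $\varphi>0$ strictly inside.'' Two problems. First, $\varphi$ vanishes at the centre ($\varphi(0)=0$ and $\varphi(\sqrt{3}/12)=0$), and $\Lambda^{(m)}$ is a $\rho$-neighbourhood of $C_m$ with $\rho=10^{-6}$, so the drift is \emph{not} bounded below there; more importantly, drift is a property of the kernel at a single point, whereas coarse Ricci curvature measures how the kernel \emph{varies} between two nearby starting points. Positive drift toward a centre is compatible with negative curvature (this is exactly what happens near the separatrices, as the paper notes). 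What actually has to be computed is the first-order variation $\frac{\partial \tilde{P}_{k\to l}(s)}{\partial s_j}-\frac{\partial \tilde{P}_{k\to l}(s)}{\partial s_i}$ of each of the six transition probabilities, evaluated at the centres $C_1,\dots,C_4$ (three inequivalent cases by symmetry), followed by an explicit optimal matching of the leftover $\OO(1/M)$ masses; this yields $\kappa\ge 0.025/M$ at the centres. One then extends over $\Lambda^{(m)}$ by a Lipschitz bound on these coefficients (via uniform bounds on the second derivatives of $P_{k\to l}$), which costs $\OO(\rho)/M$ and is why $\rho=10^{-6}$ is needed to retain $\kappa\ge 0.01/M$. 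Your appeal to ``strict concavity of the Hessian of the log-density'' gestures at the right object but is not carried out, and since the claim is a specific numerical constant, the computation is the proof, not a deferred technicality.

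A second, smaller issue: your claim that the reflecting modification at $\partial\Lambda^{(m)}$ ``can only help contraction since it makes the chains hold together'' is unjustified. Near the boundary the two chains can have \emph{different} holding probabilities (one may attempt a move that exits $\Lambda^{(m)}$ while the other does not), so the reflection can in principle increase the coupled distance; the boundary pairs $s,s^{i\to j}$ require a separate (if analogous) verification, which the paper acknowledges rather than dismisses.
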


Let $\tilde{\Lambda}^{(i)}:=\{\sigma\in \Omega: s(\sigma)\in \Lambda^{(i)}\}$. Let $\mmu_{M|\tilde{\Lambda}^{(i)}}$ denote the restriction of $\mmu_{M}$ to $\tilde{\Lambda}^{(i)}$ (i.e. $\mmu_{M|\tilde{\Lambda}^{(i)}}(f)=\frac{\mmu_{M}\left(f\cdot 1_{\tilde{\Lambda}^{(i)}}\right)}{\mmu_M(\tilde{\Lambda}^{(i)})}$, and $\mmu_{M}^{(i)}$ denote the restriction of $\mmu_M$ to $F_M^{(i)}$ (i.e. $\mmu_{M}^{(i)}(f)=\frac{\mmu_{M}\left(f\cdot 1_{F_M^{(i)}}\right)}{\mmu_M(F_M^{(i)})}$). The following lemma bounds the total variational distance of these two distributions. It will be used in the proof of Proposition \ref{keyprop5}.
\begin{lemma}\label{dtvLambdailemma}
For $M$ larger than some absolute constant, for every $1\le i\le 4$, we have
\begin{align}\label{propLambdaieq1}
\dtv\left(\mmu_{M|\tilde{\Lambda}^{(i)}}, \mmu_{M}^{(i)}\right)\le \frac{C_{\Lambda}}{M^2},\quad\text{ and }\quad\mmu_{M|\tilde{\Lambda}^{(i)}}\left(\tilde{\Lambda}^{(i)}\setminus \I_M^{(i)}\right)\le \frac{C_{\Lambda}}{M^2},
\end{align}
for some absolute constant $C_{\Lambda}<\infty$ (the inner regions $\I_M^{(i)}$ are defined as in \eqref{innerregiondefeq}).
\end{lemma}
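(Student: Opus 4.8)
## Proof proposal for Lemma \ref{dtvLambdailemma}

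The plan is to reduce both bounds to precise estimates on the magnetisation distribution $\mmu_{\tbeta_c,M}^{\mathrm{mag}}$, and then use the explicit binomial/Gaussian structure of that distribution near the centres $C_i$ and near the separating boundaries. Recall that $\tilde\Lambda^{(i)}$ is the set of configurations whose magnetisation vector lies in the small equilateral triangle $\Lambda^{(i)}$ of side $\approx 2\sqrt3\,\rho$ centred at $C_i$, while $F_M^{(i)}$ is the full subtriangle $T_i$ (side $\approx\sqrt3/2$, except $T_4$). Since both $\mmu_{M|\tilde\Lambda^{(i)}}$ and $\mmu_M^{(i)}$ are obtained from $\mmu_M$ by conditioning on nested events $\tilde\Lambda^{(i)}\subset F_M^{(i)}$, a standard identity gives
\[
\dtv\!\left(\mmu_{M|\tilde\Lambda^{(i)}},\,\mmu_M^{(i)}\right)
= 1 - \frac{\mmu_M\big(\tilde\Lambda^{(i)}\big)}{\mmu_M\big(F_M^{(i)}\big)}
= \frac{\mmu_M\!\big(F_M^{(i)}\setminus\tilde\Lambda^{(i)}\big)}{\mmu_M\big(F_M^{(i)}\big)},
\]
and likewise $\mmu_{M|\tilde\Lambda^{(i)}}\big(\tilde\Lambda^{(i)}\setminus\I_M^{(i)}\big)=\mmu_M\big(\tilde\Lambda^{(i)}\setminus\I_M^{(i)}\big)/\mmu_M\big(\tilde\Lambda^{(i)}\big)$. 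So everything reduces to (a) a \emph{lower} bound on the denominators $\mmu_M(F_M^{(i)})$ and $\mmu_M(\tilde\Lambda^{(i)})$, both of which should be $\Theta(1)$ (each of the four modes carries a positive fraction of the mass at $\tbeta_c$, and the small triangle around each centre also carries a positive fraction because the log-density is essentially a fixed quadratic near $C_i$, independent of $M$), and (b) an \emph{upper} bound of order $M^{-2}$ on $\mmu_M\big(F_M^{(i)}\setminus\tilde\Lambda^{(i)}\big)$ and on $\mmu_M\big(\tilde\Lambda^{(i)}\setminus\I_M^{(i)}\big)$.

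For part (b), I would use the exact formula
\[
\mmu_{\tbeta_c,M}^{\mathrm{mag}}(s_1,s_2,s_3)=\binom{M}{Ms_1}\binom{M-Ms_1}{Ms_2}\frac{\exp\!\big(M\tbeta_c(s_1^2+s_2^2+s_3^2)\big)}{\tilde Z(\tbeta_c)}
\]
together with Stirling's formula, which writes $\log\mmu_{\tbeta_c,M}^{\mathrm{mag}}(s) = -M\,\Psi(s) + O(\log M)$ for a rate function $\Psi$ whose global minima on $T$ are exactly $C_1,\dots,C_4$ (this is the defining property of $\tbeta_c=2\log 2$ as the first-order transition point, and is the content of the "4 local maxima" discussion; I would cite \cite{BhatnagarRandall} for $\Psi$ and its minimisers). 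Inside $T_i$, the only minimiser is $C_i$ and $\Psi$ is strictly convex with a non-degenerate Hessian there, so a local Laplace/Gaussian expansion shows $\mmu_M$ restricted to $T_i$ concentrates on a $\Theta(M^{-1/2})$-neighbourhood of $C_i$; since $\tilde\Lambda^{(i)}$ and $\I_M^{(i)}$ are \emph{fixed-size} (radius $\Theta(\rho)$, independent of $M$) neighbourhoods of $C_i$ containing this Gaussian bulk, their complements within $T_i$ (resp. within $\tilde\Lambda^{(i)}$) have mass bounded by $C\exp(-cM)\ll M^{-2}$ by a crude large-deviation / summing-the-Gaussian-tails estimate. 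Actually one gets far better than $M^{-2}$, so the statement is comfortably true; the $M^{-2}$ in the lemma is presumably just what is needed downstream in Proposition \ref{keyprop5}. The one subtlety is the region near the shared edges of the subtriangles (where $\Psi$ takes its saddle/barrier values): there one must check $\Psi$ is bounded below by a positive constant strictly larger than $\min\Psi$, which again is the standard picture at $\tbeta_c$ and follows from compactness of $T$ and $\Psi$ being continuous with the saddle value strictly above the well depth.

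The main obstacle, and the part requiring genuine care rather than routine bookkeeping, is making the Laplace expansion of $\log\mmu_{\tbeta_c,M}^{\mathrm{mag}}$ uniform and explicit enough: one has to control the Stirling error terms uniformly over the lattice $\Omega^S$, including points where one of the colour counts is small (near the corners/edges of $T$, where $\binom{M}{Ms_1}$ degenerates and the naive $O(\log M)$ error bound fails). I would handle this by splitting $T$ into (i) a $\Theta(\rho)$-ball around each $C_i$, where a clean quadratic Taylor expansion of $\Psi$ with uniform remainder applies, (ii) the rest of $T$ at distance $\ge c$ from all corners, where Stirling is uniformly valid and $\Psi(s)\ge \min\Psi + c'$ off the $C_i$-balls, and (iii) a neighbourhood of the corners, which I would bound crudely by noting the Hamiltonian term is maximised there but the entropy term $\binom{M}{Ms_1}\binom{M-Ms_1}{Ms_2}$ is exponentially small, so after dividing by $\tilde Z(\tbeta_c)\ge \mmu_M$-mass-near-a-centre $=\Theta(\exp(-M\min\Psi)/\sqrt M)$ these contribute $\le\exp(-cM)$. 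Assembling these three pieces gives the $M^{-2}$ bounds on the complements and the $\Theta(1)$ lower bounds on $\mmu_M(F_M^{(i)})$ and $\mmu_M(\tilde\Lambda^{(i)})$, completing both inequalities in \eqref{propLambdaieq1}.
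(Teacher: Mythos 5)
Your proposal is correct and follows essentially the same route as the paper: the paper likewise reduces both claims to the mass-ratio identity $\dtv\bigl(\mmu_{M|\tilde{\Lambda}^{(i)}}, \mmu_{M}^{(i)}\bigr)=\mmu_{M}\bigl(F_M^{(i)}\setminus \tilde{\Lambda}^{(i)}\bigr)/\mmu_{M}\bigl(F_M^{(i)}\bigr)$ and then bounds the numerators and denominators exactly as you describe. The only difference is presentational: the paper does not redo the Stirling/Laplace analysis here but simply invokes the concentration estimate \eqref{noncentralpartsboundineq} and the mode-mass asymptotics \eqref{eqFjprob}, both already established in the proof of Proposition \ref{keyprop2}.
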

This lemma essentially states that most of the mass of the distribution $\mmu_{M}$ is contained near the centers. The proof of is included in Section \ref{secproofdtvLambdailemma} of the Appendix. Now we are ready to prove Proposition \ref{keyprop5}.

\begin{proof}[Proof of Proposition \ref{keyprop5}]
Let $\P_{\Lambda^{(i)}}$ be a Markov kernel that is the restriction of $\P$ to $\Lambda^{(i)}$, i.e., for every $x,y\in \Lambda^{(i)}$,
\[\P_{\tilde{\Lambda}^{(i)}}(x,dy):=\P\left(x,(\Lambda^{(i)})^c\right)\cdot \delta_x(dy)+1_{[y\in \tilde{\Lambda}^{(i)}]}\cdot \P(x,dy).\]
Let $\sigma(0)$ be a fixed element of $\tilde{\Lambda}^{(i)}$, $\sigma'(0)\sim \mmu_{M|\tilde{\Lambda}^{(i)}}$, and define copies of them as $\Sigma(0):=\sigma(0)$ and $\Sigma'(0)=\sigma'(0)$. 
Let $\{\sigma(i)\}_{i\ge 0}$ and $\{\sigma'(i)\}_{i\ge 0}$ be two Markov chains evolving according to the kernel $P$, and let $\{\Sigma(i)\}_{i\ge 0}$ and $\{\Sigma'(i)\}_{i\ge 0}$ be two Markov chains evolving according to the kernel $P_{\tilde{\Lambda}^{(i)}}$. We are going to  obtain the total variational distance bound by creating a coupling $\{\sigma(i),\sigma'(i),\Sigma(i),\Sigma'(i)\}_{0\le i\le k}$ of these four chains. Let $\mmu^{\mathrm{mag}}_{(i)}$ denote the restriction of the magnetisation distribution $\mmu^{\mathrm{mag}}_{\tbetac,M}$ to $\Lambda^{(i)}$.
First, we note that for $k_1:=\lfloor 300M\log(M)\rfloor$, based on Proposition \ref{propcurvature}, and Corollary 21 of \cite{Ollivier2}, we have
\begin{align}&\label{dtvineq1}\dtv\left((\P^{\mathrm{mag}}_{(i)})^{k_1}(s(\sigma(0)),\cdot), \mmu^{\mathrm{mag}}_{(i)}\right)\le 
W_1((\P^{\mathrm{mag}}_{(i)})^{k_1}(s(\sigma(0)),\cdot), \mmu^{\mathrm{mag}}_{(i)})\\
&\le 
\nonumber 3 \rho M\cdot \left(1-\frac{0.01}{M}\right)^{k_1}\le \frac{1}{2M^2},
\end{align}
for $M$ larger than some absolute constant. In the first step we have used fact that the minimum distance between disjoint two points in our metric $d_{\Lambda}$ is $1$. 

Let $\bm{\nu}$ and $\meta$ be two probability measures on a finite space $W$. Proposition 4.7 of \cite{peresbook} shows the existence of an optimal coupling, i.e. a coupling $(X,Y)$ of two random variables $X\sim \bm{\nu}$ and $Y\sim \meta$ such that $\PP(X\ne Y)=\dtv(\bm{\nu},\meta)$.

We choose the coupling $(s(\Sigma(k_1)),s(\Sigma'(k_1)))$ as an optimal coupling. By \eqref{dtvineq1} means that they 
satisfy that
\begin{equation}\label{cplineq1}\PP(s(\Sigma(k_1))\ne s(\Sigma'(k_1)))\le \frac{1}{M^2}
\end{equation}
for $M$ larger than some absolute constant. Given the joint distribution of $(s(\Sigma(k_1)),s(\Sigma'(k_1)) )$, we choose the joint distribution $(\Sigma(k_1),\Sigma'(k_1))$ arbitrarily among the possibilities. After this, we define $\{\Sigma(i)\}_{1\le i\le k_1-1}$ and $\{\Sigma'(i)\}_{1\le i\le k_1-1}$ based on their conditional distribution given $\Sigma(0), \Sigma(k_1)$, and $\Sigma'(0), \Sigma'(k_1)$, respectively (their joint distribution can be chosen arbitrarily among the possibilities).

Now that $\{\Sigma(i),\Sigma'(i)\}_{0\le i\le k_1}$ is defined, we define $\{\sigma(i)\}_{0\le i\le k_1}$ and $\sigma'(i)\}_{0\le i\le k_1}$ recursively, based on the optimal coupling with $\{\Sigma(i)\}_{0\le i\le k_1}$ and $\{\Sigma'(i)\}_{0\le i\le k_1}$, respectively. That is, if we have already defined $\{\sigma(j)\}_{0\le j\le i}$ for some $0\le i\le k_1-1$, then we define $\sigma(i+1)$ such that $\sigma(i+1)$ and $\Sigma(i+1)$ are optimally coupled, and similarly for $\sigma'(i+1)$ and $\Sigma'(i+1)$. 
Due to the definition of the Markov kernels $\P_{\tilde{\Lambda}^{(i)}}$ and $\P$, we have 
\[\PP(\sigma(k_1)\ne \Sigma(k_1))\le \PP(\sigma(i)\notin \Lambda^{(i)} \text{ for some }1\le i\le k_1),\]
and since $\sigma(0)\in \I_M^{(i)}$, by Proposition \ref{keyprop3}, we have
\begin{equation}\label{cplineq2}\PP(\sigma(k_1)\ne \Sigma(k_1))\le k_1^2 \exp(-\Cesc M).\end{equation}
Based on Lemma \ref{dtvLambdailemma}, we have $\PP(\sigma'(0)\notin \I_M^{(i)})\le \frac{C_{\Lambda}}{M^2}$, and therefore by the same argument, we have
\begin{equation}\label{cplineq3}\PP(\sigma'(k_1)\ne \Sigma'(k_1))\le k_1^2 \exp(-\Cesc M)+\frac{C_{\Lambda}}{M^2}.\end{equation}
At this point, by combining \eqref{cplineq1}, \eqref{cplineq2} and \eqref{cplineq3}, we can see that
\begin{equation}\label{cplineq4}
\PP(s(\sigma_{k_1})\ne s(\sigma'_{k_1}))\le 2k_1^2 \exp(-\Cesc M) +\frac{C_{\Lambda}+1}{M^2}.
\end{equation}
From this point onwards, whenever $s(\sigma_{k_1})= s(\sigma'_{k_1})$, we define the joint distribution $\{\sigma_{i},\sigma'_{i}\}_{k_1\le i\le k}$ conditioned on $(\sigma_{k_1}, \sigma_{k_1}')$  as the coupling given by Lemma \ref{lemmacinmagtocinspinsPotts}. When $s(\sigma_{k_1})\ne s(\sigma'_{k_1})$, the joint distribution $\{\sigma_{i},\sigma'_{i}\}_{k_1\le i\le k}$ is chosen arbitrarily. 
Then based on Lemma \ref{lemmacinmagtocinspinsPotts}, for $M$ larger than some absolute constant, we have
\begin{align}\label{cplineq5}
\PP(\sigma_{k}\ne \sigma'_{k})&\le 2k_1^2 \exp(-\Cesc M) +\frac{C_{\Lambda}+1}{M^2}+\frac{M}{2}\exp\left(-\frac{(k-k_1)}{9M}\right)\\
\nonumber&\le 2k_1^2 \exp(-\Cesc M) +\frac{C_{\Lambda}+2}{M^2}.
\end{align}
Finally, we define the joint distribution of $\{\Sigma'(i), \sigma'(i)\}_{k_1\le i\le k}$ as the optimal coupling in each step as previously. With the same argument as in \eqref{cplineq3}, we have
\begin{equation}\label{cplineq6}\PP(\sigma'(k)\ne \Sigma'(k))\le k^2 \exp(-\Cesc M)+\frac{C_{\Lambda}}{M^2},\end{equation}
for $M$ larger than some absolute constant. By combining \eqref{cplineq5} and \eqref{cplineq6}, we obtain that for $M$ larger than some absolute constant,
\begin{equation}\label{cplineq7}
\PP(\sigma_{k}\ne \Sigma'_{k})\le 3k^2 \exp(-\Cesc M) +\frac{2C_{\Lambda}+2}{M^2},
\end{equation}
and the result follows by noticing that $\Sigma'_{k}$ is distributed according to $\mmu_{M|\tilde{\Lambda}^{(i)}}$ and that by Lemma \eqref{dtvLambdailemma}, 
\[\dtv\left(\mmu_{M|\tilde{\Lambda}^{(i)}}, \mmu_{M}^{(i)}\right)\le \frac{C_{\Lambda}}{M^2}.\qedhere\]
\end{proof}

\section*{Acknowledgements}
AJ \& DP  were supported by AcRF Tier 2 grant R-155-000-143-112. AT and DP were supported by AcRF Tier 1 grant R-155-000-150-133. AJ is affiliated with the Risk  Management Institute and the Centre for Quantitative Finance at the National University of Singapore. We thank Pierre Del Moral for his encouragement and insightful comments. We thank 
Tobias Terzer for noticing several typos and one error in the paper. We thank the anonymous referees for their insightful remarks.


\nocite{EberleMartinelliPTRF}
\nocite{GrafakosFourier}
\nocite{Grimmetttext}
\nocite{peresbook}
\nocite{schweizer2012non}
\nocite{Tuckervalidatednumerics}
\nocite{Courseinmodernanalysis}

\bibliographystyle{plainnat}
\bibliography{References}

\appendix

\section{Appendix}
\subsection{Bounds for the multimodal case with no mixing between modes}\label{SecNoMixing}
In this section, we establish bounds to $V_n(\phi)$ that are applicable to multimodal distributions. Consider a non-trivial partition 
\begin{align} \label{eq.partition}
E = F^{(1)} \sqcup \ldots \sqcup F^{(m)}
\end{align}
of the state space $E$; for simplicity, we assume that $\mmu_k(F^{(r)}) > 0$ for any indexes $1 \leq k \leq n$ and $1 \leq r \leq m$. Importantly, we assume in this section that there is no mixing between the modes in the sense that
\begin{align} \label{nomixingbetweenmodescondeq}
\KK_k(x_r, F^{(r)}) = 1
\end{align}
for any $x_r \in F^{(r)}$ and $1 \leq r \leq m$ and $1\leq k \leq n$; consequently, one can define the restriction $\KK_{k,r}$ of the Markov kernel $\KK_k$ to the mode $F^{(r)}$. In other words, $\KK_{k,r}$ is a Markov kernel on $F^{(r)}$. This very setting is investigated in the articles  \citep{schweizer2012non} and \citep{EberleMartinelliPTRF}. Since $\KK_k$ lets $\mmu_k$ invariant, the restricted Markov kernel $\KK_{k,r}$ lets $\mmu_{k,r}$ invariant where
\begin{align*}
\mmu_{k,r}(S) \defby \frac{\mmu_k(S)}{\mmu_k(F^{(r)})}.
\end{align*}
for a measurable subset $S \subset F^{(r)}$.
Consider the orthogonal projection operator $\wtilde{\mmu}_k:L^2(\mmu_k) \to L^2(\mmu_k)$ defined as
\begin{align*}
\wtilde{\mmu}_k(\phi)
\; = \; 
\sum_{r=1}^m  \mmu_{k,r}(\phi_{| F^{(r)}}) \, 1( \cdot \in F^{(r)}).
\end{align*}
It is important to emphasize that the bounds developed in this section do not exploit global properties of the Markov kernels $\KK_k$. Instead, we leverage a uniform lower bound on the mixing properties of the restricted Markov kernels $\KK_{k,r}$; we set
\begin{align} \label{eq.local.sp.gap}
\gamma_{\KK}^{\mathrm{loc}}
\; = \;
1 - \max\curBK{ \vertiii{\KK_{k,r} - \mmu_{k,r}}_{L^2(\mmu_{k,r})}
\; : \; 
1 \leq k \leq n, \;
1 \leq r \leq m }.
\end{align}
If the Markov kernels $\KK_k$ were reversible, this would translate in a uniform lower bound on the absolute spectral gaps of the restricted Markov kernels $\KK_{k,r}$. Note that, since we assume that there is not mixing between the modes, the spectral gaps of the Markov kernels $\KK_k$ are zero. Furthermore, definition \eqref{eq.local.sp.gap} readily yields that for a test function $\phi \in L^2(\mmu_k)$ we have that
\begin{align*}
\norm{(\KK_k - \widetilde{\mmu}_k) \, \phi}_{L^2(\mmu_k)}^2 
\leq  (1-\gamma_{\KK}^{\mathrm{loc}})^2 \, \norm{ \phi }_{L^2(\mmu_k)}^2.
\end{align*}
%
%
%
%
We establish in this section bound on the asymptotic variance $V_n(\phi)$ that involve the \emph{growth-within-mode} constant $A$ of the partition \eqref{eq.partition} defined as
\begin{align} \label{eq.A}
A = \max\curBK{ \frac{\mmu_{k}(F^{(r)})}{\mmu_{j}(F^{(r)})} \; : \; 0 \leq j < k \leq n, \; 1 \leq r \leq m  }.
\end{align}
One can readily see that the growth-within-mode constant $A$ is larger than one. The main result of this section is the following.
%
%
%
\begin{theorem}[Variance bound for multimodal case without mixing between modes]\label{thmnomixing}
Assume that there is no mixing between the modes, i.e. condition \eqref{nomixingbetweenmodescondeq} holds, and that 
\begin{align} \label{eq.Gamma.gamma.multimodal}
\Gamma_g \; < \; \frac{1}{ (1-\gamma_{\KK}^{\mathrm{loc}})^2}.
\end{align}
For any test function $\phi \in L^{2+}_0(\mmu)$, the CLT \eqref{eq.CLT.statement} holds with an asymptotic variance $V_n(\phi)$ such that
\begin{align*}
V_n(\phi) 
\; \leq \; 
\BK{ 1+\frac{n \, A \, \Gamma_g}{1-(1-\gamma_{\KK}^{\mathrm{loc}})^2 \cdot \Gamma_g} } \cdot \Var_{\mmu}(\phi).
\end{align*}
\end{theorem}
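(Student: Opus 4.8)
The plan is to bound each term $V_{k,n}(\phi)$ in the expansion \eqref{eq.asymp.var.expansion} by combining the absolute-continuity bound $\Gamma_g$ with the local mixing of the restricted kernels $\KK_{k,r}$, exploiting that no mass leaves a mode. First I would, exactly as in the proof of Theorem \ref{thmuni}, peel off one factor $\GG_{k,k+1}$ to get
\[
V_{k,n}(\phi) \;\le\; \Gamma_g \,\norm{\KK_{k+1}\,\GG_{k+1,k+2}\cdots\GG_{n-1,n}\KK_n\,\phi}_{L^2(\mmu_{k+1})}^2 .
\]
The difference from the global case is that $\KK_{k+1}$ only contracts the part of a function orthogonal to the \emph{local} averages, i.e.\ one should subtract $\wtilde{\mmu}_{k+1}$ rather than $\mmu_{k+1}$. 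So I would split
$\KK_{k+1}\psi = (\KK_{k+1}-\wtilde{\mmu}_{k+1})\psi + \wtilde{\mmu}_{k+1}\psi$
with $\psi = \GG_{k+1,k+2}\cdots\KK_n\phi$, and bound the first term by $(1-\gamma_{\KK}^{\mathrm{loc}})\norm{\psi}_{L^2(\mmu_{k+1})}$ using \eqref{eq.local.sp.gap}. This sets up a recursion, but one now carries along a ``leftover'' term coming from the local averages $\wtilde{\mmu}_{k+1}\psi$, which does \emph{not} vanish (it vanishes only when one subtracts the global mean, and here $\mmu(\phi)=0$ gives $\wtilde{\mmu}_n \phi$ summing to zero across modes but not vanishing mode-by-mode).

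The key step is therefore to control the accumulated leftover terms. I would argue that for each fixed mode $r$, the restriction of the whole operator string to $F^{(r)}$ acts within $L^2(\mmu_{k,r})$, and that passing from a norm in $L^2(\mmu_{j})$ to one in $L^2(\mmu_{j+1})$ across modes costs at most a factor $A$ per mode-weight change — this is precisely what the growth-within-mode constant \eqref{eq.A} is designed to capture, since $\mmu_{j+1}(F^{(r)})/\mmu_j(F^{(r)}) \le A$. Telescoping the geometric contraction $(1-\gamma_{\KK}^{\mathrm{loc}})^2$ against the growth $\Gamma_g$ per stage gives, for the ``escaping'' part of $V_{k,n}$, a bound $\BK{\Gamma_g(1-\gamma_{\KK}^{\mathrm{loc}})^2}^{n-k}\Var_\mmu(\phi)$ exactly as before, while each leftover term contributes something of the form $A\,\Gamma_g\,\BK{\Gamma_g(1-\gamma_{\KK}^{\mathrm{loc}})^2}^{\ell}\Var_\mmu(\phi)$ for the appropriate power $\ell$. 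Summing the escaping parts over $k$ gives the geometric series $\le \Var_\mmu(\phi)/(1-(1-\gamma_{\KK}^{\mathrm{loc}})^2\Gamma_g)$ (plus the $V_{n,n}=\Var_\mmu(\phi)$ term, which accounts for the ``$1+$'' in the statement), and summing the leftover contributions — there are $O(n)$ of them, one per stage — produces the factor $n\,A\,\Gamma_g/(1-(1-\gamma_{\KK}^{\mathrm{loc}})^2\Gamma_g)$.

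The main obstacle I anticipate is the bookkeeping of the leftover terms: making precise the claim that the local-average operator $\wtilde{\mmu}_{j}$ interacts with the density multipliers $\GG_{j,j+1}$ in a way that only loses a factor $A$ (not $\Gamma_g$) in going from one mode's weighted $L^2$ norm to the next. Concretely one must show that $\norm{\wtilde{\mmu}_{j}\,\GG_{j,j+1}\psi}^2$ and related quantities, measured against $\mmu_{j-1}$ or $\mmu_j$, can be re-expressed mode-by-mode so that the appearing ratios of mode masses are bounded by $A$, and that the remaining within-mode part is a genuine $L^2(\mmu_{j,r})$-contraction under the next $\KK$. Once the no-mixing assumption \eqref{nomixingbetweenmodescondeq} is used to decompose every norm as a sum over modes and every operator as block-diagonal, this should reduce to a clean per-mode recursion; the rest is assembling the two geometric series under hypothesis \eqref{eq.Gamma.gamma.multimodal}, which guarantees $\Gamma_g(1-\gamma_{\KK}^{\mathrm{loc}})^2<1$ so that everything converges. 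Finally, the CLT \eqref{eq.CLT.statement} itself follows from Theorem~1 of \cite{ChopinCLT} once $\phi\in L^{2+}_0(\mmu)$, as in Theorem \ref{thmuni}.
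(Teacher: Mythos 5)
Your proposal is correct and follows essentially the same route as the paper's proof: one peels off $\GG_{k,k+1}$, uses the Pythagorean identity $\norm{\KK_{k+1}\psi}_{L^2(\mmu_{k+1})}^2=\norm{(\KK_{k+1}-\wtilde{\mmu}_{k+1})\psi}_{L^2(\mmu_{k+1})}^2+\norm{\wtilde{\mmu}_{k+1}\psi}_{L^2(\mmu_{k+1})}^2$ (valid because $\wtilde{\mmu}_{k+1}\KK_{k+1}=\KK_{k+1}\wtilde{\mmu}_{k+1}=\wtilde{\mmu}_{k+1}$), and controls the leftover via the telescoping identity showing that $\wtilde{\mmu}_{k+1}\GG_{k+1,k+2}\KK_{k+2}\cdots\GG_{n-1,n}\KK_n\phi$ equals the constant $\frac{\mmu_n(F^{(r)})}{\mmu_{k+1}(F^{(r)})}\,\mmu_{n,r}(\phi)$ on each mode $F^{(r)}$, so that exactly one factor of $A$ appears per leftover term, as you anticipated. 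The one bookkeeping refinement in the paper is to absorb the base case via $V_{n,n}=\Var_{\mmu}(\phi)\le\Gamma_g A\,\Var_{\mmu}(\phi)$, so that the recursion $V_{k,n}\le\Gamma_g(1-\gamma_{\KK}^{\mathrm{loc}})^2V_{k+1,n}+\Gamma_g A\,\Var_{\mmu}(\phi)$ yields the uniform bound $V_{k,n}\le\Gamma_g A\,\Var_{\mmu}(\phi)/\bigl(1-\Gamma_g(1-\gamma_{\KK}^{\mathrm{loc}})^2\bigr)$ for every $k<n$; your separate ``escaping part'' series would otherwise leave an extra additive term and effectively prove the stated bound with $n$ replaced by $n+1$.
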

%
%
\begin{proof}
The approach is similar to the proof of Theorem \ref{thmuni}. Since $\KK_{k,r}$ lets $\mmu_{k,r}$ invariant, we have that  
$\KK_k \,\wtilde{\mmu}_k = \wtilde{\mmu}_k \, \KK_k = \wtilde{\mmu}_k $; for any test function $\phi \in L^2(\mmu_k)$ we thus have that 
$\norm{\KK_k \phi}_{L^2(\mmu_k)}^2 = \norm{(\KK_k - \wtilde{\mmu}_k) \phi}_{L^2(\mmu_k)}^2 + \norm{\wtilde{\mmu}_k \phi}_{L^2(\mmu_k)}^2$. It has already been established in the proof of Theorem \ref{thmuni} that 
\begin{align*}
V_{k,n}(\phi) \leq \Gamma_g \, \norm{ \KK_{k+1} \, \GG_{k+1,k+2} \cdot\ldots \cdot\GG_{n-1,n}\KK_n \phi}^2_{L^2(\mmu_{k+1})}.
\end{align*}
Furthermore, for $0 \leq k \leq n-1$, we have that
\begin{align*}
&\norm{ \KK_{k+1} \, \GG_{k+1,k+2} \cdot\ldots \cdot\GG_{n-1,n}\KK_n \phi}^2_{L^2(\mmu_{k+1})}
\\
&\leq  
\norm{ (\KK_{k+1} - \wtilde{\mmu}_{k+1}) \, \GG_{k+1,k+2} \, \cdot\ldots \cdot\GG_{n-1,n}\KK_n \phi}^2_{L^2(\mmu_{k+1})}\\
&+ 
\norm{ \wtilde{\mmu}_{k+1} \, \GG_{k+1,k+2} \, \cdot\ldots \cdot\GG_{n-1,n}\KK_n \phi}^2_{L^2(\mmu_{k+1})}\\
&\leq
(1-\gamma_{\KK}^{\mathrm{loc}})^2 \, 
 \norm{\GG_{k+1,k+2} \, \cdot\ldots \cdot\GG_{n-1,n}\KK_n \phi}^2_{L^2(\mmu_{k+1})}\\
 &+
\norm{ \wtilde{\mmu}_{k+1} \, \GG_{k+1,k+2} \, \cdot\ldots \cdot\GG_{n-1,n}\KK_n \phi}^2_{L^2(\mmu_{k+1})}\\
&=
(1-\gamma_{\KK}^{\mathrm{loc}})^2 \, 
V_{k+1,n}(\phi) +
\norm{ \wtilde{\mmu}_{k+1} \, \GG_{k+1,k+2} \, \cdot\ldots \cdot\GG_{n-1,n}\KK_n \phi}^2_{L^2(\mmu_{k+1})}.
\end{align*}
Algebra yields that for a test function $\phi \in L^2(\mmu)$ we have that 
\begin{align*}
\wtilde{\mmu}_{k+1} \GG_{k+1,k+2} \KK_{k+2} \, \cdot\ldots \cdot\GG_{n-1,n}\KK_n \phi
\; = \; 
\sum_{r=1}^m  \curBK{ \frac{\mmu_{n}(F^{(r)})}{\mmu_{k+1}(F^{(r)})} } \, \wtilde{\mmu}_{n} \phi.
\end{align*}
By definition \eqref{eq.A} of the growth-within-mode constant $A \geq 1$ and the fact that $\mmu(\phi) = 0$, it follows that
\begin{align*}
&\norm{ \wtilde{\mmu}_{k+1} \, \GG_{k+1,k+2} \, \cdot\ldots \cdot\GG_{n-1,n}\KK_n \phi}^2_{L^2(\mmu_{k+1})}
=
\sum_{r=1}^m \mmu_{k+1}(F^{(r)}) \, \curBK{\frac{\mmu_{n}(F^{(r)})}{\mmu_{k+1}(F^{(r)})}}^2 \, \mmu_{n,r}(\phi)^2\\
&\quad =
\sum_{r=1}^m \curBK{\frac{\mmu_{n}(F^{(r)})}{\mmu_{k+1}(F^{(r)})}} \, \mmu_{n}(\phi_{|F^{(r)}})^2
\leq
A \times \sum_{r=1}^m \mmu_{n}(\phi_{|F^{(r)}})^2
\leq 
A \times  \Var_{\mmu}(\phi).
\end{align*}
Consequently, 
$V_{k,n}(\phi) \leq \Gamma_g \, (1-\gamma_{\KK}^{\mathrm{loc}})^2 \, V_{k+1,n}(\phi) + \Gamma_{g} \, A \, \Var_{\mmu}(\phi)$.
Since $V_{n,n}(\phi) = \Var_{\mmu}(\phi)$ $\leq \Gamma_{g} \, A \, \Var_{\mmu}(\phi)$, iterating this bound and exploiting the fact that $\Gamma_{g} \, (1-\gamma_{\KK}^{\mathrm{loc}})^2 < 1$ yields that
\begin{align*}
V_{k,n}(\phi) \leq \frac{\Gamma_{g} A}{1-\Gamma_{g} \, (1-\gamma_{\KK}^{\mathrm{loc}})^2} \Var_{\mmu}(\phi).
\end{align*}
Since $V_n(\phi) = \sum_{k=0}^{n} V_{k,n}(\phi)$, the conclusion follows.
\end{proof}

Theorem \ref{thmnomixing} gives an improvement over the quadratic  bound provided by Theorem $1.4$ of \cite{schweizer2012non}. Note that the variance bound in Theorem \ref{thmnomixing} grows linearly with  the number of resampling stages $n \geq 1$, in contrast with Theorem \ref{thmuni} where the variance was bounded independently of $n$. This is caused by the variance in the ratios of particles in different modes, introduced by the repeated resampling steps. In general, this dependence on $n \geq 1$ cannot be removed as can be seen by considering a discrete state space $E$ with only two elements, a partition with two modes, and Markov kernels with no mixing between the modes. Theorem \ref{thmwithmixing1} is rather different from Theorem \ref{thmnomixing} where no mixing was allowed between the modes. It might be tempting to conjecture that the results of Theorem \ref{thmnomixing} should hold even when there is mixing between the modes since, at an heuristic level, one may think that mixing between the modes can only help the Markov chain to reach its stationary distribution more efficiently, and thus it should decrease the asymptotic variance of the SMC sampler. We have found out that this is not always the case. In the following counterexample, the state space $E=\{1,2,3,4\}$ consists of only $4$ elements. The SMC algorithm consists of two stages, $\mmu_0$ is uniform on $E$, and
$\mmu_1\defby\mmu$ takes the elements of $1,2,3,4$ with probabilities $0.1319, 0.1778, 0.0638,    0.6265$. The reversible transition kernel $\KK_1$ is given by
\begin{align*}
\KK_1\defby\left(\begin{matrix}
0.5520   & 0.1858   & 0.0413   & 0.2209\\
0.1378  &   0.7837 &   0.0769 &   0.0016\\
0.0853   & 0.2145  &  0.6311  &  0.0691\\
0.0465    &0.0004    & 0.0070  &  0.9460
\end{matrix}\right).
\end{align*}
The modes are chosen as $F^{(1)}\defby\{1,2\}$ and $F^{(2)}\defby\{3,4\}$. Then the version of $\KK_1$ that does not allow for mixing between the modes is setting the transition probabilities between these two sets to $0$, and changing the probabilities of staying in place by the corresponding amount. This transition matrix is denoted by $\KK_1^{\rm{nomix}}$, and in our case, it equals
\begin{align*}
\KK_1^{\rm{nomix}}\defby\left(\begin{matrix}
 0.8142   & 0.1858   & 0   & 0\\
0.1378  &   0.8622 &   0 &   0\\
0   & 0  &  0.9309  &  0.0691\\
0    &0    & 0.0070  &  0.9930
\end{matrix}\right).
\end{align*}
The function $\phi$ is chosen to take values $0.3973,-0.5697,-0.3222, 0.1109$ on $1,2,3,4$, respectively. Based on \eqref{eq.asymp.var.expansion}, the asymptotic variance for the first case (with mixing) equals $V_1(\phi)=0.1669$, while in the second case (without mixing) it equals $V_1^{\rm{nomix}}(\phi)=0.1579$. So despite the fact that $\KK_1$ has better global mixing properties than $\KK_1^{\rm{nomix}}$, the SMC algorithm based on $\KK_1$ still has bigger asymptotic variance than the one based on $\KK_1^{\rm{nomix}}$.

\subsection{Proof of Lemma \ref{worstdriftlemma}}
For $1\le i,j\le 3, i\ne j$ define $s^{i\to j}:=s+\frac{e_j}{M}-\frac{e_i}{M}$ and let $P_{i\to j}(s_1,s_2, s_3)$ denote the probability a step in the magnetisation chain started at $(s_1,s_2, s_3)$ changes a spin of colour $i$ to colour $j$. Let $P_{\circlearrowleft}(s_1,s_2, s_3)$ denote the probability of staying in place. Let $k$ be the colour in $\{1,2,3\}$ that is different from $i$ and $j$. Then it is straightforward to show that
\begin{align*}P_{i\to j}(s_1,s_2, s_3)&=
\frac{s_i\exp\left[\frac{2}{M}+2\tbeta_c(s_j-s_i)\right]}{1+\exp\left[\frac{2}{M}+2\tbeta_c(s_j-s_i)\right]+\exp\left[\frac{2}{M}+2\tbeta_c(s_k-s_i)\right]}\\
&=\frac{s_i\exp(2\tbeta_c s_j)}{\exp(-\frac{2}{M}+2\tbeta_c s_i)+\exp(2\tbeta_c s_j)+\exp(2\tbeta_c s_k)}.
\end{align*}
Using these notations, we have
\begin{align*}
\E(d_C(S(1))-d_C(S(0))|S(0)=s)= \sum_{1\le i,j\le 3, i\ne j} P_{i\to j}(s_1,s_2,s_3)\left(d_C\left(s^{i\to j}\right)-d_C(s)\right).
\end{align*}
Figure \ref{fig4modes} illustrates the position of the modes and the change of distance from the one of the centres from $r:=d_C(s)$ to $r':=d_C\left(s^{i\to j}\right)$ by moving along direction $i\to j$ with distance $h:=\frac{1}{M}$. By Pythagoras' theorem, we can see that $(r')^2=(h\sin(\beta))^2 + (r-h\cos(\beta))^2$, which implies that $|r'-(r-h\cos(\beta))|\le \frac{h^2}{2(r-h)}$ for $r>h$.

Suppose that $s\in T_I$ for some $1\le I\le 4$. Let 
\[\ol{s}=(\ol{s}_1,\ol{s}_2,\ol{s}_3):=C_I-s=(C_{I,1}-s_1,C_{I,2}-s_2,C_{I,3}-s_3)\]
be the vector from $s$ to $C_I$. Using the standard 2 dimensional Euclidean scalar product between the vector corresponding these barycentric coordinates and $e_j-e_i$, we have
\begin{align*}h\cos(\beta)&=\frac{1}{M}\frac{\left<\ol{s}_1e_1+\ol{s}_2 e_2 +\ol{s}_3e_3,e_j-e_i\right>}{d_C(s)}=\frac{1}{2 M}\frac{\ol{s}_j-\ol{s}_i}{d_C(s)},
\end{align*}
where we have used the fact that $\left<e_l, e_m\right>=\frac{1}{3}\cdot 1_{[l=m]}- \frac{1}{6}\cdot 1_{[l\neq m]}$. Moreover, based on the definition of the Glauber dynamics, one can show that 
\begin{align*}&\left|P_{i\to j}(s_1,s_2, s_3)-\frac{s_i\exp\left[2\tbeta_c s_j\right]}{\exp\left[2\tbeta_c s_1 \right]+\exp\left[2\tbeta_c s_2\right]+\exp\left[2\tbeta_c s_3 \right]}\right|\le \exp\left(\frac{2}{M}\right)-1\le \frac{8}{M}.
\end{align*}
By combining these facts, it follows that for any $s\in \Omega^{S}$ with $d_C(s)>\frac{1}{M}$, we have
\begin{align}
\nonumber&\E(d_C(S^s)-d_C(s))-\left(\frac{1}{2 M}\frac{1}{d_C(s)}\sum_{1\le i,j\le 3, i\ne j} \frac{(\ol{s}_j-\ol{s}_i)s_i \exp\left[2\tbeta_c s_j\right]}{\exp\left[2\tbeta_c s_1\right]+\exp\left[2\tbeta_c s_2\right]+\exp\left[2\tbeta_c s_3\right]}\right)
\\
\label{driftupperbndeq1}&\le \frac{1}{M^2}\cdot \left(8+\frac{1}{2(d_C(s)-1/M)}\right).
\end{align}
Note that although  $s$ and $s^{i\to j}$ can be in different triangles, the fact that $d_C(s^{i\to j})\le d_C(s^{i\to j},C_I)$ guarantees that \eqref{driftupperbndeq1} is still valid in such cases. Let us denote 
\begin{equation}
L(s):=-\frac{1}{2d_C(s)}\sum_{1\le i,j\le 3, i\ne j} \frac{(\ol{s}_j-\ol{s}_i)s_i \exp\left[2\tbeta_c s_j\right]}{\exp\left[2\tbeta_c s_1\right]+\exp\left[2\tbeta_c s_2\right]+\exp\left[2\tbeta_c s_3\right]},
\end{equation}
then the statement of our lemma would follow from the inequality $L(s)\le -\varphi(d_C(s))$.
Figure \ref{FigLsvarphisineq} illustrates numerically this inequality (the small circles correspond to the values of $L(s)$ in function of $d_C(s)$ for points $s$ chosen on a fine triangular grid on $T$, while the continuous curve is $-\varphi(d_C(s))$ as a function of $d_C(s)$). 

\begin{figure}
\begin{center}
\includegraphics[height=4cm]{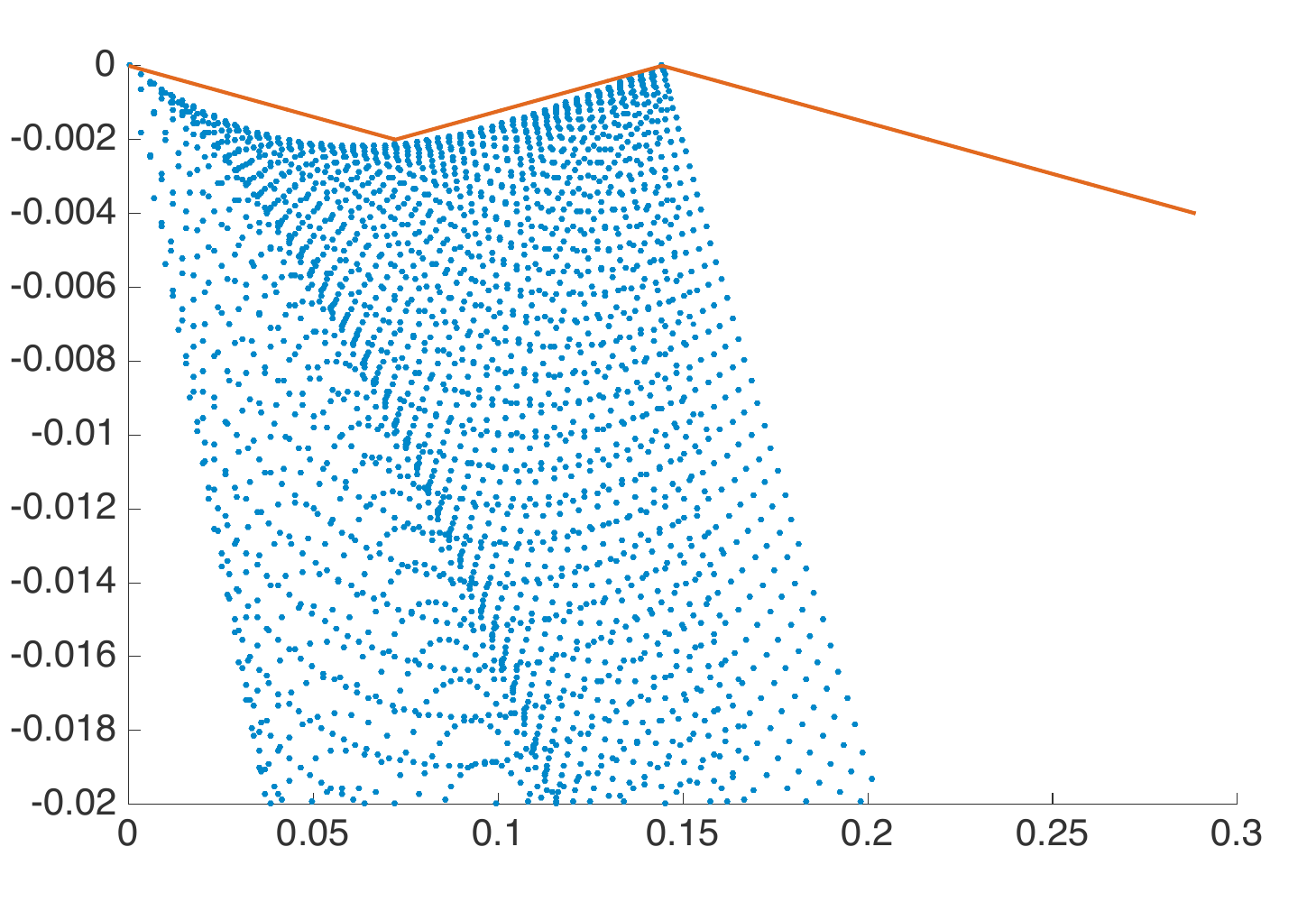}
\end{center}
\caption{Illustration of the inequality $L(s)\le -\varphi(d_C(s))$.}
\label{FigLsvarphisineq}
\end{figure}
This inequality can be proven as follows. First, it is not difficult to show that both $L(s)$ and $\varphi(d_C(s))$ are Lipschitz with respect to the $d_C(s)$ distance, with some finite constants $C_L$ and $C_{\phi}$. Now if we take a sufficiently fine triangular grid on $T$, then one can rigorously numerically check the inequality using interval arithmetics at each grid-point, and by the Lipschitz condition, a small neighbourhood around them (see the monograph \cite{Tuckervalidatednumerics} for an introduction to interval arithmetics). Based on such a numerical check, one can see that only regions where the inequality is not yet proven are small neighbourhoods around $C_1$, $C_2$, $C_3$ and $C_4$, and the half points $C_{14}:=(C_1+C_4)/2$, $C_{24}:=(C_2+C_4)/2$,
$C_{34}:=(C_3+C_4)/2$. At all of these points, $L(s)=-\varphi(d_C(s))=0$. Using a Taylor expansion with remainder term at these points, the inequality can be extended to these neighbourhoods as well, completing the proof. 

\subsection{Proofs of preliminary lemmas}\label{Secprelimlemmaproof}
\begin{proof}[Proof of Lemma \ref{lemmaanticoncentration}]
For $k\in \Z_+$, let $D_{k}:=X_{k}-X_{k-1}$,  and $\ol{D}_k:=D_{k}-\E(D_{k}|\F_{k-1})$. For $l\in \N$, let
\begin{equation}\label{olXldef}
\ol{X}_l:=x_0+\sum_{k=1}^{l}\ol{D}_k.
\end{equation}
It is clear that $(\ol{X}_{k})_{k\ge 0}$ is a martingale. Using assumptions (1) and (2), we have $\E(D_{k}|\F_{k-1})$ $\le \delta\le R$, and thus $\ol{X}_t\ge X_t-\delta t$ and $|\ol{X}_{k+1} - \ol{X}_{k}|\le 2R$ almost surely for every $k\ge 0$. From assumption (4), it follows that $\E(\ol{D}_k^2|\F_{k-1})\ge v$ for every $k\ge 1$. 

Let $\tau:=\inf\{k\ge 0: |\ol{X}_k|>z\sqrt{v}\}$ be the exit time of $(\ol{X}_t)_{t\ge 0}$ from the set $[-z\sqrt{v},z\sqrt{v}]$. 
Using the fact that the increments are bounded by $2R$ in absolute value, we have that $|\ol{X}_{\min(\tau, t)}|\le z\sqrt{v}+2R$. Moreover, it is easy to see that $\E(\ol{X}_k^2-\ol{X}_{k-1}^2|\F_{k-1})\ge v$, so $Y_k:=\ol{X}_k^2-kv$ and $Y_{\min(\tau, k)}$ are submartingales, in particular,
\[\E(Y_{\min(\tau, k)})=\E(\ol{X}_{\min(\tau, t)}^2-\min(\tau,k)v)\ge 0.\]
By the bounded convergence theorem, it follows that 
$\lim_{k\to \infty}\E(\ol{X}_{\min(\tau, t)}^2)=\E(\ol{X}_{\tau}^2)\le (z\sqrt{v}+2R)^2$,
and by the monotone convergence theorem, we have 
$\lim_{k\to \infty} \E(\min(\tau,k)v)=\E(\tau)v$, so we have $\E(\tau)\le \frac{(z\sqrt{v}+2R)^2}{v}$, and by Markov's inequality, this implies that
\begin{equation}\label{exitlemmaeq1}
\PP\left(\tau > 4(z+2R/\sqrt{v})^2\right)\le \frac{1}{4}.
\end{equation}
By the above bound on $\E(\tau)$, it follows that $\PP(\tau<\infty)=1$. By applying the optional stopping theorem (see Section 12.5 of \cite{Grimmetttext}) to the martingale $(\ol{X}_k)_{k\ge 0}$, it follows that $\E(\ol{X}_\tau)=0$, and since $\ol{X}_\tau\in [-z\sqrt{v}-2R, -z\sqrt{v})\cup (z\sqrt{v}, z\sqrt{v}+2R]$, by writing 
\[0=\E(\tau)=\E\left( \II[ \ol{X}_\tau>0]\cdot (z\sqrt{v}+(\ol{X}_\tau-z\sqrt{v}))+ \II[ \ol{X}_\tau<0]\cdot (-z\sqrt{v}+(\ol{X}_\tau+z\sqrt{v}))\right),\]
one can show that 
\begin{equation}\label{exitlemmaeq2}
\PP(\ol{X}_{\tau}<-z\sqrt{v})\ge \frac{1}{2}-\frac{R}{z\sqrt{v}}.
\end{equation}
Using \eqref{exitlemmaeq1}, \eqref{exitlemmaeq2} and the condition $z>12\frac{R}{\sqrt{v}}$, by the union bound, it follows that 
\[\PP\left(\ol{X}_{\tau}<-z\sqrt{v} \text{ and } \tau \le 4(z+2R/\sqrt{v})^2\right)\ge \frac{1}{2}-\frac{1}{12}-\frac{1}{4}=\frac{1}{6},\]
and the claim of the theorem now follows by rearrangement using condition (2).
\end{proof}

\begin{proof}[Proof of Lemma \ref{lemmamoving}]
\eqref{eqmovingbackward} is a direct consequence of the Azuma-Hoeffding inequality applied to $\ol{X}_k$ (as defined in \eqref{olXldef} of the previous lemma). For \eqref{eqmovingforward}, we only need to notice that by the Azuma-Hoeffding inequality applied to $\ol{X}_k$, 
\[\PP(X_k>T)\le \PP(\ol{X}_k>T+\delta k)\le \exp\left(-\frac{(T+\delta k)^2}{2 k R^2}\right)\le \exp\left(-\frac{T\delta}{R^2}\right),\]
and the result follows by summing over $1\le k\le l$.
\end{proof}

\begin{proof}[Proof of Lemma \ref{lemmacinmagtocinspinsPotts}]
The proof of this lemma is similar to the proof of Lemma 2.9 of \cite{LevinLuczakPeres}.
We are going to describe the coupling recursively. First, note that $\sigma(0)$ and $\tsigma(0)$ are constants. For $t\ge 0$, suppose that $((\sigma(k))_{0\le k\le t},(\tsigma(k))_{0\le k\le t})$ is already defined, and let $I_t$ be chosen uniformly from $[M]:=\{1,2,\ldots, M\}$. Based on the definition of the Glauber dynamics, it is easy to show that the probability that in the next step we replace $\sigma_{I_t}(t)=i$ by a different colour $j$ or $k$ is given by 
\begin{align*}p_j&:=\frac{\exp[2\tbeta_c s_j(\sigma(t))]}{\exp[2\tbeta_c s_j(\sigma(t))]+\exp[2\tbeta_c s_k(\sigma(t))]+\exp[2\tbeta_c s_i(\sigma(t))-2/M]}\\
p_k&:=\frac{\exp[2\tbeta_c s_k(\sigma(t))]}{\exp[2\tbeta_c s_j(\sigma(t))]+\exp[2\tbeta_c s_k(\sigma(t))]+\exp[2\tbeta_c s_i(\sigma(t))-2/M]},
\end{align*}
and the probability of staying in place is given by
\[p_i:=\frac{\exp[2\tbeta_c s_i(\sigma(t))-2/M]}{\exp[2\tbeta_c s_j(\sigma(t))]+\exp[2\tbeta_c s_k(\sigma(t))]+\exp[2\tbeta_c s_i(X(t))-2/M]}.\]
Let $Z_{t+1}$ be a random variable taking values in the set $\{1,2,3\}$ according to these probabilities (independently of the previously defined random variables).

We let $\sigma_{I_t}(t+1):=Z_{t+1}$, and set the other spins to the same as in $\sigma(t)$. If $\sigma_{I_t}(t)=\tsigma_{I_t}(t)$, then we let $\tsigma_{I_t}(t+1):=Z_{t+1}$, and keep the rest of its spins the same as in $\tilde{X}(t)$. If $\sigma_{I_t}(t)\ne \tsigma_{I_t}(t)$, then we pick $\tilde{I}_t$ uniformly from 
\[\{l\in [M]: \tsigma_{t}(l)=\sigma_{I_t}(t), \tsigma_l(t)\ne \sigma_l(t)\},\]
set $\tsigma_{\tilde{I}_t}(t+1):=Z_{t+1}$, and keep the other spins in $\tsigma(t+1)$ the same as in $\tsigma(t)$.

By this choice, we guarantee that $s(\sigma(t))=s(\tsigma(t))$ for $t\ge 0$. Let us denote the Hamming distance of $\sigma(t)$ and $\tsigma(t)$ by $D(t):=\sum_{l=1}^{M}1_{[\sigma_l(t)\ne \tsigma_l(t)]}$. Based on the definition of this coupling, we have that $D(t+1)\le D(t)$. Moreover, if $\sigma_{I_t}(t)\ne \tsigma_{I_t}(t)$, but $Z_{t+1}=\sigma_{\tilde{I}_t}(t)$ or $Z_{t+1}=\tsigma_{I_t}(t)$, then $D(t+1)$ decreases by one compared to $D(t)$. The probability of each of these two events is at least
$\frac{D(t)}{M}\cdot \frac{1}{2+\exp(2\tbeta_c)}=\frac{D(t)}{18M}$, therefore
\[\E(D(t+1)-D(t)|\sigma(t),\tsigma(t))\le -\frac{D(t)}{9 M}.\]
For any $t\in \N$, let $Y(t):=D(t)\cdot \left(1-\frac{1}{9M}\right)^{-t}$. Then it is easy to show that this is a non-negative supermartingale, and therefore
\[\E(D(t))\le \E(D(0))\cdot \left(1-\frac{1}{9M}\right)^{-t}\le M\left(1-\frac{1}{9M}\right)^{t}.\]
The result now follows from the fact that $\E(D(t))\ge 2 \PP(\tau>t)$.
\end{proof}

\subsection{Proof of curvature bound}\label{secproofcurvaturebound}
\begin{proof}[Proof of Proposition \ref{propcurvature}]
Consider bounding the curvature $\kappa(s,s^{i\to j})$, where $s^{i\to j}:=s+\frac{e_j}{M}-\frac{e_i}{M}$, for some $i\ne j$ in $\{1,2,3\}$. By the definition of $\kappa$, we have
\[\kappa(s,s^{i\to j})=1-W_1(\P^{\mathrm{mag}}_{(m)}(s,\cdot), \P^{\mathrm{mag}}_{(m)}(s^{i\to j}, \cdot)).\]
To compute this expression, we note that there are 7 possible moves. We stay in place with probability $P_{\circlearrowleft}(s)$, or move to $s^{k\to l}$ with probability $P_{k\to l}(s)$.
To understand the change in the distributions $\P^{\mathrm{mag}}_{(m)}(s,\cdot)$ and $\P^{\mathrm{mag}}_{(m)}(s^{i\to j},\cdot)$, we are going to look at the partial derivatives of the transition probabilities. By Taylor's theorem with the remainder term, we have
\begin{align}\label{eqPklbound}
&\left|\left(P_{k\to l}(s^{i\to j})-P_{k\to l}(s)\right)-\frac{1}{M}\left(\frac{\partial P_{k\to l}(s)}{\partial s_j}-\frac{\partial P_{k\to l}(s)}{\partial s_i}\right) \right|
\\
\nonumber&\le \frac{1}{M^2}\sup_{s\in \Lambda^{(m)}}\left(
\left|\frac{\partial^2 P_{k\to l}(s)}{\partial^2 s_i}\right| +\left|\frac{\partial^2 P_{k\to l}(s)}{\partial^2 s_j}\right| +\left|\frac{\partial^2 P_{k\to l}(s)}{\partial s_i\partial s_j}\right| \right).
\end{align} 
It is straightforward to show that 
\begin{equation}\label{eqsecderbound}
\left|\frac{\partial^2 P_{k\to l}(s)}{\partial s_i\partial s_j}\right|\le 24 \text{ and }\left|\frac{\partial^2 P_{\circlearrowleft}(s)}{\partial s_i\partial s_j}\right|\le 144\end{equation}
for any $1\le i, j\le 3$, $1\le k, l \le 3, k\ne l$.
Moreover, if we define the $M$-free versions of $P_{k\to l}(s)$ as 
\[\tilde{P}_{k\to l}(s_1,s_2, s_3):=\frac{s_k\exp(2\tbeta_c s_l)}{\exp(2\tbeta_c s_1)+\exp(2\tbeta_c s_2)+\exp(2\tbeta_c s_3)},\]
then one can show that
\begin{equation}\label{eqPtildabound}\left|\frac{\partial P_{k\to l}(s)}{\partial s_j}-\frac{\partial \tilde{P}_{k\to l}(s)}{\partial s_j}\right|\le \frac{12}{M}.\end{equation}
From equations \eqref{eqPklbound} and \eqref{eqPtildabound}, we can see that
\begin{equation}
P_{k\to l}(s^{i\to j})-P_{k\to l}(s)=\frac{1}{M}\left(\frac{\partial \tilde{P}_{k\to l}(s)}{\partial s_j}-\frac{\partial \tilde{P}_{k\to l}(s)}{\partial s_i}\right)\cdot \left(1+\OO\left(\frac{1}{M}\right)\right).
\end{equation}
Thus asymptotically the difference in the distributions $\P^{\mathrm{mag}}_{(m)}(s,\cdot)$ and $\P^{\mathrm{mag}}_{(m)}(s^{i\to j},\cdot)$ is entirely determined by the values  $\frac{\partial \tilde{P}_{k\to l}(s)}{\partial s_j}-\frac{\partial \tilde{P}_{k\to l}(s)}{\partial s_i}$. Figure \ref{figcurvaturecoupling} illustrates the values of 
$\frac{\partial \tilde{P}_{k\to l}(s)}{\partial s_j}-\frac{\partial \tilde{P}_{k\to l}(s)}{\partial s_i}$ for 3 cases, 
\ifdefined \aop
\mbox{$s=C_1$, \hspace{1.5mm} $(i\to j)=(1\to 2)$;} \mbox{$s=C_1$, $(i\to j)=(2\to 3)$;} \mbox{$s=C_4$, $(i\to j)=(1\to 2)$.} 
\else
\[s=C_1, (i\to j)=(1\to 2);\quad s=C_1, (i\to j)=(2\to 3);\quad s=C_4, (i\to j)=(1\to 2).\]
\fi
The other centers and directions can be shown to be equivalent to one of these cases because of the symmetry of the problem.

We construct the coupling between the distributions $\P^{\mathrm{mag}}_{(m)}(s,\cdot)$ and $\P^{\mathrm{mag}}_{(m)}(s^{i\to j},\cdot)$ as follows. Firstly, for every $z\in \Lambda^{(m)}$, we couple $z$ with $z^{i\to j}$ with probability
\[\min(\P^{\mathrm{mag}}_{(m)}(s,z),\P^{\mathrm{mag}}_{(m)}(s^{i\to j},z^{i\to j})).\] This way we are left with probabilities of $\OO\left(\frac{1}{M}\right)$ from both distributions, determined by the coefficients $\frac{\partial \tilde{P}_{k\to l}(s)}{\partial s_j}-\frac{\partial \tilde{P}_{k\to l}(s)}{\partial s_i}$. These are illustrated on Figures \ref{curvfig2c}, \ref{curvfig3c} and \ref{curvfig1c}, with the numbers with black colour corresponds to the remaining probabilities of the distribution $\P^{\mathrm{mag}}_{(m)}(s,\cdot)$, while the red colour ones correspond to the remaining probabilities of the distribution $\P^{\mathrm{mag}}_{(m)}(s^{i\to j},\cdot)$. These are then matched together as illustrated by the arrows connecting them. Based on this matching, it is easy to see that for $s\in\{C_1,C_2,C_3,C_4\}\cap \Omega^{S}$, $\kappa(s,s^{i\to j})\ge \frac{0.025}{M}(1+\OO\left(\frac{1}{M}\right))$, and thus $\kappa(s,s^{i\to j})\ge \frac{0.02}{M}$ for $M$ larger than some absolute constant (it might happen that  $C_i\notin \Omega^{S}$ for some $1\le i\le 4$, but this is not an issue due to the continuity argument that follows). To extend this result to the neighbourhood $\Lambda^{(m)}$, we note that using \eqref{eqsecderbound}, one can show that the coefficients $\frac{\partial \tilde{P}_{k\to l}(s)}{\partial s_j}-\frac{\partial \tilde{P}_{k\to l}(s)}{\partial s_i}$ and $\frac{\partial \tilde{P}_{\circlearrowleft}(s)}{\partial s_j}-\frac{\partial \tilde{P}_{\circlearrowleft}(s)}{\partial s_i}$ cannot change by more than $48\cdot 4\rho$, and $288\cdot 4\rho$, respectively. Since the largest distance among the possible steps from $s$ and $s^{i\to j}$ is 3, this implies that the curvature  satisfies that for every $s\in \Lambda^{(m)}$,
\begin{align*}
\kappa(s,s_{i \to j})&\ge \frac{0.02}{M}-\frac{3(6\cdot 48\cdot 4\rho + 288\cdot 4\rho)}{M}\left(1+\OO\left(\frac{1}{M}\right)\right)\\
&=\frac{0.02}{M}-\frac{6912 \rho}{M}\left(1+\OO\left(\frac{1}{M}\right)\right),
\end{align*}
so with the choice $\rho=10^{-6}$ we have made, it follows that $\kappa(s,s_{i \to j})\ge \frac{0.01}{M}$ for $M$ larger than some absolute constant.

Finally, one also needs to bound $\kappa(s,s^{i\to j})$ at boundary of $\Lambda^{(m)}$. This can be done in the same way as the above three cases (the details are omitted due to space considerations), and we obtain that $\kappa(s,s_{i \to j})\ge \frac{0.01}{M}$ for $M$ larger than some absolute constant at every $s, s_{i \to j}\in \Lambda^{(m)}$. Therefore the advertised result holds.
\begin{figure}
  \centering
  \subcaptionbox{$\left(\frac{2}{3},\frac{1}{6},\frac{1}{6}\right)$ direction $1\to 2$ \label{curvfig2}}{\includegraphics[height=3.5cm, bb=0 -1 246 154]{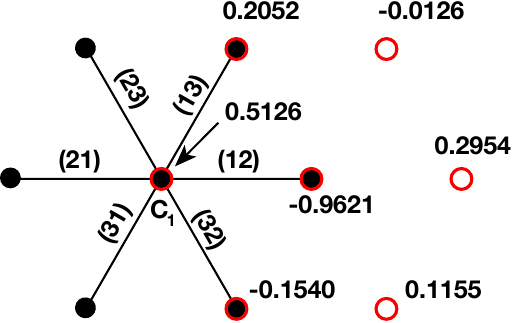}}\hspace{2em}%
  \subcaptionbox{$\left(\frac{2}{3},\frac{1}{6},\frac{1}{6}\right)$ direction $1\to 2$ \label{curvfig2c}}{\includegraphics[height=4cm, bb=0 -1 246 189]{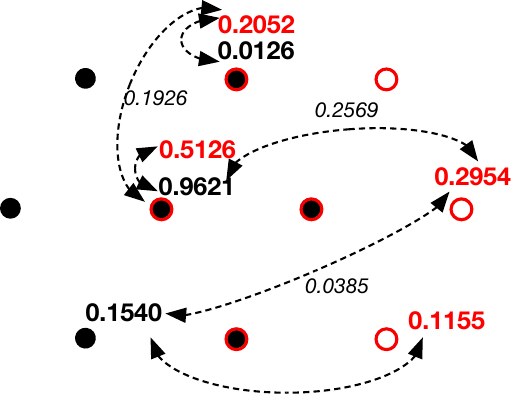}}\\ \vspace{2mm}%
  \subcaptionbox{$\left(\frac{2}{3},\frac{1}{6},\frac{1}{6}\right)$ direction $2\to 3$ \label{curvfig3}}{\includegraphics[height=5.5cm, bb=0 -1 198 212]{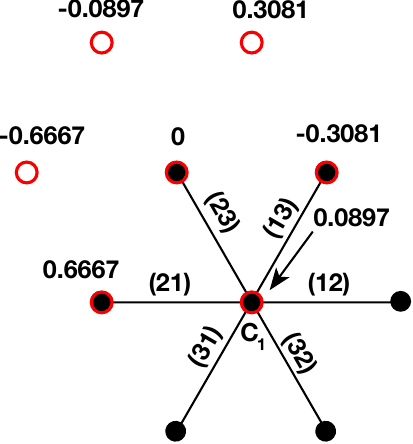}}\hspace{2em}%
  \subcaptionbox{$\left(\frac{2}{3},\frac{1}{6},\frac{1}{6}\right)$ direction $2\to 3$ \label{curvfig3c}}{\includegraphics[height=5.5cm, bb=0 -1 221 217]{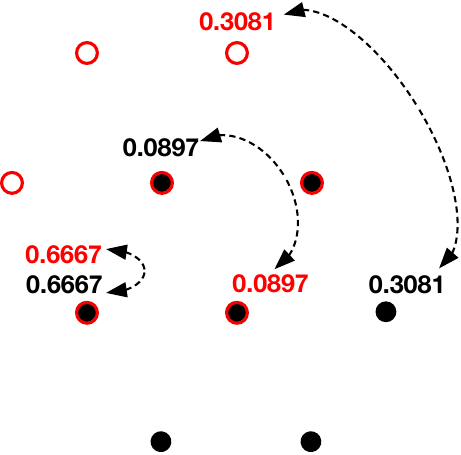}}\\ \vspace{2mm}%
  \subcaptionbox{$\left(\frac{1}{3},\frac{1}{3},\frac{1}{3}\right)$ direction $1\to 2$ \label{curvfig1}}{\includegraphics[height=3.5cm, bb=0 -1 254 152]{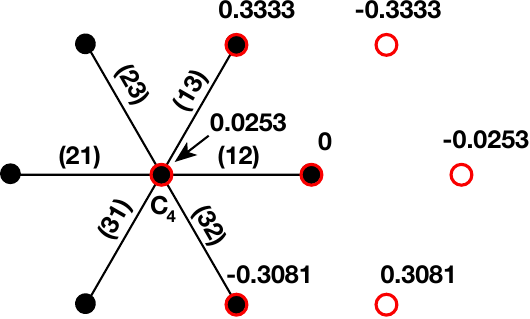}}\hspace{2em}%
  \subcaptionbox{$\left(\frac{1}{3},\frac{1}{3},\frac{1}{3}\right)$ direction $1\to 2$ \label{curvfig1c}}{\includegraphics[height=3.5cm, bb=0 -1 228 165]{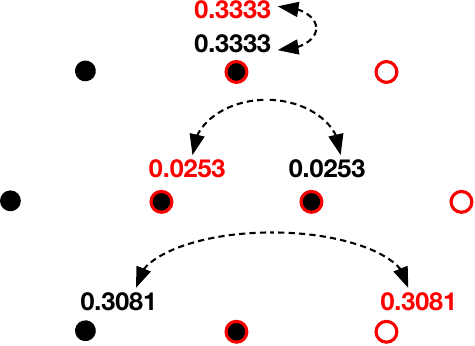}}%
  \caption{Changes in distribution when moving from a central point}
  \label{figcurvaturecoupling}
\end{figure}
\end{proof}

\subsection{Proof of Proposition \ref{keyprop2}}\label{secproofpropgrowthwithinmodePotts}
The proof is based on two preliminary lemmas. The first one analyses the asymptotic version of the log-likelihood of the ratio of spins. As previously, we let $T:=\{s\in [0,1]^3: s_1+s_2+s_3=1\}$.
\begin{lemma}[Maximum of the asymptotic log-likelihood]\label{lemmaL}
Let $\LL : T\to\R$ be defined as 
\begin{equation}\label{eqLdef}
\LL(s):=\sum_{i\le 3}(\tbeta_c s_i^2 - s_i\log(s_i)).
\end{equation}
Then the function $\LL$ takes its maximum on $T$ on the set $\{C_1,\ldots,C_4\}$, and it satisfies that for any $s\in T$,
\[\LL(s)\le \LL(C_1)-c d_C(s)^2, \]
where $d_C(s)$ is defined as in \eqref{dCsdefeq}, and $c>0$ is an absolute constant.
\end{lemma}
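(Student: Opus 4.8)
The plan is to treat this as a constrained optimisation of a symmetric function on the $2$-simplex $T=\{s\in[0,1]^3:\ s_1+s_2+s_3=1\}$ and then perform a local second-order analysis at the maximisers. First I would note that $\LL$ extends continuously to all of $T$ (the term $-\tbetac\,s_i\log s_i$ tends to $0$ as $s_i\downarrow 0$), so a maximiser exists on this compact set. On the interior, Lagrange multipliers for the constraint $\sum_i s_i=1$ show that any interior critical point satisfies $\psi(s_1)=\psi(s_2)=\psi(s_3)$ for a common value $\lambda$, where $\psi$ is the derivative of the single-variable function $t\mapsto\tbetac(t^3-t\log t)$ appearing in $\LL$. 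A direct computation shows $\psi$ is strictly convex on $(0,1)$, so the equation $\psi(t)=\lambda$ has at most two solutions; hence at an interior critical point the coordinates take at most two distinct values. Up to permutation this reduces the candidate set to the barycentre $C_4=(\tfrac13,\tfrac13,\tfrac13)$ and the one-parameter ``spoke'' family $s=(a,a,1-2a)$, $a\in(0,\tfrac12)$; the boundary faces $\{s_i=0\}$ of $T$ must be examined separately.

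Restricting $\LL$ to a spoke gives an explicit smooth function $h(a):=\LL(a,a,1-2a)$ of one variable on $[0,\tfrac12]$; I would solve $h'(a)=0$, identify the local maxima, and compare the resulting values with $h(\tfrac13)=\LL(C_4)$, with the endpoint values $h(0),h(\tfrac12)$, and with the values of $\LL$ on the edges of $T$. Here the precise value $\tbetac=2\log 2$ is essential: it is exactly the value for which the spoke maximiser at $a=\tfrac16$ (which, up to relabelling, is the point $C_1$) and the barycentre $C_4$ have equal $\LL$-value, and for which both dominate all boundary values. Combined with permutation symmetry, this yields that $\LL$ attains its maximum on $T$ precisely on $\{C_1,C_2,C_3,C_4\}$ and that $\LL(C_1)=\LL(C_2)=\LL(C_3)=\LL(C_4)$.

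For the quadratic bound I would use that $\LL$ is $C^2$ near each $C_i$ and compute its Hessian restricted to the tangent plane $\{v\in\R^3:\ v_1+v_2+v_3=0\}$ of $T$, verifying that for $\tbetac=2\log 2$ this restricted Hessian is \emph{strictly} negative definite at each $C_i$; this is the step that uses that $\tbetac$ is strictly below the threshold at which $C_4$ would cease to be a strict local maximum. Hence there are $\delta>0$ and $c_1>0$ with $\LL(s)\le\LL(C_i)-c_1\,d(s,C_i)^2$ whenever $d(s,C_i)\le\delta$; since $d_C(s)=d(s,C_i)$ for $s\in T_i$ and the four centres lie in distinct subtriangles $T_i$, this gives the claimed inequality with constant $c_1$ on a $\delta$-neighbourhood of $\{C_1,\dots,C_4\}$. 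On the complementary compact set $\{s\in T:\ d_C(s)\ge\delta\}$, the previous paragraph gives $\LL(C_1)-\LL(s)\ge\epsilon_0>0$, while $d_C(\cdot)^2$ is bounded on $T$; taking $c:=\min\{c_1,\ \epsilon_0/\sup_{T}d_C^2\}$ yields $\LL(s)\le\LL(C_1)-c\,d_C(s)^2$ for every $s\in T$.

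The genuinely delicate parts are (i) the one-dimensional spoke analysis --- ruling out spurious local maxima of $h$ and proving the equality of $\LL$ at $C_1$ and $C_4$, both of which hinge on the exact value $\tbetac=2\log 2$ --- and (ii) checking that the restricted Hessian at each $C_i$ is strictly rather than merely weakly negative definite, since a degenerate direction there would destroy the $d_C(s)^2$ lower bound. Both reduce to explicit finite computations, but they require some care with the simplex constraint.
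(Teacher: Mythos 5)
Your proposal is correct in outline but follows a genuinely different route from the paper. The paper's proof passes to polar coordinates $(r,\alpha)$ around the nearest centre $C_I$: for each fixed radius $r=d_C(s)$ it identifies the maximising angle $\alpha$, which reduces everything to a one-dimensional radial function along finitely many worst-case directions and delivers the $-c\,d_C(s)^2$ decay directly from that radial analysis. You instead classify the interior critical points by Lagrange multipliers together with the strict convexity of the marginal derivative $\psi$ (so the coordinates take at most two distinct values, cutting the candidates down to $C_4$ and the spokes $(a,a,1-2a)$), perform the one-dimensional comparison along a spoke, and then derive the quadratic bound from strict negative definiteness of the Hessians restricted to the tangent plane plus a compactness argument away from the centres. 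Your route is the classical treatment of the mean-field Potts free energy and is arguably easier to make fully rigorous; the paper's route avoids the local/global splitting but leaves the angular maximisation claims entirely unproved. The nondegeneracy you rely on does hold: on the tangent plane, in the coordinates $u=v_2+v_3$, $w=v_2-v_3$, the Hessian at $C_1$ has coefficients $3\tbetac-\tfrac92<0$ and $\tbetac-3<0$, and at $C_4$ it is $(2\tbetac-3)$ times the identity with $2\tbetac-3=4\log 2-3<0$, so your local bound $\LL(s)\le\LL(C_i)-c_1 d(s,C_i)^2\le\LL(C_1)-c_1 d_C(s)^2$ is justified.

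One caveat you should resolve explicitly: the displayed formula \eqref{eqLdef} contains a typo. As the lemma is actually used (compare the leading term of $L_M$ in \eqref{LMdef}), the intended function is $\LL(s)=\sum_{i\le 3}(\tbetac s_i^2-s_i\log s_i)$, not $\tbetac\sum_{i\le 3}(s_i^3-s_i\log s_i)$. You quote the literal cubic formula but then assert facts — that the spoke maximiser at $a=\tfrac16$ is a critical point and that it has the same $\LL$-value as $C_4$ precisely when $\tbetac=2\log 2$ — which are true only for the quadratic version; for the cubic formula $C_1$ is not even a critical point and the stated conclusion fails. Your argument goes through verbatim for the intended quadratic $\LL$, since $t\mapsto 2\tbetac t-\log t-1$ is again strictly convex on $(0,1)$, but as written the first and second paragraphs of your proof are internally inconsistent.
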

\begin{proof}
Assume that $C_I$ is the closest point to $s$ in $\{C_1,\ldots,C_4\}$. By changing to polar coordinates, we can rewrite $s$ as
\[s=C_I-r\cdot \sqrt{\frac{2}{3}}\left(\cos\left(\alpha-\frac{\pi}{6}\right),\cos\left(\alpha-\frac{5\pi}{6}\right),\cos\left(\alpha-\frac{3\pi}{2}\right)\right),\]
where $r=d_C(s)$, and $\alpha$ is the angle of $(C_i,s)$ with the horizontal vector $((1,0,0),(0,1,0))$ (in counterclockwise direction). By writing $\LL$ as a function of $r$ and $\alpha$, one can show that for $0\le r\le \frac{1}{2\sqrt{6}}$, the maximum is taken at $\alpha=\frac{\pi}{6}$ for $C_1$, $\alpha=\frac{5\pi}{6}$ for $C_2$, $\alpha=\frac{3\pi}{2}$ for $C_3$, and 
$\alpha\in \{\frac{\pi}{2},\frac{7\pi}{6},\frac{11\pi}{6}\}$ for $C_4$. Moreover, it can also be shown that for $\frac{1}{2\sqrt{6}}< r\le \frac{1}{\sqrt{6}}$, the maximum in $\alpha$ is taken at a point falling on the edges of the triangle $(0,\frac{1}{2},\frac{1}{2}), (\frac{1}{2},0,\frac{1}{2}), (\frac{1}{2},\frac{1}{2}, 0)$. The claim of the lemma is now easy to verify by substituting these values of $\alpha$ into $\LL$.
\end{proof}

The following lemma shows an  error bound on the convergence of the Riemann-sums of the integral $Z_m:=\int_{t=-\infty}^{\infty}\exp(-t^2) t^{m} \mathrm{d}t$. The bound is somewhat surprising since it implies faster than polynomial convergence. This is due to cancellations in the sum of differences between the integral and the Riemann-sum.
\begin{lemma}[Convergence of Riemman-sum to Gaussian integral]\label{lemmaRiemannapprox}
For $R>0, \delta>0$, let 
\begin{equation}\label{eqPsimrdeltadef}\Psi_{m}(R,\delta):=\sum_{k\in \Z}\exp(-(k+\delta)^2 R^2) (k+\delta)^m R^{m+1}.
\end{equation}
Then 
\begin{equation}\label{eqPsilimit}
\lim_{R\searrow 0 }\Psi_{m}(R,\delta)=Z_m.\end{equation}
Moreover, for any $l, m\in \N$, there exists some universal constant $c_{m,l}<\infty$ such that for every $0<R\le 1$,
\begin{equation}\label{eqPsibound}
\left|\Psi_{m}(R,\delta)-Z_m\right|\le c_{m,l} R^l.
\end{equation}
\end{lemma}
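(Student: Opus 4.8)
The plan is to recognise $\Psi_{m}(R,\delta)$ as a Riemann sum of a Schwartz function and to apply the Poisson summation formula. Set $g(u):=u^m e^{-u^2}$, so that
\[
\Psi_{m}(R,\delta)=R\sum_{k\in\Z} g\bigl(R(k+\delta)\bigr).
\]
Since $g$ and its Fourier transform $\widehat{g}(\xi):=\int_{\R} g(u)\,e^{-2\pi i\xi u}\,du$ are both Schwartz, Poisson summation applies to $x\mapsto R\,g(R(x+\delta))$; after the substitution $u=R(x+\delta)$ this gives
\[
\Psi_{m}(R,\delta)=\sum_{n\in\Z} e^{2\pi i n\delta}\,\widehat{g}\!\left(\tfrac{n}{R}\right).
\]
The $n=0$ term is exactly $\widehat{g}(0)=\int_{\R} u^m e^{-u^2}\,du=Z_m$, so it remains only to control $\sum_{n\neq 0}|\widehat{g}(n/R)|$, and in particular \eqref{eqPsilimit} and \eqref{eqPsibound} will both drop out of a single tail estimate.

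Next I would record the decay of $\widehat{g}$. Starting from $\widehat{e^{-u^2}}(\xi)=\sqrt{\pi}\,e^{-\pi^2\xi^2}$ and the identity $\widehat{u\,f(u)}(\xi)=\tfrac{i}{2\pi}\tfrac{d}{d\xi}\widehat{f}(\xi)$, one obtains
\[
\widehat{g}(\xi)=\Bigl(\tfrac{i}{2\pi}\Bigr)^m\sqrt{\pi}\,\frac{d^m}{d\xi^m}e^{-\pi^2\xi^2}=\Bigl(\tfrac{i}{2\pi}\Bigr)^m\sqrt{\pi}\,q_m(\xi)\,e^{-\pi^2\xi^2},
\]
where $q_m$ is a polynomial of degree $m$ (a rescaled Hermite polynomial). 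Hence there is a constant $A_m$ with $|\widehat{g}(\xi)|\le A_m(1+|\xi|^m)e^{-\pi^2\xi^2}$ for all $\xi\in\R$.

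Finally I would sum up. Using the bound just stated,
\[
\bigl|\Psi_{m}(R,\delta)-Z_m\bigr|\le\sum_{n\neq 0}\bigl|\widehat{g}(n/R)\bigr|\le 2A_m\sum_{n\ge 1}\bigl(1+(n/R)^m\bigr)e^{-\pi^2 n^2/R^2}.
\]
For $0<R\le 1$ and $n\ge 1$ one has $e^{-\pi^2 n^2/R^2}\le e^{-\pi^2/R^2}e^{-\pi^2(n^2-1)}$ and $1+(n/R)^m\le R^{-m}(1+n^m)$, so the right-hand side is at most $B_m\,R^{-m}e^{-\pi^2/R^2}$, where $B_m:=2A_m\sum_{n\ge 1}(1+n^m)e^{-\pi^2(n^2-1)}<\infty$. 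Letting $R\searrow 0$ yields \eqref{eqPsilimit}. For \eqref{eqPsibound}, fix $l\in\N$: the map $R\mapsto R^{-m-l}e^{-\pi^2/R^2}$ is continuous on $(0,1]$ and tends to $0$ as $R\searrow 0$, hence is bounded there by some $K_{m,l}$, which gives $|\Psi_{m}(R,\delta)-Z_m|\le B_m K_{m,l}\,R^l=:c_{m,l}R^l$.

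The only point that genuinely needs care is the justification of Poisson summation with an arbitrary shift $\delta$ together with a consistent Fourier-transform normalisation; since $g$ is Schwartz this is entirely standard, and the resulting bound is automatically uniform in $\delta$ because $|e^{2\pi i n\delta}|=1$. A more elementary route via the Euler–Maclaurin formula is also available, but it is messier here: one must check that every boundary term vanishes (as $g^{(j)}(\pm\infty)=0$) and bound the periodic-Bernoulli remainder, which in the end merely reproduces the super-polynomial decay that Poisson summation displays transparently.
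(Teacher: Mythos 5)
Your proposal is correct and follows essentially the same route as the paper: apply the Poisson summation formula to the shifted, rescaled Gaussian-times-monomial, identify the $n=0$ Fourier mode with $Z_m$, and bound the nonzero modes by $C(m)\,(1+|\xi|^m)\,e^{-\text{const}\cdot \xi^2/R^2}$, whose sum over $n\neq 0$ decays super-polynomially in $R$. The only cosmetic difference is that you compute $\widehat{g}$ via the derivative identity (Hermite polynomials) while the paper expresses it as a Gaussian expectation; both yield the same decay estimate and the same conclusion.
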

\begin{proof}
The Poisson summation formula (see \cite{GrafakosFourier}) yields that 
\[\Psi_{m}(R,\delta)=
R^{m+1} \, \sum_{k \in \Z} \widehat{G}_{R,\delta}(k),
\]
where $\widehat{G}_{R,\delta}$ is the Fourier transform of 
$G_{R,\delta}(x) \defby \exp\BK{ -(x+\delta)^2 \, R^2 }  \, (x+\delta)^m$. Algebra readily yields that
\begin{align*}
\widehat{G}_{R,\delta}(\xi)
&= 
\int_{\R} G_{R,\delta}(x) \, e^{2i \, \pi x \, \, \xi}
\\
&=
\sqrt{\pi / R^2} \,
e^{2 i \pi \, \delta \, \xi} \, 
e^{-\pi^2 \, \xi^2 / R^2} \, 
\E{ \BK{ Z / \sqrt{2R^2} + i \pi \, \xi / R^2 }^m },
\end{align*}
where $Z \sim \mathcal{N}(0,1)$ is a standard Gaussian random variable.
Let $\xi\in \Z\setminus \{0\}$, then using the fact that $e^{-(\pi^2-\pi) \, \xi^2 / R^2}\cdot R^{-m}$ can be bounded uniformly for $R>0$ by a constant only depending on $m$, we have
\begin{align*}
R^{m+1}\left| \widehat{G}_{R,\delta}(\xi) \right|
\leq
C(m) \, e^{-\pi \, \xi^2 / R^2 } \, (1+|\xi|^m)
\end{align*}
where $C(m)$ is a constant that only depends on the exponent $m$. Since we know that
$R^{m+1} \, \widehat{G}_{R,\delta}(0) = Z_m$, it follows that
\begin{align*}
\left| \Psi_{m}(R,\delta)-Z_m \right|
&= R^{m+1} \, \left| \sum_{k \neq 0} \widehat{G}_{R,\delta}(k) \right|
\leq 2 \, C(m) \, \sum_{k \geq 1} e^{-\pi \, k^2 / R^2} \, (1+k^m).
\end{align*}
For a fixed value of $m$ and as $R \to 0$ the function $R \mapsto \sum_{k \geq 1} e^{-\pi \, k^2 / R^2} (1+k^m)$ is equivalent to $2 \, e^{-\pi/R^2}$, hence the conclusion.
\end{proof}
Now we are ready to prove the main result of this section.
\begin{proof}[Proof of Proposition \ref{keyprop2}]
By a non-asymptotic version of Stirling's formula (see pages 251-253 of \cite{Courseinmodernanalysis}) it follows that for any integer $l\ge 1$,
\begin{equation}\label{eqkfactorapprox}
-\frac{1}{360 l^3}\le \log( l!) - \left[l \log(l) - l +\frac{1}{2}\log(2\pi l)+\frac{1}{12l}\right]\le 0.
\end{equation}
From the definition of our model, we have
\[\log \mmu_{\tbeta_c,M}^{\mathrm{Potts}}(s_1,s_2,s_3)=\const(M) + \tbeta_c M (s_1^2+s_2^2+s_3^2)+\log\left(\frac{M!}{(Ms_1)! (Ms_2)! (Ms_3)!} \right).\]
A simple argument shows that $\mmu_{\tbeta_c,M}^{\mathrm{Potts}}(s_i=0)$ is exponentially small in $M$ for $i=1, 2, 3$. Therefore these terms do not affect the claim of the proposition, and thus they can be omitted.
Using \eqref{eqkfactorapprox}, we have that for every $s\in \Omega^{S}$ satisfying that $s_i\ne 0$ for $i=1,2,3$, 
\begin{align*}
\log \mmu_{\tbeta_c,M}^{\mathrm{Potts}}(s_1,s_2,s_3)&=\const(M) +  M \sum_{i=1}^{3}(\tbeta_c s_i^2 - s_i\log(s_i))\\
&- \frac{1}{2}\sum_{i=1}^{3}\log(s_i)-\frac{1}{12M} \sum_{i=1}^{3}\frac{1}{s_i} + E_0 -E_1-E_2-E_3,
\end{align*}
where $0\le E_0\le \frac{1}{360M^3}$, and $0\le E_i\le \frac{1}{360s_i^3 M^3}$ for $i=1,2,3$.
It is easy to see that for $s\in \Omega^{S}, s_i\ne 0$ for $i=1,2,3$, we have
\[\left|- \frac{1}{2}\sum_{i=1}^{3}\log(s_i)-\frac{1}{12M} \sum_{i=1}^{3}\frac{1}{s_i} + E_0 -E_1-E_2-E_3\right|\le \frac{1}{3}+\frac{3}{2}\log(M).\]
Using this and Lemma \ref{lemmaL} it follows that 
\begin{equation}\label{noncentralpartsboundineq}\mmu_{\tbeta_c,M}^{\mathrm{Potts}}\left(\{\sigma\in \Omega: d_C(s(\sigma))>\frac{\log(M)}{\sqrt{M}}\right)\le \frac{1}{M^2}
\end{equation}
for $M$ larger than some absolute constant. This can be neglected since it is $\OO\left(\frac{\log(M)^{3/2}}{M^{3/2}}\right)$. Therefore we will focus on the regions that are less than $\frac{\log(M)}{\sqrt{M}}$ distance away from one of the centers. Let
\begin{equation}\label{LMdef}L_{M}(s):=M \sum_{i=1}^{3}(\tbeta_c s_i^2 - s_i\log(s_i))
- \frac{1}{2}\sum_{i=1}^{3}\log(s_i)-\frac{1}{12M} \sum_{i=1}^{3}\frac{1}{s_i}, \end{equation}
then for $d_C(s)\le \frac{\log(M)}{\sqrt{M}}$, we have
$\log \mmu_{\tbeta_c,M}^{\mathrm{Potts}}(s_1,s_2,s_3)=\const(M)+ L_M(s)+\OO(\frac{1}{M^3})$. The error term can be neglected as previously, and thus we only need to analyse the function $L_M(s)$. 
First, we will consider $C_4$. By substituting $s_3=1-s_1-s_2$ and using Taylor's expansion with remainder term of $L_M(s)$ around $C_4$ in $s_1$ and $s_2$, we obtain that for $d(s,C_4)\le \frac{\log(M)}{\sqrt{M}}$,
\begin{align*}
&L_M(s)=M\left(\frac{\tbeta_c}{3}+\log(3)\right)+\frac{3}{2}\log(3)-\frac{3}{4M}\\
&-\left((3-2\tbeta_c)M-\frac{9}{2}\right)\cdot \left(\left(s_1-\frac{1}{3}\right)^2+\left(s_2-\frac{1}{3}\right)^2+\left(s_1-\frac{1}{3}\right)\left(s_2-\frac{1}{3}\right)\right)\\
&-\frac{9}{2}M \left(\left(s_1-\frac{1}{3}\right)^2\left(s_2-\frac{1}{3}\right)+\left(s_1-\frac{1}{3}\right)\left(s_2-\frac{1}{3}\right)^2\right)\\
&-\frac{9}{2}M \Bigg[\left(s_1-\frac{1}{3}\right)^4+\left(s_2-\frac{1}{3}\right)^4 + 2 \left(s_1-\frac{1}{3}\right)^3\left(s_2-\frac{1}{3}\right)\\
&+2\left(s_1-\frac{1}{3}\right)\left(s_2-\frac{1}{3}\right)^3+3\left(s_1-\frac{1}{3}\right)^2\left(s_2-\frac{1}{3}\right)^2\Bigg]+\OO\left(\frac{\log(M)^5}{M^{\frac{3}{2}}}\right).
\end{align*}
By change of variables $\ts_1:=s_1+s_2-\frac{2}{3}$, $\ts_2:=s_1-s_2$, we have
\begin{align*}
&\exp(L_M(s))=\exp\left(M\left(\frac{\tbeta_c}{3}+\log(3)\right)+\frac{3}{2}\log(3)-\frac{3}{4M}\right)
\cdot \exp\left(-M(3-2\tbeta_c)\left[\frac{3}{4}\ts_1^2+\frac{1}{4}\ts_2^2\right]\right)\cdot\\
&\cdot \Bigg[1+\frac{9}{2}\left[\frac{3}{4}\ts_1^2+\frac{1}{4}\ts_2^2\right]+\frac{9}{8}M[\ts_1\ts_2^2-\ts_1^3]-\frac{9}{32}M(9\ts_1^4+3\ts_2^4+4\ts_1^2\ts_2^2)\\
&+\frac{81}{32}M^2(\ts_1^6+4\ts_2^4\ts_1^2-2\ts_1^4\ts_2^2)+
\OO\left(\frac{\log(M)^5}{M^{\frac{3}{2}}}\right)\Bigg].
\end{align*}
Now it is easy to see that
\begin{align*}&\sum_{s: d(s,C_4)\le \frac{\log(M)}{\sqrt{M}}}\exp(L_M(s))=
\sum_{\ts_1\in \{-\frac{2}{3}+\frac{k}{M}: k\in \Z\}}\sum_{\ts_2\in \{\frac{l}{M}: l\in \Z, (\ts_1+\frac{2}{3})M-l \text{ even}\}}\\
&\exp\left(M\left(\frac{\tbeta_c}{3}+\log(3)\right)+\frac{3}{2}\log(3)-\frac{3}{4M}\right)\cdot 
\exp\left(-M(3-2\tbeta_c)\left[\frac{3}{4}\ts_1^2+\frac{1}{4}\ts_2^2\right]\right)\\
&\cdot \Bigg[1+\frac{9}{2}\left[\frac{3}{4}\ts_1^2+\frac{1}{4}\ts_2^2\right]+\frac{9}{8}M[\ts_1\ts_2^2-\ts_1^3]-\frac{9}{32}M(9\ts_1^4+3\ts_2^4+4\ts_1^2\ts_2^2)
\\&+
\frac{81}{32}M^2(\ts_1^6+4\ts_2^4\ts_1^2-2\ts_1^4\ts_2^2)\Bigg]\cdot \left(1+
\OO\left(\frac{\log(M)^5}{M^{\frac{3}{2}}}\right)\right).
\end{align*}
We can decompose each term of the above sum as a product of a sum in $\ts_1$ and another sum in $\ts_2$. These can be then approximated as integrals by Lemma \ref{lemmaRiemannapprox}, leading to
\begin{align*}
&\sum_{s: d(s,C_4)\le \frac{\log(M)}{\sqrt{M}}}\exp(L_M(s))=
\exp\left(M\left(\frac{\tbeta_c}{3}+\log(3)\right)+\frac{3}{2}\log(3)-\frac{3}{4M}\right)\\
&\cdot \Bigg( \frac{M\cdot 2\pi/\sqrt{3}}{3-4\log(2)}
+ \frac{\sqrt{3}\pi (99-64\log(2)+96\log(2)^2)}{2(3-4\log(2))^4}\Bigg) \left(1+
\OO\left(\frac{\log(M)^5}{M^{\frac{3}{2}}}\right)\right)\\
&=\exp\left(M\left(\frac{\tbeta_c}{3}+\log(3)\right)\right)\cdot M\cdot \left(c_1 +\frac{c_2}{M}+\OO\left(\frac{\log(M)^5}{M^{\frac{3}{2}}}\right)\right),
\end{align*}
for some absolute constants $c_1, c_2\in \R$, $c_1>0$.

By a similar argument, one can show that for $1\le i\le 3$, we also have
\begin{align*}
&\sum_{s: d(s,C_i)\le \frac{\log(M)}{\sqrt{M}}}\exp(L_M(s))\\
&=\exp\left(M\left(\frac{\tbeta_c}{3}+\log(3)\right)\right)\cdot M\cdot \left(c_1' +\frac{c_2'}{M}+\OO\left(\frac{\log(M)^5}{M^{\frac{3}{2}}}\right)\right),
\end{align*}
for some absolute constants $c_1', c_2'\in \R$, $c_1'>0$. By taking the ratio of these sums for the case when $M=j$, using \eqref{noncentralpartsboundineq}, we obtain that for $j\ge 1$, $1\le i\le 4$, the probability of the modes equals
\begin{equation}\label{eqFjprob}\mmu_{j}\left(F_j^{(i)}\right)=w_i + \frac{w_i'}{j}+ \OO\left(\frac{\log(j)^5}{j^{\frac{3}{2}}}\right),\end{equation}
for some absolute constants $w_i, w_i'\in \R$, $w_i>0$. 

For $j=0$ and $j=1$, we only have one mode, so $B_{0,1}=B_{1,2}=1$. For $j\ge 2$, the growth-within-mode constants $B_{j,j+1}$ can be written as
\[B_{j,j+1}=\max_{i\le 4}\frac{\mmu_{j+1}\left(F_j^{(i)}\right)}{\mmu_{j}\left(F_j^{(i)}\right)}\le \max_{i\le 4}\frac{\mmu_{j+1}\left(F_j^{(i)}\right)}{\mmu_{j+1}\left(F_{j+1}^{(i)}\right)}\cdot \max_{i\le 4}\frac{\mmu_{j+1}\left(F_{j+1}^{(i)}\right)}{\mmu_{j}\left(F_j^{(i)}\right)}.\]
Since $F_{j+1}^{(i)}$ and $F_j^{(i)}$ only differ on the borders between the modes, by Lemma \ref{lemmaL} is follows that $\left|\max_{i\le 4}\frac{\mmu_{j+1}\left(F_j^{(i)}\right)}{\mmu_{j+1}\left(F_{j+1}^{(i)}\right)} - 1\right|$ decreases exponentially fast in $j$. Moreover, from \eqref{eqFjprob}, it follows that
$\max_{i\le 4}\frac{\mmu_{j+1}\left(F_{j+1}^{(i)}\right)}{\mmu_{j}\left(F_j^{(i)}\right)}=1+\OO\left(\frac{\log(j)^5}{j^{\frac{3}{2}}}\right)$. Thus for any $j\ge 2$,
\[B_{j,j+1}=1+\OO\left(\frac{\log(j)^5}{j^{\frac{3}{2}}}\right),\]
and the claim of the proposition follows.
\end{proof}

\subsection{Proof of Lemma \ref{dtvLambdailemma}}\label{secproofdtvLambdailemma}
For any two distributions $\bm{\eta}, \bm{\nu}$ on some finite set $W$, we have 
\[\dtv(\bm{\eta},\bm{\nu})=\frac{1}{2}\sum_{x\in W}|\bm{\eta}(x)-\bm{\nu}(x)|=\sum_{x\in W}(\bm{\eta}(x)-\bm{\nu}(x))_+.\]
Based on the definition of $\mmu_{M|\tilde{\Lambda}^{(i)}}$ and $\mmu_{M}^{(i)}$, it follows that
$\mmu_{M}^{(i)}(x)<\mmu_{M|\tilde{\Lambda}^{(i)}}(x)$ for every $x\in \tilde{\Lambda}^{(i)}$, and thus 
\[\dtv\left(\mmu_{M}^{(i)},\mmu_{M|\tilde{\Lambda}^{(i)}}\right)= \mmu_{M}^{(i)}\left(F_M^{(i)}\setminus \tilde{\Lambda}^{(i)}\right)=
\frac{\mmu_{M}\left(F_M^{(i)}\setminus \tilde{\Lambda}^{(i)}\right)}{\mmu_{M}\left(F_M^{(i)}\right)}.\]
The claims of the Lemma now follow from \eqref{noncentralpartsboundineq} and \eqref{eqFjprob}.


\end{document}